\newtheorem{thm}{Theorem}[section]
\newtheorem{lem}[thm]{Lemma}
\newtheorem{prop}[thm]{Proposition}
\theoremstyle{definition}
\newtheorem{defn}[thm]{Definition}
\theoremstyle{remark}
\newtheorem{rem}[thm]{Remark}
\DeclareMathOperator{\rk}{rank}
\DeclareMathOperator{\cov}{cov} 
\DeclareMathOperator{\tw}{tw} 
\newcommand{\cC}{\mathcal{C}}
\newcommand{\E}{\mathbb{E}}
\newcommand{\cE}{\mathcal{E}}
\newcommand{\cS}{\mathcal{S}}
\newcommand{\N}{\mathbb{N}}
\newcommand{\cO}{\mathcal{O}}
\renewcommand{\P}{\mathbb{P}}
\newcommand{\R}{\mathbb{R}}
\renewcommand{\S}{\mathbb{S}}
\newcommand{\cT}{\mathcal{T}}
\newcommand{\cX}{\mathcal{X}}
\newcommand{\indic}{{1\!\!1}}
\newcommand{\bs}{\boldsymbol}
\newcommand{\cip}{\mbox{\,$\perp\!\!\!\perp$\,}}
\newcommand{\indep}{\cip}
\newcommand{\sn}{{\rm sn}}
\title[]{Property testing in graphical models: testing small separation numbers}
\author[]{Luc Devroye}
\address{}
\email{}
\thanks{}
\author[]{G\'{a}bor Lugosi}
\address{}
\email{}
\thanks{}
\author[]{Piotr Zwiernik}
\address{}
\email{}
\thanks{}
\keywords{dd}
\subjclass[2010]{60E15, 62H99, 15B48}
\date{\today}                                            
\begin{document}
\begin{abstract}
  In many statistical applications, the dimension is too large to
  handle for standard high-dimensional machine learning
  procedures. This is particularly true for graphical models, where
  the interpretation of a large graph is difficult and learning its
  structure is often computationally impossible either because the
  underlying graph is not sufficiently sparse or the number of
  vertices is too large. To address this issue, we develop a procedure
  to test a property of a graph underlying a graphical model that
  requires only a subquadratic number of correlation queries (i.e., we
  require that the algorithm only can access a tiny fraction of the
  covariance matrix). This provides a conceptually simple test to
  determine whether the underlying graph is a tree or, more generally,
  if it has a small separation number, a quantity closely related to
  the treewidth of the graph.
The proposed method is a divide-and-conquer algorithm that can be applied to quite general 
graphical models.
\end{abstract}

\maketitle


\section{Introduction}

Graphical models have gained widespread popularity in the field of
statistics as a powerful tool for modeling complex multivariate data
structures \cite{lau96}. The graphical representation allows for a
clear visualization of the underlying dependencies between the
components of a random vector $X=(X_1,\ldots,X_n)$, where each node in
the graph corresponds to a component and edges between nodes
represent direct statistical dependence between the corresponding random variables. This makes it easier to
reason about and interpret the relationships between variables in the
model.

Among different types of graphical models, Gaussian graphical models are particularly popular and useful due to their simplicity and mathematical tractability. Let $G=([n],E)$ be a graph with $n$ nodes, $[n]=\{1,\ldots,n\}$, and edge set $E$. Denote by $\S^n_+$ the set of symmetric positive definite $n\times n$ matrices. We consider the set
\begin{equation}\label{eq:defMG}
M(G)\;=\;\{\Sigma\in \S^n_+: (\Sigma^{-1})_{ij}=0 \mbox{ if }i\neq j\mbox{ and }ij\notin E\}.	
\end{equation}
In Gaussian graphical models it is assumed that the covariance matrix $\Sigma=\cov(X)$ lies in $M(G)$  for some graph $G$. If $X$ has Gaussian distribution then the relationship
$$
(\Sigma^{-1})_{ij}=0\quad\Longleftrightarrow\quad  X_i\indep X_j|X_{[n]\setminus \{i,j\}}
$$
makes it easy to interpret the graph. 

The sparsity and clear statistical interpretation offered by graphical models in high-dimensional scenarios \cite{wainwright2008graphical, lau96} have made them widely applied in areas such as genetics, neuroimaging, and finance, where the underlying graph structure can provide valuable insights into complex interactions and relationships between variables. 

In modern applications, the high dimensionality of datasets often makes it difficult to interpret the underlying graph structure. In such cases, researchers may need to focus on specific graph statistics that capture the most relevant aspects of the data. Depending on the application, different graph measures may be of interest, such as the diameter of the graph, measures of connectivity, high-degree vertices, community structures, and so on; see \cite{neykov2019combinatorial} for a discussion. This shift in focus from learning and testing the underlying graph structure to learning and testing more high-level combinatorial features of the graph motivates also our article.

Another motivation for our work is that, as pointed out in \cite{lugosi2021learning}, the computational cost of order $n^2$ becomes a major obstacle in a growing number of applications where the number of variables $n$ is large. In such cases, even storing or writing down the covariance matrix or its estimate becomes impractical, making standard approaches unfeasible. This issue arises in various fields, such as biology, where the problem of reconstructing gene regulatory networks from large-scale gene expression data requires relying on pairwise notions of dependence due to computational complexity \cite{hwang,Chan082099,zhang2011inferring}. Another example of large networks is building human brain functional connectivity networks using functional MRI data, where the data are usually aggregated to obtain a dataset with a moderate number of variables that can be processed with current algorithms \cite{huang2010learning}. 

The goal of \cite{lugosi2021learning} was to provide scalable learning
algorithms for graphical models that relied on sequential queries of
the covariance matrix $\Sigma$.
{
In that paper, the authors show how one can efficiently learn the
structure of a graph with small treewidth (equivalently, small
separation number). In this paper we do not assume that the graph has
a small separation number but rather the focus is on testing this property.
The present article builds on the ideas of  \cite{lugosi2021learning}.
}
We now briefly describe our setup.

\subsection*{Property testing:} 
{
Property testing is a well-established paradigm in theoretical computer science
(\cite{ron2001property}, \cite{goldreich2017introduction}) in which one is interested in testing
whether a large graph has certain properties, but with a limited access to information about the graph.
In the standard model, one is allowed to query the presence of a limited number of edges.
Our setup is similar in the sense that the goal is to test whether the underlying graph has certain
properties (e.g., whether it is a tree), but in graphical model testing, one cannot directly 
infer the presence of any edge. Instead, one is allowed limited access to the entries of the covariance 
matrix whose inverse encodes the graph of interest. This presents an additional challenge to construct
efficient testing procedures.
}
The hypothesis testing framework has been applied for graphical models
mostly in the context of structure learning; see, for example,
\cite{drton2007multiple,klaassen2023uniform}. Testing local
substructures was studied by \cite{verzelen2009tests}. In this paper
we are interested in testing structural properties of the graph
$G$. Our focus is on 
{the separation number as defined below. 
The separation number is closely related to treewidth, a fundamental and well-understood 
quantity in structural and algorithmic graph theory.
In particular, graphs with small 
 bounded treewidth 
}
 have long been
of interest in machine learning due to the low computational cost of
inference in such
models~\cite{chandrasekaran2012complexity,karger2001learning,kwisthout2010necessity}. Moreover,
current heuristics of treewidth estimation in real-world data have
indicated small treewidth in various cases of interest
\cite{abu2016metric,adcock2013tree,maniu2019experimental}.

Property testing in graphical models, also known as combinatorial inference, has been recently studied in several papers \cite{neykov2019combinatorial,neykov2019property,chung2017exact,sun2018sketching,tan2020inferring}. These papers have  relatively little focus on the computational aspects.  

Let $G=(V,E)$ be a graph. The focus of this paper is on testing two
properties of $G$. The first one is whether $G$ is a tree. The second
concerns the separation numbers; see, e.g.,
\cite{dvovrak2019treewidth}. A graph $G=(V,E)$ has separation number
$\sn(G)$ less than or equal to $k$, if for every subset $W\subseteq V$
with $|W|\geq k+2$, there is a partition of $W$ into three sets
$S,A,A'$, such that $A,A' \neq \emptyset$, $|S|\leq k$,
$\max\{|A|,|A'|\}\leq \tfrac23 |W|$ and $S$ separates $A$ from $A'$ in
the subgraph $G_W$. In other words, every subset of vertices can be
separated by a small subset of nodes in a balanced way; see
Section~\ref{sec:treewidth} for a formal discussion. It is easy to see
that $\sn(G)=1$ if and only if $G$ is a tree.

Given a covariance matrix $\Sigma\in M(G)$ for some graph $G$, in this paper we consider the problem of testing
\begin{equation}\label{eg:testtree}
H_0:\;\;G \mbox{ is a tree}\qquad\mbox{against}\qquad H_1:\;\;G \mbox{ has a cycle}.	
\end{equation}
and, more generally,  for $k\geq 1$,
\begin{equation}\label{eq:test}
H_0:\;\;\sn(G)\leq k  \qquad\mbox{against}\qquad H_1:\;\;\sn(G)>k.	
\end{equation}
A testing procedure can make queries of the entries of  $\Sigma$. In other words, $\Sigma\in M(G)$ is a covariance matrix and the problem is to test separation properties
of the graph of the underlying graphical model. The tester may make queries of entries of the covariance matrix.
The goal is to minimize the \emph{query complexity}, that is, the number of entries of $\Sigma$ the 
tester queries before making a decision.
 
We construct testing procedures that follow the general scheme of property testing 
(see, e.g., \cite{goldreich2017introduction}). At each step, a randomized sequential
algorithm decides whether to continue sampling or finish and reject
the null hypothesis. In our tree testing procedure, if the null hypothesis is true, the procedure
\emph{never} rejects it. If the null is not true, the test eventually
detects it. In the generalization of our procedure to $k>1$ this is still approximately true in the sense made precise in Theorem~\ref{th:main3} and Theorem~\ref{th:main2}.

The problem of testing whether $G$ is a tree/forest has been already considered in \cite{neykov2019combinatorial}. In problems that are not too big, a natural procedure is to use the Chow-Liu algorithm \cite{chow1968approximating}, which finds the maximum likelihood tree. This procedure however requires building the maximum cost spanning tree, based on all the entries of $\Sigma$, or more generally, all mutual informations between all pairs of variables. As we explain now, in our paper, we study the situation when this is not possible.

\subsection*{Computational budget:}  We focus on the situation, where $n$ is very large, in which case the covariance matrix $\Sigma$ can be hard to handle. Accessing the whole matrix $\Sigma$ at once is out of our computational bounds, and in particular, computing the inverse $\Sigma^{-1}$ cannot be done directly. Following \cite{lugosi2021learning} we study a query model, in which it is possible to query a single entry of $\Sigma$ at a time. Such situations may either appear when various parts of $\Sigma$ are stored in different locations or when it is possible to design an experiment that allows one to query a particular part of the system. Needless to say, our approach can be also used in the case when $\Sigma$ can be stored and accessed easily but applying transformations to the whole matrix is costly. In the whole analysis we make sure that our procedures are within the desired computational budget. Our goal is to construct testing procedures  whose query complexity is $o(n^2)$.

The query model is natural for many property testing tasks in
graphical models. It has also been applied to the task of recovering
of other properties of $\Sigma$ from a few entries;
\cite{lugosi2021learning,khetan2019spectrum}. In particular, our
procedure of testing whether the underlying graph is a tree has
lower query complexity and is conceptually simpler than
the one in \cite{neykov2019combinatorial}. Moreover, it can be applied
to any distributional setting, as long as we can efficiently test
conditional independence queries $X_i\indep X_j|X_k$.


\subsection*{Main contributions} Here we summarize the main contributions of the paper.

Suppose $\Sigma\in M(G)$ is a covariance matrix in a Gaussian graphical model over the graph $G$. First we propose a randomized Algorithm~\ref{algo3} for testing if $H$ is a tree. 

\begin{thm}[Simplified version of Theorem~\ref{th:main1}]
Let $G$ be a connected graph
 and let $\Sigma$ be a (generic) covariance matrix in a Gaussian graphical model over $G$. 
{
Then, Algorithm~\ref{algo3} correctly identifies whether $G$ is a tree or not.
Moreover, with high probability, it runs with total query complexity	
$$
\cO(n\log(n)(\min\{\log^2n,\Delta\}+\Delta))~,
$$
where
$\Delta$ is the maximum degree of $G$.
}
\end{thm}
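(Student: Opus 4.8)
The plan is to treat the two claims separately and to reduce everything to single-conditioning independence queries of the form $X_i\indep X_j\cd X_k$, each decidable from the three entries $\Sigma_{ij},\Sigma_{ik},\Sigma_{jk}$ (and the diagonal) by testing whether the partial correlation $\rho_{ij}-\rho_{ik}\rho_{jk}$ vanishes. The first step is a faithfulness lemma: for a \emph{generic} $\Sigma\in M(G)$ this quantity vanishes if and only if $k$ separates $i$ from $j$ in $G$. The global Markov property gives one direction unconditionally; the converse fails only on the zero set of a nontrivial polynomial in the free parameters of $M(G)$, hence holds off a measure-zero set. In particular, when $G$ is a tree, $X_i\indep X_j\cd X_k$ holds exactly when $k$ lies on the unique $i$–$j$ path, and correlations multiply along that path. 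These two facts convert the combinatorics of the tree into statements the tester reads off from $\cO(1)$-query primitives.

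For correctness I would argue both directions. Soundness (no false rejection): Algorithm~\ref{algo3} rejects only upon uncovering a small configuration of queried values whose single-vertex separation relations are inconsistent with \emph{every} tree; by the faithfulness lemma such a certificate can arise only when $G$ contains a cycle, so a genuine tree is never rejected. Completeness (detection of cycles): if $G$ has a cycle, two of its vertices are joined by two internally disjoint paths, so by Menger no single vertex separates them, i.e.\ $X_i\indep X_j\cd X_k$ is false for \emph{all} $k$. I would show that the divide-and-conquer recursion, which repeatedly tries to split the current vertex set through an on-path separator, is forced to query such a pair and thereby produces an inconsistency that triggers rejection. Genericity is what guarantees the witnessing partial correlation is exactly nonzero rather than merely small.

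The query-complexity bound is where the work lies. I read Algorithm~\ref{algo3} as a randomized balanced decomposition: each recursive call selects a separator (a centroid-type vertex) by random sampling, splits the remaining vertices into the branches hanging off it, and recurses. Two estimates are needed. First, random pivoting produces, with constant probability per trial, a split in which every part has at most a fixed fraction of the current vertices; a Chernoff bound together with a union bound over the $\cO(\log n)$ levels upgrades this to a high-probability guarantee that the recursion depth is $\cO(\log n)$. Second, the per-vertex cost of deciding which branch a vertex belongs to is $\cO(\min\{\log^2 n,\Delta\})$: naively one tests the vertex against each of the $\leq\Delta$ branches, but for large $\Delta$ one binary-searches the correct child using $\cO(\log^2 n)$ on-path separator queries, while scanning the separator's neighborhood contributes the additive $\cO(\Delta)$. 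Multiplying the $\cO(n(\min\{\log^2 n,\Delta\}+\Delta))$ per-level cost by the $\cO(\log n)$ depth yields the claimed $\cO(n\log n(\min\{\log^2 n,\Delta\}+\Delta))$.

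The main obstacle I anticipate is the complexity analysis rather than correctness. One must simultaneously prove that randomized separator selection gives balanced splits with probability high enough that the union bound over levels survives, and design and analyze the branch-location subroutine so that its cost is genuinely $\cO(\min\{\log^2 n,\Delta\})$ — in particular the binary-search variant must correctly single out the right child among many using only on-path separator tests, and its correctness again rests on the faithfulness lemma. Controlling the interaction between the random depth and the per-level cost, so that the high-probability bound lands on the \emph{product} rather than on each factor separately, is the delicate point; the remainder is bookkeeping.
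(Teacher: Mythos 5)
There is a genuine gap, and it sits at the heart of both correctness and complexity: your proposal has nothing playing the role of the weighted centroid lemma (Lemma~\ref{lem:partA}). In Algorithm~\ref{algo3} the pivot is \emph{not} chosen by random trial: Algorithm~\ref{algo2} samples a set $W$ of size $m=\Theta(\log(n/\epsilon))$, computes the estimate $\widehat M(v)$ for \emph{every} vertex $v$ by running {\tt Components}$(\Sigma^{(v)},W)$, takes the minimizer $v^*$, and \emph{rejects} whenever $\min_v \widehat M(v)>|V|/2$. Soundness therefore requires that for a genuine tree this rejection rule can never fire, for any realization of the sample. That is exactly what Lemma~\ref{lem:partA} gives: applying the centroid argument to the weights $w(v)=\indic\{v\in W\}$ shows deterministically that some vertex splits the sampled weight in half, so a tree is never rejected, with probability one. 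Your soundness sketch (``a certificate inconsistent with every tree'') never engages with this rejection rule, and your complexity sketch --- ``random pivoting produces, with constant probability per trial, a balanced split'' --- is false for trees in general (for a star, only the center is balanced, so a random pivot fails with probability $1-1/n$), and in this algorithm a failure to find a balanced pivot is not a wasted trial but a rejection. Randomness in the paper enters only through the accuracy of $\widehat M$ (the hypergeometric tail bound of Lemma~\ref{lem:treecomp}), and hence only affects the recursion depth, never the answer; your plan to let a Chernoff bound over levels carry both correctness and depth cannot be repaired without the centroid lemma.

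Two further points. First, Algorithm~\ref{algo3} recurses on the submatrix $\Sigma_{B,B}$ with $B=C\cup\{v^*\}$, so one must prove that $\Sigma_{B,B}$ still encodes the induced subgraph, i.e., $\Sigma_{B,B}\in M^1(G_B)$; this is Lemma~\ref{lem:marg} (using that a single vertex is trivially a clique), and your proposal omits it --- your CI-query framing does not automatically cover it, because after descending the queries are made against $\Sigma_{B,B}$, not against the full distribution. Second, the term $m\min\{m,\Delta\}$ in the paper's complexity comes from running {\tt Components} on the $m$-point sample for each vertex (Lemma~\ref{lem:comp}: cost $\cO(m\ell)$ with $\ell\le\min\{m,\Delta\}$ components); your ``binary search among the $\le\Delta$ branches using $\cO(\log^2 n)$ on-path queries'' is an invented subroutine that is neither specified nor analyzed, and it is not how the stated bound arises. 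Your completeness direction (cycle detection) is also only asserted: the actual mechanism is that a cycle can never be cut by removing a cut vertex, so it survives inside one component of every split until either the {\tt BREAK} fires or the cycle lands in a component of size $\le m$ where the direct inversion check detects it.
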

Although this result is formulated for Gaussian graphical models, we
show in Section~\ref{sec:ngtrees} that it can be applied in a number of other situations (e.g., for binary data). Moreover, our algorithm can be further extended to work for
\emph{any} distributional setting and it uses a minimal number of
conditional independence queries $X_i\indep X_j|X_k$, which in the
Gaussian case corresponds to checking the equality
$\Sigma_{ij}\Sigma_{kk}=\Sigma_{ij}\Sigma_{ik}$ involving only four entries of the covariance matrix--and therefore requiring only four queries.

Second, we study the problem of testing if the underlying graph has a small separation number. We discuss two algorithms. The first one  generalizes our procedure for trees. It has strong theoretical guarantees but it may become inconclusive for certain type of graphs. If the algorithm is conclusive, we say that it has a good run; see Definition~\ref{def:goodrun} for a formal discussion.

\begin{thm}[Simplified version of Theorem~\ref{th:main3}]
Let $G$ be a connected graph 
and let $\Sigma$ be a (generic) covariance matrix in a Gaussian
graphical model over $G$. Let $k$ be a positive integer and run Algorithm~\ref{algomainmarg}
with parameter $k$. Suppose that the algorithm has good run. If  the algorithm terminated, then $\sn(G)\leq 2k$ and if it broke then $\sn(G)>\tfrac23 k$.	
Moreover, with high probability, it
runs with total query complexity
$$
\cO(n\log(n)\max\{\log(n),k\Delta\})~.
$$
where
$\Delta$ is the maximum degree of $G$.
\end{thm}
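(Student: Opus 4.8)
The plan is to split the argument into a \emph{correctness} part, which establishes the two bounds $\sn(G)\le 2k$ and $\sn(G)>\tfrac23 k$, and a \emph{complexity} part, which bounds the number of queries with high probability. The common foundation is faithfulness: for a generic $\Sigma\in M(G)$, for disjoint $i,j$ and any set $S$ the conditional independence $X_i\indep X_j\mid X_S$ holds if and only if $S$ separates $i$ from $j$ in $G$, and moreover the algebraic test employed by the algorithm (vanishing of the relevant partial correlation, which in the single-conditioning case is $\Sigma_{ij}\Sigma_{kk}=\Sigma_{ik}\Sigma_{jk}$) detects this exactly, with no accidental cancellations. Under the good-run hypothesis of Definition~\ref{def:goodrun} every separator-detection step of Algorithm~\ref{algomainmarg} therefore returns the correct answer, so correctness reduces to a purely combinatorial statement about the recursive decomposition the algorithm produces.

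For correctness I would first treat the case in which the algorithm terminates. At that moment it has built a recursive balanced decomposition of $V$ in which every split set $W$ is separated by some $S$ with $|S|\le 2k$ and $\max\{|A|,|A'|\}\le\tfrac23|W|$; the factor $2$ reflects that the heuristic may combine two size-$k$ candidate sets before certifying the separation. I would then invoke the equivalence between such a recursive balanced-separator decomposition and a treewidth bound, together with the inequality $\sn(G)\le\tw(G)+1$, to upgrade the decomposition of $V$ into the statement that \emph{every} induced subgraph has a balanced separator of size $\le 2k$, which is exactly $\sn(G)\le 2k$. In the case that the algorithm breaks, I would isolate the set $W$ at which no admissible separator was found; since the good-run assumption guarantees the detection step did not overlook a valid separator, this exhibits an induced subgraph $G_W$ with no balanced separator of size $\le\tfrac23 k$, i.e.\ a witness for $\sn(G)>\tfrac23 k$.

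For the query complexity I would control the recursion depth and the per-level cost. Because every split is balanced, each child carries at most $\tfrac23$ of the vertices, so the recursion depth is $\cO(\log n)$ and the total number of vertices processed at a single level is $\cO(n)$. For each vertex, deciding its position relative to a candidate separator of size $\cO(k)$ reduces, via faithfulness, to a bounded number of conditional-independence queries involving the $\cO(k)$ separator vertices and their $\cO(\Delta)$ neighbours, for a cost of $\cO(k\Delta)$; the randomized sampling that localizes the separator contributes an additional $\cO(\log n)$ term, giving a per-vertex cost of $\cO(\max\{\log n,k\Delta\})$. Multiplying the three factors yields $\cO\big(n\log(n)\max\{\log(n),k\Delta\}\big)$. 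The high-probability qualifier follows from a union bound over the $\cO(\log n)$ levels together with a concentration estimate showing that the randomized separator search succeeds within its allotted budget at each level.

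The step I expect to be the main obstacle is the combinatorial core of the termination case: showing that the decomposition the algorithm outputs certifies $\sn(G)\le 2k$ for \emph{all} subsets $W$, not merely for those visited along the recursion, and pinning down the exact constants that open the $2k$ versus $\tfrac23 k$ gap. Translating the local, affordable conditional-independence tests into a global separation guarantee, and checking that the $\tfrac23$ balance survives the merging that inflates $k$ to $2k$, is where the delicate bookkeeping lies; once this correspondence is in place, the faithfulness and complexity parts are comparatively routine.
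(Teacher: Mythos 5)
There is a genuine gap in both halves of your correctness argument, and in both cases it is the same missing ingredient: the algorithm does not inspect induced subgraphs $G_B$ of $G$, but their \emph{closures} $\overline G_B$ (Definition~\ref{def:closure}, Proposition~\ref{prop:inB}), and the separators it finds are only local objects that must be converted into global ones. For the termination case, your route --- read the recursive decomposition as a treewidth certificate and then apply $\sn(G)\le \tw(G)+1$ --- does not give $2k$. A recursive balanced-separator decomposition yields a tree decomposition whose bags are unions of separators along root-to-leaf paths, i.e.\ width $\cO(k\log n)$, not $\cO(k)$; the only way to get width $\cO(k)$ from separators is to already know that \emph{every} subset has a balanced separator of size $\cO(k)$, which is precisely the statement $\sn(G)\le 2k$ you are trying to prove, so the argument is circular (it would at best recover the $\cO(k\log(n/k))$ bound of Theorem~\ref{th:main2}, which is the weaker conditional-descent result). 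The paper proves the statement directly: given an arbitrary $W$ of size $k+2$, it follows the unique recursion branch whose component contains more than $\tfrac23|W|$ of $W$ until this stops, and shows that the union $S_t\cup S_{t+1}$ of \emph{two consecutive separators} along that branch (size $\le 2k$) is a balanced separator of $W$ in $G$. That is the true origin of the factor $2$ --- not a ``merging of two candidate sets during certification'' as you suggest --- and it requires Lemma~\ref{lem:loc2globsep}, which upgrades separators found in $\overline G_{B}$ (where the algorithm actually works) to separators in $G$. You correctly flagged this ``for all $W$, not just visited components'' step as the main obstacle, but your proposal does not supply the mechanism that overcomes it.

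The break case has the symmetric problem. You claim a break ``exhibits an induced subgraph $G_W$ with no balanced separator of size $\le\tfrac23 k$,'' but the break only certifies that no balanced separator of size $\le k$ exists in the closure $\overline G_B$ of the current component $B=S\cup C$. Since $\overline G_B$ has extra edges among the vertices of $S$, a balanced separator of size $\le\tfrac23 k$ of $G_B$ in $G$ may well fail to separate anything in $\overline G_B$, so the break does not directly contradict $\sn(G)\le\tfrac23 k$. The paper closes this gap by a separate argument: assuming $\sn(G)\le\tfrac23 k$, it takes the hypothetical separator $S'$ of $G_B$, observes that both sides must meet $S$ (else $S'$ would separate in $\overline G_B$ and the algorithm would have found it), and augments $S'$ by the smaller of the two intersections with $S$ to manufacture a separator of size $\le k$ that \emph{does} work in $\overline G_B$ --- contradiction. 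This augmentation is exactly why the theorem's lower conclusion degrades from $k$ to $\tfrac23 k$; your proposal has no counterpart to it and offers no explanation of where the $\tfrac23$ comes from. Your complexity outline (depth $\cO(\log n)$ times per-level cost $\cO(n\max\{\log n,k\Delta\})$, union bound over levels) is the right shape and matches the paper's, though it elides the sample-size bound \eqref{eq:boundW} coming from the VC-dimension argument of Theorem~\ref{thm:separator}, which is what makes the per-level separator search affordable and the high-probability claim precise.
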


Moreover, as we show in Theorem~\ref{th:decomp}, in the special case when $G$ is decomposable (or chordal), our algorithm always has a good run and it gives a definite answer whether $\sn(G)\leq k$ or not directly generalizing the tree case. A graph is decomposable if it has no induced cycles of size greater of equal to four; see, for example, Section~2.1.2 in \cite{lau96}. 

Our second algorithm works in all situations but has weaker theoretical guarantees.

\begin{thm}[Simplified version of Theorem~\ref{th:main2}]
Let $G$ be a connected graph 
and let $\Sigma$ be a (generic) covariance matrix in a Gaussian
graphical model over $G$. Let $k$ be a positive integer and run Algorithm~\ref{algomaincond}
with parameter $k$. If  the algorithm breaks then $\sn(G)> k$. If it terminates, then $\sn(G)\leq 10k\log(\tfrac{n}{k})$. Moreover, with high probability, it
runs with total query complexity
$$
\cO(n\log(n)\max\{\log(n),k\Delta\})~.
$$
where
$\Delta$ is the maximum degree of $G$.
\end{thm}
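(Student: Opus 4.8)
The plan is to establish the two deterministic implications and the probabilistic complexity bound as three separate arguments, all resting on the fact that for a generic $\Sigma\in M(G)$ the Gaussian law is faithful to $G$, so that every conditional independence query $X_i\indep X_j\mid X_S$ returns true exactly when $S$ separates $i$ from $j$ in $G$. This faithfulness holds off a proper subvariety of the parameters, hence for generic $\Sigma$, and it is what lets me read the combinatorial separator structure of $G$ off the covariance queries that Algorithm~\ref{algomaincond} performs. I will invoke it throughout and treat the underlying separator-finding subroutine as a black box whose correctness it guarantees.

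For the first implication I would argue directly. The algorithm only breaks when, for some vertex set $W$ arising in its recursion with $|W|\ge k+2$, its search reports that no separator $S\subseteq W$ with $|S|\le k$ splits $W$ into two parts each of size at most $\tfrac23|W|$. By faithfulness this report is faithful to $G[W]$, so such a balanced size-$\le k$ separator genuinely fails to exist for this particular $W$. Since the definition of $\sn(G)\le k$ demands a balanced separator of size $\le k$ for \emph{every} subset of size $\ge k+2$, the witnessing $W$ certifies $\sn(G)>k$. This direction is purely deterministic: the randomization affects only how quickly the search runs, not the soundness of the break condition.

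The heart of the argument is the second implication, and I expect the constant-tracking there to be the main obstacle. Upon termination the algorithm has produced a laminar family of separators, i.e.\ a rooted separator tree for $V$ in which every node carries a separator of size $\le k$ and every split is balanced in the $\tfrac23$ sense; balance forces the piece sizes to decay geometrically from $n$ down to $\cO(k)$, so the depth is $d\le\log_{3/2}(n/k)$. I would then pass to the associated tree decomposition whose bag at each node is the union of the separators along the root-to-node path, so that every bag has size at most $kd$ and the width is at most $kd-1$. Tuning the piece-size threshold, this is $<10k\log(n/k)$. Finally I invoke the standard fact that a tree decomposition of width $w$ yields a balanced separator of size $\le w+1$ for every vertex subset, i.e.\ $\sn(G)\le \tw(G)+1$, which upgrades the width bound to $\sn(G)\le 10k\log(n/k)$. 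The conceptual point, and the reason the bound is off by a logarithmic factor from the break threshold, is that the algorithm certifies only a single decomposition of $V$, from which balanced separators for arbitrary subsets must be assembled across all $d$ levels, rather than verifying the definition subset by subset.

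For the query complexity I would bound the total work level by level. Because a terminating (or not-yet-broken) run forces geometrically balanced splits, the pieces at any fixed level partition $V$ up to separators, so their sizes sum to $\cO(n)$, and there are $\cO(\log n)$ levels. Processing a piece of size $m$ costs $\cO(m\max\{\log n,k\Delta\})$ covariance queries: the $k\Delta$ term from scanning the bounded-degree neighborhoods to assemble candidate separators of size $\le k$, the $\log n$ term from the random sampling used to estimate split balance, and each conditional independence test reduces to $\cO(1)$ covariance queries as established earlier. Summing over a level gives $\cO(n\max\{\log n,k\Delta\})$ and over all levels $\cO(n\log n\max\{\log n,k\Delta\})$. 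The only probabilistic ingredient is that the randomized balance estimates and separator searches all succeed within budget; a union bound over the $\cO(n\log n)$ piece-operations, each failing with probability $1/\mathrm{poly}(n)$, makes this hold with high probability, completing the plan.
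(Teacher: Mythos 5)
Your high-level architecture (break $\Rightarrow$ $\sn(G)>k$; terminate $\Rightarrow$ $\sn(G)\le 10k\log(n/k)$ via the recursion tree; complexity summed over levels) mirrors the paper's proof of Theorem~\ref{th:main2}, and your route to the second implication---assembling a tree decomposition whose bags are unions of separators along root-to-node paths, bounding its width, and invoking $\sn(G)\le\tw(G)+1$---is a legitimate alternative to the paper's argument, which instead fixes an arbitrary subset $W$ and walks down the recursion tree following the unique child containing more than $\tfrac23|W|$ of $W$, so that the union of separators along that path is itself a balanced separator of $W$. However, there are genuine gaps. The central one: you treat the separator subroutine as a black box whose correctness follows from faithfulness of $\Sigma$ to $G$, but Algorithm~\ref{algomaincond} descends \emph{conditionally}: at depth $\ell$ it operates on $\Sigma^{(\overline S)}_{C,C}$ with $\overline S=S_1\cup\cdots\cup S_{\ell-1}$. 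Faithfulness of $\Sigma$ says nothing a priori about these derived matrices; one must prove that $\Sigma^{(S)}_{C,C}=((\Sigma^{-1})_{C,C})^{-1}\in M(G_C)$ and that strong faithfulness is inherited by $\Sigma^{(S)}_{C,C}$ relative to the induced subgraph $G_C$ (Lemma~\ref{lem:lugosi2021learning}, proved via Guttman rank additivity). Without this inheritance, neither the soundness of the break condition at depth $\ge 2$ nor the claim that the recursion tree encodes genuine separators of $G$ (which both your tree decomposition and the paper's walk require) is justified. Relatedly, your break-direction argument misdescribes what the algorithm checks: it does not search for separators $S\subseteq W$ inside $G[W]$; it minimizes $\rk(\Sigma^{(\overline S)}_{A,A'})$ over balanced bipartitions of $W$, which under strong faithfulness measures minimal separators of $A,A'$ in the ambient component---translating between these two notions is part of what must be argued, not a consequence of faithfulness alone.

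The second gap is the query complexity. Each entry of $\Sigma^{(\overline S)}$ costs $\Theta(|\overline S|)$ covariance queries, not $\cO(1)$, and $|\overline S|$ grows like $k\ell$ at depth $\ell$ because the conditioning sets accumulate down the recursion; this bookkeeping is exactly what distinguishes the conditional descent from the marginal one, and it is why the paper's proof arrives at $\cO(n\log^2(n)\,k\max\{m,\Delta\})$ rather than the bound you derive. Your per-piece cost $\cO(m'\max\{\log n,k\Delta\})$ silently reuses the tree-case accounting, where one conditions on a single vertex and four queries suffice. Finally, a smaller point: the geometric decay of component sizes (hence the depth bound $10\log(n/k)$ and the constant $10$) is not deterministic---termination only forces balance with respect to the sampled sets $W$, while balance with respect to the full components holds with probability $1-\epsilon$ via Theorem~\ref{thm:separator}. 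So the conclusion $\sn(G)\le 10k\log(n/k)$ is itself only guaranteed on that high-probability event, which your claim that randomness ``affects only how quickly the search runs'' obscures.
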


\subsection*{Notation}
We briefly introduce some notation used throughout the paper. Denote
by $\S^n$ the set of real symmetric $n\times n$ matrices and by
$\S^n_+$ its subset given by positive definite matrices. For
$\Sigma\in \S^n$, let $\Sigma_{A,B}$ denote the submatrix of $\Sigma$
with rows in $A\subseteq \{1,\ldots,n\}$ and columns in
$B\subseteq \{1,\ldots,n\}$. Writing $\Sigma_{\setminus i,i}$, we mean
taking $A=\{1,\ldots,n\}\setminus \{i\}$ and $B=\{i\}$. Similarly, for
a vector $x\in \R^n$ and $A\subseteq \{1,\ldots,n\}$ we write $x_A$ to
denote the subvector of $x$ with entries $x_i$ for $i\in A$.

For a graph $G=(V,E)$ and a subset $B\subseteq V$ denote by $G_B$
 the induced subgraph, that is, the subgraph of $G$ with vertex set $B$ and edge set given by all edges in
$E$ with both endpoints in $B$. To write that $G$ contains an edge
between vertices $i,j$ we write $ij\in G$.

  The rest of the paper is structured as follows.
In Section \ref{sec:faitful} we discuss the ``faithfulness''
assumptions that the covariance matrix is required to fulfil in order
for our procedures to work correctly. In particular, we point out that
the set of covariance matrices satisfying the requirements is an open
dense subset of the set of symmetric positive definite matrices. We
also introduce a simple procedure to determine the connected
components of the underlying graph.
In Section~\ref{sec:tree} we discuss the simplest special case, that
is, the problem of testing whether the underlying graph is a tree.
We state and prove the first main theorem (Theorem \ref{th:main1})
that shows that one can efficiently test whether $G$ is a
bounded-degree tree. In Section \ref{sec:treewidth} we present the
general algorithm for testing small separation number, culminating
in Theorems \ref{th:main3} and \ref{th:main2}, the main results of the paper.
Finally, in Section \ref{sec:ngtrees} we discuss extensions to more
general (not necessarily Gaussian) graphical models.

%

\section{Faithfulness and connected components}\label{sec:faitful}

\subsection{Separation in graphs}\label{sec:separation}

For a graph $G=(V,E)$ we say that $A,A'\subset V$ are separated by a
vertex set $S$ if every path between a vertex in $A$ and a vertex in
$A'$ contains a vertex in $S$. In other words, $A$ and $A'$ are
disconnected in the graph $G\setminus S$ obtained from $G$ by removing
the vertices in $S$ and all the incident edges.

In graphical models, vertices of the graph $G$ represent random variables and no edge between $i$ and $j$ implies conditional independence $X_i\indep X_j|X_{V\setminus \{i,j\}}$. For strictly positive densities, the Hammersley-Clifford theorem also implies that $X_i\indep X_j|X_S$ whenever $S$ separates $i$ from $j$ in $G$; see \cite{lau96} for more details.

In the Gaussian case the conditional independence $X_i\indep X_j|X_S$ is equivalent to $\Sigma^{(S)}_{ij}=0$,
where $\Sigma^{(S)}_{ij}$ is the $(i,j)$-th entry of the conditional covariance matrix
\begin{equation}\label{eq:SigmaS}
\Sigma^{(S)} \;:=\;\Sigma_{\overline S,\overline S}-\Sigma_{\overline S,S}\Sigma_{S,S}^{-1}\Sigma_{S,\overline S}\qquad \mbox{with }\overline S=V\setminus S.
\end{equation}
Equivalently, by the Guttman rank additivity formula (see (14) in \cite{lugosi2021learning}), $\rk(\Sigma_{A\cup S,A'\cup S})=|S|$, where $A,A'$ are any two subsets of nodes separated by $S$ in $G$. Note also that if $S=\{v\}$ for  $v\in [n]$, \eqref{eq:SigmaS} specializes to 
\begin{equation}\label{eq:Sigmav}
\Sigma^{(v)} \;:=\;\Sigma_{V\setminus \{v\}, V\setminus \{v\}}-\frac{1}{\Sigma_{v,v}}\Sigma_{V\setminus \{v\},\{v\}}\Sigma_{\{v\}, V\setminus \{v\}}
\end{equation}


All our procedures rely on a divide-and-conquer approach, where in each step the graph is divided into balanced components by a small separator $S$, in which the matrix $\Sigma^{(S)}$ plays a crucial role.

\subsection{Faithfulness and learning connected components}

It follows from the definition of $M(G)$ in \eqref{eq:defMG} that, if $H$ is a subgraph of $G$, then $M(H)\subset M(G)$. In particular, if $\Sigma$ is diagonal, then $\Sigma\in M(G)$ for every $G$. In order to be able to read from $\Sigma\in M(G)$ structural information about $G$, we need to require that $\Sigma$ is in some way generic in $M(G)$. In this paper we consider two such genericity conditions.
\begin{defn}\label{def:faith0}
	We say that $\Sigma\in M(G)$ is \emph{faithful} over $G$ if for any $A,A',S\subseteq V$ we have ${\rm rank}(\Sigma_{A\cup S,A'\cup S})=|S|$ if and only if $S$ separates $A,A'$ in $G$. 
\end{defn}
\noindent Note that the left implication in
Definition~\ref{def:faith0} always holds by the discussion in
Section~\ref{sec:separation} and faithfulness implies that the
opposite implication also holds. This is a genericity condition in the
sense that the set of $\Sigma\in M(G)$ do not satisfy this condition
is a union of explicit proper algebraic subsets of $M(G)$ and thus the
faithful $\Sigma$ form a dense subset of $M(G)$. 
\begin{defn}\label{def:stfaith0}
	We say that $\Sigma\in M(G)$ is \emph{strongly faithful} over $G$ if for any $A,A'\subseteq V$,  ${\rm rank}(\Sigma_{A,A'})$ equals the size of a minimal separator of $A,A'$ in $G$. Denote the set of strongly faithful covariance matrices by $M^\circ(G)$. 
\end{defn}
\noindent By Theorem~2.15 in \cite{sullivant2010trek},
$\rk(\Sigma_{A,A'})$ is always upper bounded by the size of a minimal separator and equality holds generically and so $M^\circ(G)$ forms a dense subset of $M(G)$. Moreover, strong faithfulness implies faithfulness. Indeed, if $S$ separates $A,A'$ then $S$ is the minimal separator of $A\cup S$ and $A'\cup S$. By strong faithfulness, $\rk(\Sigma_{A\cup S,A'\cup S})=|S|$. 

In applications, often one does not have access to a perfect oracle that returns 
exact values of the queried entries of the covariance matrix. In such cases,
 working with a ``noisy'' oracle, one needs to assume that if
$i$ and $j$ are not separated by $S$ then
$|\Sigma^{(S)}_{ij}|\geq \epsilon$ for some $\epsilon>0$. Assuming
such inequalities for too many instances may result in a small set of eligible covariance matrices
\cite{uhler2013geometry}. In some situations, our procedures will work under strictly weaker genericity conditions.

\begin{defn}\label{def:faith}
Fix $\tau\in \N$. The matrix $\Sigma\in M(G)$ is \emph{$\tau$-faithful to $G$} if the condition in Definition~\ref{def:faith0} holds whenever $|S|\leq \tau$. We write $\Sigma\in M^\tau(G)$. Moreover, $\Sigma$ is $\tau$-strongly faithful to $G$ if the condition in Definition~\ref{def:stfaith0} holds if $\rk(\Sigma_{A,A'})\leq \tau$. We write $\Sigma\in M^{\tau,\circ}(G)$.
\end{defn}  
\noindent For example, the matrix $\Sigma\in M(G)$ is \emph{$0$-faithful to $G$} if
  $\Sigma_{ij}=0$ is equivalent to $i$ and $j$ lying in two different
  connected components of $G$. The fact that $\tau$-strong faithfulness implies $\tau$-faithfulness follows by the same argument as above.



The role of the faithfulness assumptions is that
$\Sigma$ then encodes accurately the underlying graph or at least the
part we care about. The following fact gives the first instance of
how it can be exploited.
\begin{lem}\label{lem:M0coms}
If $\Sigma\in M^0(G)$ then $\Sigma$ has a block diagonal structure with blocks corresponding to the connected components of $G$ and each block has \emph{all non-zero} elements.	More generally, if $\Sigma\in M^\tau(G)$ and $|S|\leq \tau$ then $\Sigma^{(S)}\in M^0(G\setminus S)$.
\end{lem}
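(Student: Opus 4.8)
The plan is to prove the two assertions of Lemma~\ref{lem:M0coms} separately, though the first is really a special case of the second with $S=\emptyset$. For the first claim, suppose $\Sigma\in M^0(G)$. By the definition of $0$-faithfulness (the $\tau=0$ case in Definition~\ref{def:faith}), we have $\Sigma_{ij}=0$ if and only if $i$ and $j$ lie in different connected components of $G$. After relabelling the vertices so that those in the same connected component are grouped together, this equivalence says precisely that $\Sigma$ is block diagonal with one block per connected component, and that \emph{within} each block every off-diagonal entry is nonzero; the diagonal entries are nonzero because $\Sigma\in\S^n_+$ is positive definite, so $\Sigma_{ii}>0$ for all $i$. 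This establishes the ``all non-zero'' claim for each block.

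For the general statement, fix $\Sigma\in M^\tau(G)$ and a separator set $S$ with $|S|\le\tau$, and let $H:=G\setminus S$ be the induced subgraph on $\overline S=V\setminus S$. I must show $\Sigma^{(S)}\in M^0(H)$, which by the first part amounts to checking the $0$-faithfulness of $\Sigma^{(S)}$ to $H$: that is, for $i,j\in\overline S$, the entry $\Sigma^{(S)}_{ij}$ vanishes exactly when $i,j$ lie in distinct connected components of $H$. First I would argue the ``only if'' direction using the separation interpretation of the conditional covariance recalled in Section~\ref{sec:separation}: if $i$ and $j$ are in different connected components of $H=G\setminus S$, then no path in $G$ joins $i$ and $j$ without passing through $S$, so $S$ separates $\{i\}$ from $\{j\}$ in $G$, and hence $\Sigma^{(S)}_{ij}=0$ by \eqref{eq:SigmaS} (equivalently, by the rank characterization $\rk(\Sigma_{\{i\}\cup S,\{j\}\cup S\}})=|S|$ together with the Guttman formula). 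This direction does not even need faithfulness.

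The converse is where the $\tau$-faithfulness hypothesis does the work, and it is the main obstacle. Suppose $i,j\in\overline S$ lie in the \emph{same} connected component of $H$; I want $\Sigma^{(S)}_{ij}\neq 0$. Equivalently, I must rule out that $S$ separates $\{i\}$ from $\{j\}$ in $G$, and then invoke faithfulness to conclude the conditional covariance is nonzero. If $i,j$ are connected in $G\setminus S$ then there is a path from $i$ to $j$ avoiding $S$, so $S$ does \emph{not} separate $\{i\}$ from $\{j\}$ in $G$; since $|S|\le\tau$, the $\tau$-faithfulness condition in Definition~\ref{def:faith0} gives $\rk(\Sigma_{\{i\}\cup S,\{j\}\cup S})>|S|$, and translating this rank statement back through the Guttman additivity formula yields $\Sigma^{(S)}_{ij}\neq 0$. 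The care needed here is to match the quantifier in Definition~\ref{def:faith0} (which concerns arbitrary $A,A',S$) to the single-pair situation $A=\{i\}$, $A'=\{j\}$, and to confirm that the relevant separator has size at most $\tau$ so the $\tau$-faithful hypothesis applies; once this bookkeeping is done, combining both directions shows $\Sigma^{(S)}$ is $0$-faithful to $G\setminus S$, i.e. $\Sigma^{(S)}\in M^0(G\setminus S)$, completing the proof.
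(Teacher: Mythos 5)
The paper actually states Lemma~\ref{lem:M0coms} without proof---it is treated as immediate from the definitions---so there is no in-paper argument to compare against; your proposal supplies exactly the details the authors left implicit, and it is correct. Both directions are handled the right way: separation implies vanishing of $\Sigma^{(S)}_{ij}$ unconditionally (the ``left implication'' the paper notes always holds), while the converse is where $\tau$-faithfulness with $|S|\le\tau$ enters, via the contrapositive of ``$\rk(\Sigma_{\{i\}\cup S,\{j\}\cup S})=|S|$ implies $S$ separates,'' combined with the Guttman identity $\rk(\Sigma_{\{i\}\cup S,\{j\}\cup S})=|S|+\rk(\Sigma^{(S)}_{ij})$ (note the rank cannot drop below $|S|$ since $\Sigma_{S,S}$ is positive definite, so ``not equal'' indeed means ``$=|S|+1$''). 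One small formal point is missing: the conclusion $\Sigma^{(S)}\in M^0(G\setminus S)$ requires, by Definition~\ref{def:faith}, not only the $0$-faithfulness equivalence you verify but also the membership $\Sigma^{(S)}\in M(G\setminus S)$, i.e.\ that $\Sigma^{(S)}$ is Markov to the induced subgraph. This is a one-line addition: by the Schur-complement inversion formula, $(\Sigma^{(S)})^{-1}=(\Sigma^{-1})_{\overline S,\overline S}$, so for $i,j\in\overline S$ not adjacent in $G\setminus S$ (hence not adjacent in $G$) one has $((\Sigma^{(S)})^{-1})_{ij}=(\Sigma^{-1})_{ij}=0$. With that line added your proof is complete; it also makes your first claim a genuine corollary of the second by taking $S=\emptyset$, as you observe.
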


By Lemma~\ref{lem:M0coms}, if $\Sigma\in M^0(G)$, the connected component containing a vertex $i$ is equal to the support of the vector $\Sigma_{\setminus i,i}$ consisting of all the off-diagonal entries of the $i$-th column of $\Sigma$. This observation is a building block behind many useful procedures that we utilize later. For example, to identify all the connected components of $G$, we run the procedure \texttt{components}$(\Sigma,[n])$ as outlined in Algorithm~\ref{algo1}. The algorithm takes a random vertex $i$ and checks the support of  $\Sigma_{\setminus i,i}$. If this support is $C_1$, sample then a random vertex $j$ from $[n]\setminus C_1$ and check the support of $\Sigma_{j,[n]\setminus (j\cup C_1)}$ to get the component $C_2$ of $j$. This proceeds until the union of all the components obtained in this way is precisely $[n]$.
\begin{lem}\label{lem:comp}
	Let $G=(V,E)$ be a graph with connected components $C_1,\ldots,C_\ell$ and fix a subset of nodes $W\subseteq V$. If $\Sigma\in M^0(G)$, then the procedure \emph{\texttt{components}}$(\Sigma,W)$ finds the decomposition of $W$ into the sets $C_1\cap W,\ldots,C_\ell\cap W$. The algorithm uses $\mathcal O(|W|\min\{\ell,|W|\})$ covariance queries.\end{lem}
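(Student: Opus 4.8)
The plan is to prove the two claims—correctness and the query bound—separately, with both resting on the support characterization supplied by Lemma~\ref{lem:M0coms}. Since $\Sigma\in M^0(G)$, that lemma guarantees that $\Sigma$ is block diagonal with one all-nonzero block per connected component of $G$; equivalently, for distinct vertices $i,j$ we have $\Sigma_{ij}\neq 0$ if and only if $i$ and $j$ lie in the same connected component. This is the only structural input I need.

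For correctness, I would track the state of the procedure through its successive iterations. Let $R_1=W$, and suppose at the start of iteration $m$ the remaining set is $R_m$; the algorithm picks a seed $j_m\in R_m$ and forms the set $D_m$ consisting of $j_m$ together with all $v\in R_m\setminus\{j_m\}$ with $\Sigma_{v,j_m}\neq 0$, then sets $R_{m+1}=R_m\setminus D_m$. I would prove by induction on $m$ that $R_m$ is a union of sets of the form $C_t\cap W$ and that $D_m=C\cap W$, where $C$ is the connected component containing $j_m$. The inductive step uses the support characterization: the queried entries identify exactly the vertices of $R_m$ in the same component as $j_m$, and since every set removed in earlier iterations is a full intersection $C_t\cap W$ for components $C_t\neq C$, intersecting with $R_m$ rather than $W$ makes no difference, so $C\cap R_m=C\cap W$. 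Because each $D_m$ is nonempty and the $D_m$ are pairwise disjoint, the procedure halts once $R_{m+1}=\emptyset$, at which point the collection $\{D_1,\dots,D_p\}$ is precisely the list of nonempty intersections $C_t\cap W$, i.e.\ the desired decomposition of $W$.

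For the query count, I would note that iteration $m$ only queries the single column indexed by $j_m$ against the other elements of $R_m$, costing $|R_m|-1\leq |W|$ queries. The number of iterations $p$ equals the number of distinct components meeting $W$, so $p\leq \ell$; and since the $D_m$ are nonempty and disjoint subsets of $W$, also $p\leq |W|$, giving $p\leq \min\{\ell,|W|\}$. Multiplying, the total is at most $p|W|\leq |W|\min\{\ell,|W|\}$, which is the claimed $\cO(|W|\min\{\ell,|W|\})$.

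I do not expect a serious obstacle here; the argument is essentially bookkeeping. The one point that deserves care is the inductive identity $C\cap R_m=C\cap W$—one must verify that restricting the support query to the shrinking set $R_m$ never omits a vertex of the current component, which holds precisely because the removed material consists of entire other-component intersections and the components partition $V$. Stating this disjointness invariant cleanly is the crux of a fully rigorous write-up.
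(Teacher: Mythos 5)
Your proof is correct and follows essentially the same route as the paper: correctness comes from the support characterization in Lemma~\ref{lem:M0coms}, and the query bound from counting the column queries over the iterations. The only cosmetic difference is that the paper records the exact cost $|C_1\cap W|+2|C_2\cap W|+\cdots+\ell|C_\ell\cap W|$ and bounds it, whereas you bound each iteration by $|W|$ and multiply by the number of iterations $\min\{\ell,|W|\}$ — both yield the same $\mathcal{O}(|W|\min\{\ell,|W|\})$.
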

\begin{proof}
	The first part of the proof is clear in light of Lemma~\ref{lem:M0coms}. The algorithm has query complexity 
\begin{equation}\label{eq:complAl1}
	|C_1\cap W|+2|C_2\cap W|+\cdots +\ell|C_\ell\cap W|\;\;=\;\; \mathcal O(|W|\min\{\ell,|W|\}).
\end{equation} 
\end{proof}
%
\begin{algorithm}
Input: a subset $W\subseteq  [n]$, and oracle for $\Sigma$\;
Set $C_0:=\emptyset$, $\ell=0$, $A_0=W$\;
\While{$A_\ell\neq\emptyset$}{
Take a random vertex $i$ from $A_{\ell}$\; 
Set $\bs v=\Sigma_{A_{\ell}\setminus i,i}$\;
$C_{\ell+1}:={\tt supp}(\bs v)$\;
$A_{\ell+1}=A_{\ell}\setminus C_{\ell+1}$\;
$\ell := \ell + 1$\;
}
Return $C_1,\ldots,C_{\ell}$\;
\label{algo1}
\caption{${\tt Components}(\Sigma,W)$}
\end{algorithm}
Note that the expected complexity of the algorithm is typically smaller than that given in Lemma~\ref{lem:comp} because big components are likely to be detected earlier and so the sum on the left of \eqref{eq:complAl1} is smaller in expectation than $|W|\min\{\ell,|W|\}$. 

\begin{rem}
Directly by definition, Algorithm~\ref{algo1} can be also used to identify the components of $G\setminus S$ as long as $\Sigma^{(S)}$ is $0$-faithful (which holds as long as $\Sigma$ is $|S|$-faithful). In the tree case in Section~\ref{sec:tree} we only require that $\Sigma$ is 1-faithful. 	
\end{rem}

\textbf{Connectedness assumption:} From now on we assume that $G$ is connected. Equivalently, we restrict ourselves to a particular connected component. If the number of connected components is $o(n)$, we can identify them within our computational budget. In addition, the 
separation number of $G$ is the maximum of separation numbers of its connected components. 


\section{Testing if $G$ is a tree}\label{sec:tree}

Recall that our general goal is to test properties of the underlying graph $G$ with $n$ nodes, where the graph is encoded in the support of $\Sigma^{-1}$. In this section we present a simple randomized procedure, which tests whether $G$ is a tree.

Our testing procedure follows a divide-and-conquer approach. First an
approximately central cut-vertex $v^*$ is found (see
Section~\ref{sec:central}) and the procedure descends to the connected
components of $G\setminus \{v^*\}$ (see
Section~\ref{sec:descending}). In Figure~\ref{fig:levels} we provide a
simple example, where $G\setminus \{v^*\}$ has three components
$C_1,C_2,C_3$. Here centrality assures that each of these components
has size at most $2n/3$ with high probability. Now the second step of
the procedure starts, where the same step is repeated in each of the subsets
$B_i=C_i\cup \{v^*\}$.
If a component
$C$ is small enough (green nodes in Figure~\ref{fig:levels}) we query
the whole submatrix $\Sigma_{C,C}$ and run a direct check if this
component is tree supported. We do not descend in this component any
further. It may happen that we get evidence that one of the components
is not a tree (the red node in Figure~\ref{fig:levels}), which may
happen either because there is no cut-vertex in the given component or
the direct check in a small component fails. In this case the
algorithm stops and rejects the null hypothesis.

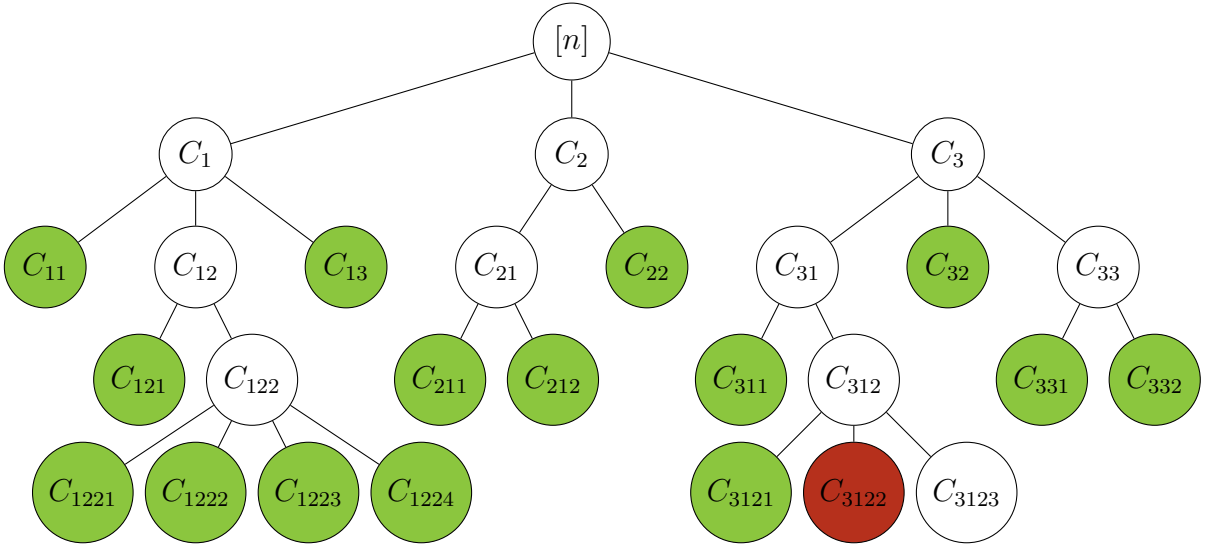
\begin{figure}
\centering
\begin{tikzpicture}[level 1/.style={sibling distance=50mm},
                    level 2/.style={sibling distance=20mm},
                    level 3/.style={sibling distance=15mm},
                    level 4/.style={sibling distance=15mm}]
\node (root) [circle, draw] {$[n]$}
  child {node [circle, draw] {$C_1$}
    child {node [circle, draw,fill=LimeGreen] {$C_{11}$}}
    child {node [circle, draw] {$C_{12}$}
      child {node [circle, draw,fill=LimeGreen] {$C_{121}$}}
      child {node [circle, draw] {$C_{122}$}
      	child {node [circle, draw,fill=LimeGreen] {$C_{1221}$}}
      	child {node [circle, draw,fill=LimeGreen] {$C_{1222}$}}
      	child {node [circle, draw,fill=LimeGreen] {$C_{1223}$}}
      	child {node [circle, draw,fill=LimeGreen] {$C_{1224}$}}
      	}
    }
    child {node [circle, draw,fill=LimeGreen] {$C_{13}$}
    }
  }
  child {node [circle, draw] {$C_2$}
    child {node [circle, draw] {$C_{21}$}
      child {node [circle, draw,fill=LimeGreen] {$C_{211}$}}
      child {node [circle, draw,fill=LimeGreen] {$C_{212}$}}
    }
    child {node [circle, draw,fill=LimeGreen] {$C_{22}$}
    }
  }
  child {node [circle, draw] {$C_3$}
    child {node [circle, draw] {$C_{31}$}
      child {node [circle, draw,fill=LimeGreen] {$C_{311}$}}
      child {node [circle, draw] {$C_{312}$}
      	child {node [circle, draw,fill=LimeGreen] {$C_{3121}$}}
      	child {node [circle, draw,fill=BrickRed] {$C_{3122}$}}
      	child {node [circle, draw] {$C_{3123}$}}}
    }
    child {node [circle, draw,fill=LimeGreen] {$C_{32}$}
    }
    child {node [circle, draw] {$C_{33}$}
      child {node [circle, draw,fill=LimeGreen] {$C_{331}$}}
      child {node [circle, draw,fill=LimeGreen] {$C_{332}$}}
    }
  };
\end{tikzpicture}
\caption{Schematic picture of our divide-and-conquer algorithm.}\label{fig:levels}
\end{figure}

\subsection{Finding a central vertex in $G$}\label{sec:central}
Our procedure starts by finding an approximately central vertex in the graph $G$.  We use a standard definition of the node centrality in trees \cite{jordan1869assemblages}. For $v\in V$, denote by $\cC^{(v)}$ the set of connected components of $G\setminus \{v\}$. Define
\begin{equation}\label{eq:Mv}
M(v)\;:=\;\max_{C\in \cC^{(v)}} |C|.	
\end{equation}
Then $v$ is central if it minimizes $M(v)$ over all $v\in V$. If the
minimum is attained more than once, pick one of the optimal vertices
arbitrarily. It is well known that, if $G$ is a tree, then
$\min_{v\in V} M(v)\leq n/2$ and the minimum is attained at most
twice (see \cite{harary6graph}).

Since $n$ is large, computing $M(v)$ directly exceeds our computational bounds. We need to find a reliable and computationally efficient method to estimate the maximal component size in each $G\setminus \{v\}$. This can be done as follows:
\begin{enumerate}
	\item [(S1)] Sample $m$ nodes uniformly at random $W=\{v_1,\ldots,v_m\}$ without replacement from $V$.
	\item [(S2)] For each $v\in V$ find the restricted decomposition $$\cC^{(v)}(W)\;:=\;\{C\cap W: C\in \cC^{(v)}\mbox{ and }C\cap W\neq \emptyset\}.$$
	This can be done by running {\tt Components}$(\Sigma^{(v)},W\setminus\{v\})$.
	\item [(S3)] Use the size of the largest element in $\cC^{(v)}(W)$ as the estimator of $M(v)$ in \eqref{eq:Mv} by defining \begin{equation}\label{eq:mhat}
		\widehat M(v)=n\max_{C\in \cC^{(v)}}\frac{|C\cap W|}{|W|},\qquad\mbox{and take}\qquad  v^*:=\arg\min_{v\in V} \widehat M(v).
	\end{equation}
\end{enumerate}
 The parameter $m$ is a computational budget parameter, which is required to be not too small (see \eqref{eq:mtree}). The whole procedure is outlined in Algorithm~\ref{algo2} and we now explain it in more detail.

\begin{algorithm}
Input: an oracle for $\Sigma$, $m\in \N$\;
Output: $v^*$ and $C_1,\ldots, C_k$\;
Let $V$ be the index set of the rows/columns of $\Sigma$\;
Sample $m$ nodes at random $W\subseteq V$ without replacement\;
\For{$v\in V$}{
run \texttt{Components}($\Sigma^{(v)},W\setminus \{v\}$) given in Algorithm~\ref{algo1} ($\Sigma^{(v)}$ defined in \eqref{eq:SigmaS})\;
compute $\widehat M(v)$ as in \eqref{eq:mhat}\;
}
\If{$\min_{v\in V} \widehat M(v)>\tfrac{|V|}{2}$}{{\tt BREAK} ($G$ is not a tree)\;}
\Else{Return $v^*=\arg\min_{v\in V} \widehat M(v)$ and components \texttt{Components}($\Sigma^{(v^*)},V\setminus \{v^*\}$)\;}
\label{algo2}
\caption{${\tt FindBalancedPartitionTree}(\Sigma,m)$}
\end{algorithm}

\begin{lem}\label{lem:compl2}Let $V$ be the set of vertices in the current call of the algorithm.	The query complexity of Algorithm~\ref{algo2} is 
	$$\cO(|V|\left(m\min\{m,\Delta\}+\Delta\right)\}),$$
	where  $\Delta$ is the maximal degree of $G$.
\end{lem}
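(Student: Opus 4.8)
The plan is to count, separately, the queries spent in the main \texttt{for} loop and those spent in the single closing call to \texttt{Components}, and then add them. The one thing to keep in mind throughout is that the algorithm never has an oracle for the conditional matrices $\Sigma^{(v)}$; it only queries $\Sigma$. So the first step is to observe that, by the explicit formula \eqref{eq:Sigmav}, any single entry
$$
\Sigma^{(v)}_{ij}\;=\;\Sigma_{ij}-\frac{\Sigma_{iv}\Sigma_{vj}}{\Sigma_{vv}}
$$
can be evaluated with at most four queries to $\Sigma$. Hence any bound on the number of $\Sigma^{(v)}$-queries made by a subroutine translates, up to the constant factor $4$, into the same bound on the number of $\Sigma$-queries, and this constant is absorbed into the $\cO(\cdot)$.

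Next I would bound the number of connected components produced by removing a single vertex. Under the standing assumption that $G$ is connected, each component $C\in\cC^{(v)}$ of $G\setminus\{v\}$ must contain at least one neighbour of $v$ (otherwise $C$ could not have been attached to $v$ in the connected graph $G$); since distinct components are disjoint, this forces $|\cC^{(v)}|\le\deg(v)\le\Delta$. Combined with Lemma~\ref{lem:M0coms}, which under $1$-faithfulness guarantees $\Sigma^{(v)}\in M^0(G\setminus\{v\})$ so that the detected partition coincides with the true one, this lets me apply Lemma~\ref{lem:comp} to each loop iteration: the call \texttt{Components}$(\Sigma^{(v)},W\setminus\{v\})$ runs on a set of size at most $m$ and detects at most $\min\{\Delta,m\}$ components, costing $\cO(m\min\{m,\Delta\})$ queries to $\Sigma^{(v)}$, hence $\cO(m\min\{m,\Delta\})$ queries to $\Sigma$. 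Reading off $\widehat M(v)$ from the returned component sizes needs no further queries. Summing over the $|V|$ vertices $v$ gives $\cO(|V|\,m\min\{m,\Delta\})$ for the loop.

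Finally I would account for the closing call \texttt{Components}$(\Sigma^{(v^*)},V\setminus\{v^*\})$, which now runs on the full vertex set: here $|W|=|V|-1$ and again the number of components is at most $\Delta\le|V|$, so Lemma~\ref{lem:comp} gives $\cO(|V|\min\{\Delta,|V|\})=\cO(|V|\Delta)$ queries. Adding the loop cost and this last cost yields
$$
\cO\bigl(|V|\,m\min\{m,\Delta\}\bigr)+\cO(|V|\Delta)\;=\;\cO\bigl(|V|(m\min\{m,\Delta\}+\Delta)\bigr),
$$
as claimed.

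The only genuinely non-routine step is the component-count bound $|\cC^{(v)}|\le\Delta$; everything else is bookkeeping. I expect this to be the main point, because it is exactly what prevents the factor $\ell$ in Lemma~\ref{lem:comp} from blowing up to order $|V|$, and it is where connectedness of $G$ (and, so that the detected partition is the correct one, $1$-faithfulness via Lemma~\ref{lem:M0coms}) is really used.
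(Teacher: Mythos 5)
Your proof is correct and follows essentially the same route as the paper's: translate $\Sigma^{(v)}$-queries into $\Sigma$-queries via \eqref{eq:Sigmav} (factor $4$), apply Lemma~\ref{lem:comp} with the component count bounded by $\min\{m,\Delta\}$ for each of the $|V|$ loop iterations, and add the $\cO(|V|\Delta)$ cost of the final call to \texttt{Components}. The only difference is that you explicitly justify the bound $|\cC^{(v)}|\leq\deg(v)\leq\Delta$ (each component of $G\setminus\{v\}$ contains a neighbour of $v$ by connectedness), which the paper asserts without proof; this is a welcome addition but not a different argument.
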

\begin{proof}
The algorithm queries $\Sigma$ while running {\tt
  Components}$(\Sigma^{(v)},W\setminus \{v\})$. By
Lemma~\ref{lem:comp}, every call makes $\cO(m \ell)$ queries, where $\ell\leq m$ is the number of elements in $\cC^{(v)}(W)$. Since this number is bounded by $\min\{m,\Delta\}$, we need to query
$\mathcal O(m\min\{m,\Delta\})$ entries of $\Sigma^{(v)}$. Note that
{\color{purple}
by \eqref{eq:Sigmav},
}
to query a single element of $\Sigma^{(v)}$ we query at most four elements of $\Sigma$. Thus, the overall query 
complexity of this step is $\mathcal O(|V|m\min\{m,\Delta\})$, where the extra $|V|$ takes into account that we need to repeat the same operation for
each $v\in V$.

Once the minimizer $v^*$ is found, the partition $\cC^{(v)}(W)$ in step (S2) can be found efficiently
using Algorithm~\ref{algo1}. This has query complexity
$\mathcal O(\ell |V|)=\mathcal O(\Delta |V|)$, where $\ell$ is the number of elements in
$\cC^{(v)}(W)$. Thus, the overall query complexity for the whole algorithm is $\cO(|V|\left(m\min\{m,\Delta\}+\Delta\right)\})$.
\end{proof}

We still need to justify that the optimal separator $v^*$ for $W$, as defined in \eqref{eq:mhat}, remains a good separator of the whole $G$. Recall that, if $G$ is a
tree, then $\min_{v\in V} M(v)\leq n/2$. What is less clear is that
the same holds for $\widehat M(v)$, which explains the \texttt{BREAK}
line in Algorithm~\ref{algo2}. This fact relies on the following basic
result.

\begin{lem}\label{lem:partA}
Consider a tree $T=(V,E)$ with each node $v\in V$ having a weight $w(v)\geq 0$. Let $w(C):=\sum_{v\in C} w(v)$ for any $C\subseteq V$. Then there exists a \emph{central} node $v^*$ such that its removal splits the vertices into disjoint subsets $C_1,\ldots,C_\ell$ with 
\begin{equation}\label{eq:Ccond}
\max_i(w(C_i))\;\leq\;\frac{w(V)}{2}.	
\end{equation}
In particular, taking $w(v)=1$ if $v\in W$ and $w(v)=0$ otherwise, we get that that $\min_{v\in V}\widehat M(v)\leq \tfrac{n}{2}$.\end{lem}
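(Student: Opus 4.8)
The plan is to prove the existence of a central node $v^*$ satisfying \eqref{eq:Ccond} for a tree $T$ with nonnegative vertex weights, and then specialize to the indicator weights to recover the bound on $\widehat M(v)$. The cleanest route is a direct argument on the tree structure using the orientation of edges toward the ``heavier'' side. I would first introduce, for each edge $e=\{u,v\}$, the two components $T_u^e$ and $T_v^e$ obtained by deleting $e$ from $T$; since $T$ is a tree, deleting a single edge disconnects it into exactly two subtrees. Orient $e$ from $u$ to $v$ if the subtree $T_v^e$ on the $v$-side has weight $w(T_v^e) > w(V)/2$ (strictly more than half), and leave $e$ unoriented (or orient arbitrarily) if both sides have weight at most $w(V)/2$. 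The key observation is that an edge cannot be oriented in both directions, since the two sides partition $V$ and cannot both carry strictly more than half the total weight.

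The main step is to locate a suitable vertex. I would argue that there is a vertex $v^*$ with no outgoing oriented edge. Concretely, start at any vertex; if it has an outgoing edge, follow that edge to its head and repeat. Because $T$ is finite and acyclic, this walk cannot revisit a vertex (that would force a cycle in $T$), so it must terminate at some $v^*$ with no outgoing edge. Then for every component $C_i$ of $T \setminus \{v^*\}$, the edge from $v^*$ into $C_i$ was not oriented toward $C_i$, which by definition of the orientation means the $C_i$-side has weight at most $w(V)/2$; hence $w(C_i) \le w(V)/2$, giving \eqref{eq:Ccond}. Taking $w(v)=\mathbf 1[v\in W]$ yields $w(V)=|W|=m$, so $\max_i w(C_i\cap W)\le m/2$, and dividing by $|W|=m$ and multiplying by $n$ inside the definition \eqref{eq:mhat} gives $\widehat M(v^*)\le n/2$, whence $\min_{v\in V}\widehat M(v)\le n/2$.

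The only subtle point, which I would verify carefully, is the termination and well-definedness of the orientation. The assertion that no edge receives two conflicting orientations is immediate from the fact that the two subtrees partition $V$, so at most one side exceeds half the weight. Termination of the directed walk follows because $T$ has no cycles: if the walk returned to a previously visited vertex, the traversed edges would close a cycle, contradicting that $T$ is a tree. I expect this acyclicity-based termination argument to be the main (albeit mild) obstacle, since it is where the tree hypothesis is genuinely used; everything else is bookkeeping. An equivalent and perhaps more streamlined phrasing would choose $v^*$ to minimize $\max_{C\in\cC^{(v)}} w(C)$ directly and show that any minimizer satisfies the bound by a local-exchange argument: if some component off $v^*$ had weight exceeding $w(V)/2$, moving $v^*$ one step into that component would strictly decrease the maximum, contradicting minimality. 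I would present whichever of these two formulations integrates most smoothly, as they are logically interchangeable.
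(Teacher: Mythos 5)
Your proposal is correct and is essentially the paper's own argument: the paper walks from an arbitrary vertex toward the component of weight exceeding $w(V)/2$ and argues this walk never revisits a vertex and must terminate, which is exactly your oriented-edge walk to a sink, and both hinge on the same key fact that the two sides of an edge cannot both carry more than half the total weight. One small point of care: a revisit does not literally ``force a cycle in $T$'' (a walk could simply traverse an edge back and forth); to rule out revisits you must combine acyclicity with the fact that no edge is oriented in both directions --- a fact your orientation construction already supplies, so the fix is a one-line remark that any closed walk in a tree must traverse some edge in both directions.
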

%
%
%
\begin{figure}
	\includegraphics[scale=.6]{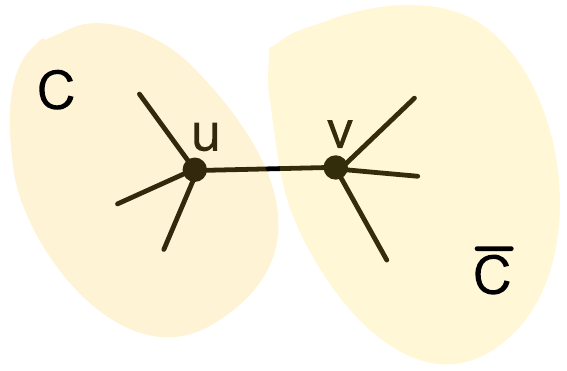}
	\caption{Illustration of the proof of Proposition~\ref{lem:partA}.}\label{fig:proofW}
\end{figure}
\begin{proof}
	Take any vertex $v$. If $v$ satisfies the condition \eqref{eq:Ccond}, we are done. So suppose $w(C)> \frac{w(V)}{2}$ for some component $C$ of $T\setminus\{v\}$. Let $u$ be the neighbor of $v$ that lies in $C$, and let $\overline C$ be the complement of $C$. Now consider the decomposition induced by $u$ and note that $\overline C$ becomes one of the corresponding connected components of $T\setminus \{u\}$; see Figure~\ref{fig:proofW}. We have  $w(\overline C)<\frac{w(V)}{2}$ for otherwise $w(V)=w(C)+w(\overline C)>w(V)$. If there is some component $C'$ of $u$ such that $w(C')>\tfrac{w(V)}{2}$ then  $w(C')\leq w(C)-w(u)\leq w(C)$. We can now  apply the same argument as above to $u$ making sure that, if $u$ does not satisfy \eqref{eq:Ccond}, we move to a uniquely defined neighbor but never returning to a previously visited vertex. In every such move, the size of the maximal component cannot increase and eventually it must decrease.
\end{proof}

Proposition~\ref{lem:partA} implies that, if
$\min_{v\in V} \widehat M(v)>\tfrac{n}{2}$, we get an immediate
guarantee that $G$ is not a tree. Moreover, if $\min_{v\in V} M(v)$ is
bounded away from $\tfrac{n}{2}$, say if
$\min_{v\in V} M(v)>\tfrac23 n$, then
$\min_{v\in V} \widehat M(v)>\tfrac{n}{2}$ with high probability if
$m$ is sufficiently large. 
\begin{lem}\label{lem:treecomp} 
 For any $v\in V$ it holds that if $ M(v)>\tfrac23n$ then 
 $$
 \P(\widehat M(v)\leq \tfrac{n}{2})\;\leq\;\,e^{-\tfrac{m}{18}\tfrac{n-1}{n-m}}\;\leq\;e^{-\tfrac{m}{18}}.
 $$	
 Thus, by the union bound,
 $$
 \P(\exists v \mbox{ s.t. }M(v)>\tfrac{2n}{3}\mbox{ and }\widehat M(v)\leq \tfrac{n}{2})\;\leq\;n e^{-\tfrac{m}{18}}.
 $$
 \end{lem}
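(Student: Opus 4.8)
The plan is to reduce the event $\{\widehat M(v)\le \tfrac n2\}$ to a one-sided tail bound for a single hypergeometric count and then apply a concentration inequality for sampling without replacement.

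First I would localize the deviation to one large component. If $M(v)>\tfrac23 n$, then $G\setminus\{v\}$ has a (necessarily unique) connected component $C^*\in\cC^{(v)}$ with $|C^*|>\tfrac23 n$; write $p^*:=|C^*|/n>\tfrac23$. Since $\widehat M(v)=n\max_{C}\tfrac{|C\cap W|}{|W|}\ge n\,\tfrac{|C^*\cap W|}{m}$, the event $\{\widehat M(v)\le \tfrac n2\}$ forces $\tfrac{|C^*\cap W|}{m}\le \tfrac12$. Because $C^*\subseteq V\setminus\{v\}$, the count $X:=|C^*\cap W|$ is exactly hypergeometric with population size $n$, $|C^*|$ favorable elements, and sample size $m$, so $\E[X/m]=p^*>\tfrac23$ (the possible membership of $v$ in $W$ is harmless, as $v\notin C^*$). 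Hence
$$\P\big(\widehat M(v)\le \tfrac n2\big)\;\le\;\P\Big(\tfrac Xm\le \tfrac12\Big)\;\le\;\P\Big(\tfrac Xm-p^*\le -\tfrac16\Big),$$
since $\tfrac12-p^*<\tfrac12-\tfrac23=-\tfrac16$.

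Next I would bound this lower tail with $\epsilon=\tfrac16$. Here I would invoke a Hoeffding-type inequality for sampling without replacement: the sample mean $X/m$ of an $m$-subset drawn without replacement from a two-valued population of size $n$ is sub-Gaussian with variance proxy $\tfrac14\cdot\tfrac1m\cdot\tfrac{n-m}{n-1}$ (the $\tfrac{n-m}{n-1}$ being the finite-population correction and $\tfrac14\ge p^*(1-p^*)$ the Bernoulli variance bound). This yields
$$\P\Big(\tfrac Xm-p^*\le -\epsilon\Big)\;\le\;\exp\!\Big(-2m\epsilon^2\,\tfrac{n-1}{n-m}\Big),$$
and with $\epsilon=\tfrac16$ the exponent becomes $\tfrac{m}{18}\tfrac{n-1}{n-m}$, giving the first displayed bound. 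Since $m\ge 1$ we have $\tfrac{n-1}{n-m}\ge 1$, which collapses this to $e^{-m/18}$ and proves the per-vertex statement.

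Finally, the union bound over the at most $n$ vertices $v$ (only those with $M(v)>\tfrac23 n$ contribute) multiplies the per-vertex bound by $n$, giving $n\,e^{-m/18}$, as claimed. The one genuinely delicate point is the concentration step: I must use the without-replacement (hypergeometric) inequality rather than the crude multiplicative Chernoff bound, since only the finite-population correction $\tfrac{n-1}{n-m}$ produces the stated constant $\tfrac1{18}=2\cdot(\tfrac16)^2$ together with the sharp dependence that forces the probability to vanish as $m\to n$; the reduction to a single component and the union bound are routine.
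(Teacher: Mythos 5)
Your proof is correct and follows essentially the same route as the paper's: reduce the event to a hypergeometric tail bound for the single large component, apply the Hoeffding--Serfling inequality for sampling without replacement with deviation $\tfrac16$ (giving the constant $2\cdot(\tfrac16)^2 = \tfrac1{18}$), and finish with a union bound over vertices. The only cosmetic difference is bookkeeping of the population: the paper excludes $v$ and samples from a population of size $n-1$, which is exactly what produces the correction factor $\tfrac{n-1}{n-m}$, whereas with your population of size $n$ Serfling's bound gives the marginally weaker factor $\tfrac{n}{n-m+1}$ --- still at least $1$, so your argument delivers the same $e^{-m/18}$ bound and the lemma's conclusion.
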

 \begin{proof}
   To estimate $\widehat M(v)$ we sampled $m=|W|$ nodes without
   replacement from $\ell=\deg(v)$ buckets of size $|C_1|,\ldots,|C_\ell|$
   where $\cC^{(v)}=\{C_1,\ldots,C_\ell\}$ (for simplicity we assume $v$
   cannot be part of the sample but the proof can be adjusted). By
   assumption the size of the maximal component, say $C$, is at least
   $\tfrac23 n$. Moreover, the distribution of $|W\cap C|$ is
   hypergeometric.
 We use the classical tail inequality 
 of \cite{Hoe63} for the hypergeometric distribution.
Let $N=n-1$ be the size of the population (nodes in
   $V\setminus \{v\}$) and let $\{x_1,\ldots,x_N\}\in \{0,1\}^N$ be
   such that $s$ of them are 0 and $N-s$ are 1. This is an indicator
   of the fact that a particular node lies \emph{outside} of the big
   component $C$. By assumption $s>\tfrac 23 (N+1)$ and so
	$$\mu\;:=\;\frac 1N\sum_{i=1}^N x_i\;=\;\frac{N-s}{N}\;<\;\frac13.$$
	Consider a sample $(I_1,\ldots,I_m)$ drawn without replacement from $\{1,\ldots,N\}$ and let $S_m=\tfrac1m\sum_{i=1}^m x_{I_i}$. 
By \cite{Hoe63} (see also Corollary~1.1 in \cite{serfling1974probability}),
\begin{equation}\label{eq:hyper2}
	\P(S_m\geq \tfrac{m}{2})\;=\;\P(S_m-m\mu\geq m(\tfrac{1}{2}-\mu))\;\leq\;\exp\left\{-2m(\tfrac{1}{2}-\mu)^2\tfrac{n-1}{n-m}\right\}.	
\end{equation}
We thus have
$$
\P(\widehat M(v)\leq \tfrac{n}{2})\;=\; \P(|C\cap W|\leq \tfrac{m}{2})\;=\;\P(S_m>\tfrac{m}{2})\;\leq\; \P(S_m\geq \tfrac{m}{2}).
$$
The last expression can be bounded using \eqref{eq:hyper2}. Since, $\mu<1/3$, we finally get
$$
\P(\widehat M(v)\leq \tfrac{n}{2})\;\leq \;  \exp\left\{-\tfrac{m}{18}\tfrac{n-1}{n-m}\right\}.
$$
\end{proof}

\subsection{Descending into sub-components}\label{sec:descending}

After completing steps (S1)-(S3), our procedure finds $v^*$, which optimizes $\widehat M(v)$ over $v\in V$, and the corresponding components $C_1, \ldots, C_\ell$. If $\widehat M( v^*)> n/2$, it stops with a guarantee that $G$ cannot be a tree (like the red node in Figure~\ref{fig:levels}). If $\widehat M(v^*)\leq  n/2$ it  descends into the connected components of $G\setminus \{v^*\}$. By this we mean that, for every $C\in \cC^{(v^*)}$, we apply our procedure to the smaller matrix $\Sigma_{B,B}$ with $B=C\cup \{v^*\}$. For each of the components $B=C\cup \{v^*\}$ for $C\in \cC^{(v^*)}$  we first check if $|B|\leq m$. If yes (the green nodes in Figure~\ref{fig:levels}), we simply query the whole matrix $\Sigma_{B,B}$, invert it, and identify the underlying subgraph directly. On the other hand, if $|B|>m$ , we run on it Algorithm~\ref{algo2} proceeding recursively. The whole procedure is outlined in Algorithm~\ref{algo3}. 

To prove correctness of this approach we argue that the structure of the underlying induced subgraph $G_{B}$ can be still directly read from the submatrix $\Sigma_{B,B}$. We formulate this result in greater generality than what we need now. This is a matrix-algebraic version of the main result in \cite{frydenberg1990marginalization}.
\begin{lem}\label{lem:marg}Let $G$ be any graph and let $C$ be one of the connected components of $G\setminus S$. Denote $B=C\cup S$ and assume $\Sigma\in M(G)$. Then $\Sigma_{B,B}\in M(G_B)$ if and only if for every pair $i,j\in S$,  not connected by an edge, $(\Sigma_{B,B})^{-1}_{ij}=0$. In particular, if $S$ is a clique then $\Sigma_{B,B}\in M(G_B)$.
\end{lem}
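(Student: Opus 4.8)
The plan is to reduce the whole statement to a single Schur-complement computation for the precision matrix $K := \Sigma^{-1}$. Write $A := V\setminus B$, so that $V$ is the disjoint union of $C$, $S$ and $A$, and order the coordinates in the three blocks $C,S,A$. The one external fact I would invoke is the standard block-inversion (marginalization) identity for the principal submatrix $\Sigma_{B,B}$, namely
\[
(\Sigma_{B,B})^{-1}\;=\;K_{B,B}-K_{B,A}\,(K_{A,A})^{-1}\,K_{A,B},
\]
i.e. the inverse of the marginal covariance is the Schur complement of $K_{A,A}$ in $K$ (this is the matrix form of \cite{frydenberg1990marginalization}). Note also that $\Sigma_{B,B}$ is automatically positive definite as a principal submatrix of $\Sigma\in\S^n_+$, so membership in $M(G_B)$ is purely a statement about the zero pattern of $(\Sigma_{B,B})^{-1}$.

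The key structural input comes next. Since $C$ is a connected component of $G\setminus S$ and $A$ is the union of the remaining components, no edge of $G$ joins $C$ to $A$: any such edge avoids $S$ and would therefore survive in $G\setminus S$, contradicting that $C$ and $A$ lie in different components. Because $\Sigma\in M(G)$, this forces $K_{C,A}=0$, so the $C$-rows of $K_{B,A}$ vanish. Consequently the correction term $K_{B,A}(K_{A,A})^{-1}K_{A,B}$ is supported entirely on the $S\times S$ block, and $(\Sigma_{B,B})^{-1}$ agrees with $K_{B,B}$ outside that block; explicitly its $(C,C)$, $(C,S)$ and $(S,C)$ blocks equal $K_{C,C}$, $K_{C,S}$ and $K_{S,C}$, respectively.

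With this in hand the equivalence reads off directly. For a non-adjacent pair $i\neq j$ in $B$ that is \emph{not} contained in $S$, the entry $(\Sigma_{B,B})^{-1}_{ij}$ equals $K_{ij}$, which vanishes because $\Sigma\in M(G)$ and, $G_B$ being an induced subgraph, $ij\notin G_B$ is the same as $ij\notin E$. Thus the defining zero conditions for $\Sigma_{B,B}\in M(G_B)$ hold automatically for every such pair, and the only conditions that can fail are those for non-adjacent pairs $i,j\in S$. Therefore $\Sigma_{B,B}\in M(G_B)$ holds if and only if $(\Sigma_{B,B})^{-1}_{ij}=0$ for every non-adjacent pair $i,j\in S$: the forward direction is immediate from the definition of $M(G_B)$, and the reverse is exactly the computation just described. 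When $S$ is a clique there are no non-adjacent pairs in $S$, so the condition is vacuous and $\Sigma_{B,B}\in M(G_B)$ follows.

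The only delicate point, and the one I would be most careful about, is the localization of the Schur correction to the $S\times S$ block: everything hinges on $K_{C,A}=0$, which in turn uses precisely that $C$ is a \emph{full} connected component of $G\setminus S$ and not merely some subset separated from $A$. I would also double-check that the marginalization identity is applied with $K=\Sigma^{-1}$ rather than with $\Sigma$ itself, since the roles of covariance and precision are easy to conflate here and the entire argument depends on reading zeros off the correct matrix.
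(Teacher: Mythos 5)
Your proof is correct and follows essentially the same route as the paper's: both rest on the block-inversion identity $(\Sigma_{B,B})^{-1}=K_{B,B}-K_{B,A}K_{A,A}^{-1}K_{A,B}$ with $K=\Sigma^{-1}$, combined with the observation that $K_{C,A}=0$ (no edges between $C$ and $A=V\setminus B$ since $C$ is a full component of $G\setminus S$), so that the Schur correction lives only in the $S\times S$ block and all other off-pattern entries reduce to entries of $K$, which vanish by $\Sigma\in M(G)$. Your block-wise phrasing of the vanishing correction and the explicit remark on positive definiteness of $\Sigma_{B,B}$ are minor presentational refinements of the same argument.
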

\begin{proof}
The right implication follows from the definition. For the left implication, denote $A=V\setminus B$ and $K=\Sigma^{-1}$.	 The standard formula for block matrix inversion gives (see Section~0.7.3, \cite{horn2012matrix})
\begin{equation}\label{eq:KBBinv}
(\Sigma_{B,B})^{-1}\;=\;K_{B,B}-K_{B,A}K_{A,A}^{-1}K_{A,B}.	
\end{equation}
We need to show that for any two $i,j\in B$ that are not connected by an edge, the corresponding entry is zero. If $i,j\in S$, this follows by assumption, so assume that one of them, say $i$, does not lie in $S$. We have
$$
(\Sigma_{B,B})^{-1}_{ij}\;=\;K_{ij}-K_{i,A}K_{A,A}^{-1}K_{A,j}.
$$
Since $i,j$ are not connected by an edge and $\Sigma\in M(G)$, we have $K_{ij}=0$. Moreover, since $i\in C$, it is also not connected to any edge in $A$ and so $K_{i,A}=0$. This completes the argument.
\end{proof}


Our testing procedure is performed by running
\texttt{TestTree}$(G,\Sigma,m)$; see Algorithm~\ref{algo3}. The
following result bounds the query complexity of this procedure and it
shows that it never breaks if $G$
is a tree. 
Even if $G$ is not a tree, the algorithm always concludes correctly.
The query complexity depends on $\Delta$, the maximal degree of
$G$. Note that the algorithm does not need to know $\Delta$ though its 
running time guarantee gets worse for large $\Delta$.

\begin{thm}\label{th:main1}
Let $G$ be a connected graph and let $\Sigma\in M^1(G)$. Fix $\epsilon<1$ and let 
\begin{equation}\label{eq:mtree}
m\;= \; \left\lceil 18\log\left(\frac{5n^2}{\epsilon}\log(n)\right) \right\rceil	
\end{equation} 
be a parameter of Algorithm~\ref{algo3}.  
The algorithm correctly identifies whether $G$ is a tree.
Moreover, with probability at least $1-\epsilon$, 
Algorithm~\ref{algo3}  runs with total query complexity	
$$
\cO(n\log(n)(m\min\{m,\Delta\}+\Delta)=\cO(n\log(n) (\log(n/\epsilon)\min\{\log(n/\epsilon),\Delta\}+\Delta))
$$
where $\Delta$ is the maximum degree of any vertex in $G$.
\end{thm}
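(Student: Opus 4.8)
The plan is to prove correctness and query complexity separately, observing that correctness is \emph{deterministic} once $\Sigma\in M^1(G)$, while only the complexity bound is probabilistic. The first ingredient is that every recursive call operates on a genuinely faithful instance. When the algorithm descends from a vertex set $B$ through a cut vertex $v^*$ into $B'=C\cup\{v^*\}$, where $C$ is a connected component of $G_B\setminus\{v^*\}$, I would show $\Sigma_{B',B'}\in M^1(G_{B'})$. Membership in $M(G_{B'})$ is Lemma~\ref{lem:marg}, since the singleton $\{v^*\}$ is a clique. Faithfulness follows from the invariant that every simple path of $G$ between two vertices of the current subset stays inside that subset; this holds at the root and is propagated because $C$ is an entire connected component of the removal of $v^*$, so no path can leave $B'$ and return without revisiting $v^*$. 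Under this invariant, separation of subsets of $B'$ coincides in $G$ and in $G_{B'}$, and the relevant submatrices of $\Sigma_{B',B'}$ are submatrices of $\Sigma$, so $1$-faithfulness is inherited (and with it, via Lemma~\ref{lem:M0coms}, the validity of every component computation $\Sigma^{(v)}\in M^0(G_B\setminus\{v\})$).

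With faithfulness in hand I would treat the two directions. If $G$ is a tree then every $G_B$ is a tree, so Lemma~\ref{lem:partA} gives $\min_v\widehat M(v)\le |B|/2$ at every node \emph{for any} sample $W$; hence the \texttt{BREAK} line never fires, and the sampling-free check on each small tree-component accepts, so $G$ is never rejected. If $G$ has a cycle, I would show the cycle is preserved under descent: the cycle vertices other than $v^*$ lie in a single component $C$ (a cycle minus a vertex is connected), and $v^*$ has both of its cycle-neighbours in that same $C$, so the whole cycle reappears in $B'=C\cup\{v^*\}$. Since a genuine cut vertex yields at least two components, each descent strictly decreases $|B|$, the recursion terminates, and the cycle is driven into a small component, where the exact check breaks (e.g.\ every vertex of a $k$-cycle has $M(v)=k-1>k/2$). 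Thus non-trees are always rejected.

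For the complexity I would first pin down the per-node cost: by Lemma~\ref{lem:compl2} each internal call costs $\cO(|B|(m\min\{m,\Delta\}+\Delta))$, while each leaf costs $\cO(|B|^2)\le\cO(m|B|)$ with $|B|\le m$, which is dominated by the same expression since $\min\{m,\Delta\}\ge1$. Hence the total query complexity is $\cO(m\min\{m,\Delta\}+\Delta)\cdot\sum_{\text{nodes}}|B|$, and it remains to bound $\sum_{\text{nodes}}|B|$. I would define the good event that at every node and every vertex the estimate is accurate in the sense of Lemma~\ref{lem:treecomp}, i.e.\ $M(v)>\tfrac23|B|$ implies $\widehat M(v)>|B|/2$. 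Exposing the recursion level by level and applying Lemma~\ref{lem:treecomp} with a union bound over the $\cO(n)$ vertices present at each of the $\cO(\log n)$ levels, the choice \eqref{eq:mtree} of $m$ (which forces $e^{-m/18}\le \epsilon/(5n^2\log n)$) keeps the total failure probability below $\epsilon$. On the good event every descent has largest child of size at most $\tfrac23|B|+1$, so the recursion depth is $\cO(\log n)$.

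The remaining and most delicate step is to bound the total size $\sum_{\text{nodes}}|B|$ across the recursion tree by $\cO(n\log n)$. The obstacle is that a split at $v^*$ copies $v^*$ into all $\ell$ children, so sizes are \emph{not} conserved: the children of $B$ have total size $|B|-1+\ell$. I would argue that at each fixed depth the subproblems overlap only in separator vertices, and that the number of recursion nodes is $\cO(n)$ (each internal node has at least two children, and distinct leaves can be charged to essentially disjoint ``new'' parts of the vertex set), so the total size at each depth is $\cO(n)$; multiplying by the $\cO(\log n)$ depth gives $\sum_{\text{nodes}}|B|=\cO(n\log n)$, which simultaneously controls the number of $\widehat M$-evaluations used in the union bound above. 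Combining with the per-node cost yields the stated $\cO(n\log n(m\min\{m,\Delta\}+\Delta))$ bound, and substituting $m=\Theta(\log(n/\epsilon))$ gives the second displayed form. I expect this node-counting and separator bookkeeping, together with making the level-by-level union bound fully rigorous over the randomly shaped recursion tree, to be the main difficulty; the faithfulness inheritance and the cycle-propagation argument, while essential, are comparatively routine.
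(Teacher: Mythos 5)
Your proposal is correct and takes essentially the same route as the paper's proof: per-call cost via Lemma~\ref{lem:compl2}, a good event built from Lemma~\ref{lem:treecomp} with a union bound over the at most $n$ components on each of the $\cO(\log n)$ recursion levels (the choice \eqref{eq:mtree} of $m$ forcing total failure probability at most $\epsilon$), and a per-level size bound multiplied by the depth. The bookkeeping you flag as the main difficulty is dispatched in the paper by a one-line telescoping observation -- the children of a component $B$ satisfy $\sum_C(|C|+1)\le 2|B|$, and propagating this level by level keeps the total size of each level at $\cO(n)$ without any need to bound the total number of recursion nodes by $\cO(n)$ -- while, conversely, your treatment of correctness (faithfulness inheritance and cycle preservation under descent) is more explicit than the paper's, which leaves those points largely implicit.
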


The upper bound shows that, whenever $\Delta =O(n^{1-\gamma})$ for
some $\gamma >0$, the algorithm requires sub-quadratic query complexity.
In particular, when the maximum degree is bounded, the query
complexity is quasi-linear. It is not difficult to see that, without
any bound on $\Delta$, one cannot hope for nontrivial query
complexity. For example, in order to test whether $G$ is a star of
degree $n-1$ or it is a star with one extra edge added, any algorithm
needs $\Omega(n^2)$ query complexity. Instead of giving a formal
statement, we refer the reader to \cite{lugosi2021learning} for
related arguments in the context of structure learning.

\begin{proof}
By Lemma~\ref{lem:compl2}, Algorithm~\ref{algo2} applied initially to
  the whole graph $G$, has query complexity
  $\cO(n(m\min\{m,\Delta\}+\Delta))$. In the second step, we run the same
  procedure on each of the components $C\cup \{v^*\}$ separately (with
  the same $m$) with complexity $\cO((|C|+1)(m\min\{m,\Delta\}+\Delta))$. Since
  $\sum_C (|C|+1)\leq 2n$, the total query complexity adds up giving
  again $\cO(n(m\min\{m,\Delta\}+\Delta))$. Note that some components can be
  small (less than $m$) but this does not affect our upper bound. The
  same argument can be now applied to each level of the recursion
  tree, where we simply bound the number of components in each level
  by $n$.
	
	It remains to control the number of times this procedure is executed on subsequent sub-components. Referring to the underlying algorithm tree (like in Figure~\ref{fig:levels}), this corresponds to the number of levels in this tree and the number of components in each level. 	
	
	Let $\cE_\ell$ be the event that all components in the $\ell$-th level of the recursion tree have size bounded by $(\tfrac23)^\ell n$. Let $$\ell^*\;:=\;\left\lceil \frac{\log\left(\frac{n}{m}\right)}{\log\left(\frac{3}{2}\right)}\right\rceil\;\leq\;5\log\left(\frac{n}{m}\right).$$
	Under $\cE_{\ell^*}$, each component in the $\ell^*$-th level of the tree has size bounded by $m$. After this, one more run of the algorithm will give a definite answer to whether $G$ is a tree or not. The total query complexity is then $\cO((\ell^*+1)n (m\min\{m,\Delta\}+\Delta))=\cO(n\log(n) (m\min\{m,\Delta\}+\Delta))$ (which is the claimed complexity). 
	
	We now show that the event $\cE_{\ell^*}$ holds with probability at least $1-\epsilon$. The probability of the complement can be bounded by the probability that in at least one instance the central vertex $v^*$ output by Algorithm~\ref{algo2} does not give a balanced split of the corresponding component. Note that the number of components at each level is bounded by $n$. Moreover, using Lemma~\ref{lem:treecomp}, in each component $C$
	$$\P\left(\exists v\in C \mbox{ s.t. } M(v)>\tfrac{2|C|}{3}\mbox{ and }\widehat M(v)\leq \tfrac{|C|}{2}\right)\;\leq\; |C|e^{-\tfrac{m}{18}}.$$

	By the union bound, the probability that in at least one call we do not get a balanced split can be bounded by 
	$$
	n^2 \ell^* e^{-m/18}\;\leq\; n^2  {5\log\left(\frac{n}{m}\right)} \frac{\epsilon}{5 n^2 \log(n)}\;\leq\; \frac{\log\left(\frac{n}{m}\right)}{\log(n)}\epsilon\;\leq\;\epsilon,
	$$ 	
	which concludes the proof.
\end{proof}

\begin{algorithm}
Input: an oracle for $\Sigma$, $m\in \N$\;
\If{$|V|\leq m$}{run a direct test}
\Else{run \texttt{FindBalancedPartitionTree}$(\Sigma,m)$ to get an optimal separator $v^*$ and the corresponding components $C_1,\ldots,C_\ell$\;
\For{$i=1,\ldots,\ell$}{
$B:=C_i\cup \{v^*\}$\;
run ${\tt TestTree}(\Sigma_{B,B},m)$}}
\label{algo3}
\caption{${\tt TestTree}(\Sigma,m)$}
\end{algorithm}

\section{Testing small separation numbers}\label{sec:treewidth}

In this section we generalize the procedure for testing trees to a significantly richer class
of graphs, characterized by their separation number,
as given in \eqref{eq:test}. 
The separation
number $\sn(G)$ is defined formally as the smallest integer $s$ such
that for every subset $W\subseteq V$, with $|W|\geq k+2$, there is a partition of $W$ into
three sets $S,A,A'$, such that $|S|\leq k$,  $A$ and $A'$ are non-empty,
$\max\{|A|,|A'|\}\leq \tfrac23 |W|$ and $S$ separates $A$ and $A'$ in
the subgraph $G_W$. Such a separator is called a
$(\tfrac23,k)$-separator of $W$. In an analogous way we define
$(\alpha,k)$-separators for any $\alpha\in [\tfrac23,1)$.

The value of the separation number reveals fundamental structural properties of the graph, important
for understanding the global dependence structure in graphical models.
Also, this notion is closely related to 
another fundamental parameter of the graph, the \emph{treewidth $\tw(G)$}; see \cite{robertson1986graph} or \cite{bodlaender1998partial} for more details. 
Indeed, separation number and treewidth are within constant factors of each other:
\begin{prop}[\cite{dvovrak2019treewidth}]
	For every graph,  $\sn(G)\leq \tw(G)+1 \leq 15\sn(G)$. 
\end{prop}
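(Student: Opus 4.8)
The plan is to prove the two inequalities $\sn(G)\le\tw(G)+1$ and $\tw(G)+1\le 15\,\sn(G)$ separately; they encode the two implications ``bounded treewidth forces balanced separators'' and ``balanced separators force bounded treewidth''.

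For the first inequality, set $w=\tw(G)$ and fix any $W\subseteq V$ with $|W|\ge w+3$. I would take an optimal tree decomposition $(T,\{B_t\})$ of $G$ and intersect every bag with $W$, obtaining a tree decomposition of the induced subgraph $G_W$ of width at most $w$. Placing a unit weight on each vertex of $W$ and pushing these weights onto the nodes of $T$, a standard centroid argument yields a node $t^*$ such that, after deleting $S:=B_{t^*}\cap W$, every component of $G_W\setminus S$ carries weight at most $\tfrac12|W|$. Since $|S|\le w+1$ and each surviving component has size at most $\tfrac12|W|$, one can group the components into two nonempty classes $A,A'$ each of size at most $\tfrac23|W|$ (if the largest component already exceeds $\tfrac13|W|$, put it alone; otherwise fill one side greedily until it passes $\tfrac13|W|$). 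This exhibits a $(\tfrac23,w+1)$-separator of $W$, so $\sn(G)\le w+1$. The only delicate point is the nonemptiness of both sides, which must be checked separately for the smallest admissible sets $W$, where the separator may occupy almost all of $W$.

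The reverse inequality is the substantial direction, and I would prove it by constructing a tree decomposition recursively out of balanced separators. The natural object to recurse on is a pair $(X,\partial)$, where $X\subseteq V$ and the \emph{boundary} $\partial\subseteq X$ is a small set across which $X$ is separated from $V\setminus X$; the initial problem is $(V,\emptyset)$. For each such pair one applies the separation property with $k=\sn(G)$ to produce a set $S$, $|S|\le k$, splitting $X$ in a balanced way, emits the bag $\partial\cup S$, and recurses on the pieces, each equipped with a new boundary contained in $\partial\cup S$. The crux is to stop the boundaries from growing: a recursive call keeps only the part of $\partial$ adjacent to its piece, together with $S$, and the $\tfrac23$-balance is exactly what guarantees that enough of the old boundary is shed for the boundary size to remain bounded by a fixed multiple of $k$. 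I expect this boundary bookkeeping to be the main obstacle, because a single separator must be balanced \emph{simultaneously} with respect to the size of $X$ (so that the recursion terminates) and with respect to the current boundary (so that the bags stay small); reconciling these two demands---by applying the separation property to a suitably weighted version of $X$ and carefully tracking how $\partial$ is distributed among the children---is where the real work lies and where the explicit constant $15$ is extracted. Once the boundaries are shown to stay within a constant multiple of $k$, the emitted bags have size $O(k)$, the recursion tree is the decomposition tree, and verifying the three tree-decomposition axioms is routine.
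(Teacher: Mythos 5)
First, a point of reference: the paper itself gives no proof of this proposition --- it is quoted as a known theorem from \cite{dvovrak2019treewidth} --- so your attempt must be judged against the published argument, not against anything in this text. Your sketch of the easy direction $\sn(G)\le\tw(G)+1$ is the standard one and is essentially sound; the nonemptiness issue you defer is real (if $G_W\setminus S$ has a single component your partition does not exist), but it closes cleanly if you use the sharper form of the centroid lemma: some bag $S$ of the induced tree decomposition of $G_W$ leaves every component of $G_W\setminus S$ with at most $\tfrac12|W\setminus S|$ vertices of $W$. Since $|W|\ge \tw(G)+3>|S|$, no single component can then contain all of $W\setminus S$, so at least two components meet $W$, and the greedy grouping into nonempty $A,A'$ of size at most $\tfrac23|W|$ goes through.

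The genuine gap is in the second inequality, and it is not a bookkeeping issue --- it is the entire content of the theorem. Your recursion needs, at the step with current piece $X$ and boundary $\partial$, a set $S$ of size $O(k)$ that is balanced \emph{with respect to $\partial$} (so that boundaries stay bounded via the fixed point of $x\mapsto\tfrac23x+k$) and that separates \emph{in the graph $G_X$} (so that the emitted bags really form a tree decomposition). The hypothesis $\sn(G)\le k$ supplies neither: for a chosen subset $W$ it yields a separator that is balanced only with respect to the cardinality $|W|$ and that separates only \emph{inside the induced subgraph $G_W$}. Taking $W=X$ gives termination but no control of $\partial$ --- the separator may leave essentially all of $\partial$ on one side, so boundaries grow by $k$ per level and the construction yields only $\tw(G)=O(k\log n)$, not $O(k)$. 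Taking $W=\partial$ (or $\partial$ padded with a few extra vertices) is useless, because the separation then holds only in $G_\partial$, which may have no edges at all, and says nothing about paths through $X\setminus\partial$; such an $S$ need not separate anything in $G_X$. Your phrase ``applying the separation property to a suitably weighted version of $X$'' presupposes a weighted separation property that the hypothesis simply does not contain; upgrading unweighted, induced-subgraph balanced separators to weighted, whole-graph ones with only a constant-factor loss is equivalent to the theorem you are trying to prove, and that upgrade is precisely what the argument of \cite{dvovrak2019treewidth} accomplishes (by quite different means, from which the constant $15$ emerges). As written, your plan for the hard direction assumes the conclusion at its crucial step.
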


Conceptually, our testing algorithm is analogous to the one presented
for trees. Thus, the rest of this section is organized in a similar
way as Section~\ref{sec:tree}. In Section~\ref{sec:central2} we
provide a procedure to efficiently find a small balanced
separator. Having such a separator $S$, the algorithm descends in the
components of $G\setminus S$. This is explained in
Section~\ref{sec:descending2}. Then the procedure proceeds
recursively and in Sections~\ref{sec:marg} and \ref{sec:cond} we consider two ways this can happen.

\subsection{Finding a balanced separator}\label{sec:central2}

In order to identify a small balanced separator in a graph $G$, we adopt
a randomized approach similar to that used for trees but with stronger genericity conditions. Our strategy
involves sampling a subset of vertices, denoted as $W$, and then
searching for a compact balanced separator of $W$. The approach
developed in \cite{feige2006finding} provides a basis for asserting
that even for relatively small values of $m=|W|$, a balanced separator
of $W$ will, with high probability, serve as a balanced separator for
the entire graph. However, it is important to note a critical
distinction from the tree case: we cannot perform an exhaustive search
across all potential separators of size $\leq k$ because this space is
too large. For this, we employ the techniques of
\cite{lugosi2021learning}, which leverage the algebraic structure of
the model to identify a minimal separator for $W$ with small query
complexity. Our approach remains within the subquadratic computational
budget. We now describe in detail how this is performed.


To describe our procedure to find a balanced separator, first note that, under strong faithfulness: if $|S|=r$ and $\rk(\Sigma_{A\cup S,A'\cup S})=r$ then $S$ is a minimal separator of $A,A'$. To find a balanced separator of $W$ we proceed as follows; see Algorithm~\ref{alg:sep}. First, search exhaustively through all partitions of $W$ into sets $A, A'$ with $\max\{|A|,|A'|\}\leq \tfrac23|W|$.  For each such split $A/A'$ compute the rank of $\Sigma_{A,A'}$. If $\sn(G)\leq k$ then such minimal rank needs to be less than or equal to $k$, which we can use as the early detection for $\sn(G)>k$. Take any split $A/A'$ that minimizes this rank, say ${\rm rank}(\Sigma_{A,A'})=r\leq k$. 

Now, if $\Sigma$ is $k$-strongly faithful, Algorithm~\ref{alg:sep0} finds a minimal separator of $A$ and $A'$. It first checks for all $v\in V$ whether $\rk(\Sigma_{A\cup\{v\},A'\cup \{v\}})=r$, which is equivalent with $v$ being an element in some minimal separator of $A,A'$; c.f. Lemma~3 in \cite{lugosi2021learning}. After identifying the set $U$ of all nodes that lie in some minimal separator of $A$, $A'$, we proceed to find a minimal separator. This is done by picking any element $v_0$ in $U$, fixing $S=\{v_0\}$, and then adding nodes from $U$ to $S$ one by one, at each step making sure that $S$ is part of a minimal separator of $A,A'$; here again we use Lemma~3 in \cite{lugosi2021learning} and simply check if $\rk(\Sigma_{A\cup S,A'\cup S})=r$. Because, $\Sigma$ is $k$-strongly faithful, this procedure concludes with $|S|=r$, a $(\tfrac23,k)$-separator $S$ of $W$.
   
   \begin{algorithm}
\label{alg:sep0}
$U \gets \emptyset$\;
$r = {\rm rank}(\Sigma_{A,A'})$\;
Let $V$ be the index set of the rows of $\Sigma$\;
\ForAll{$v\in V$}{\If{${\rm rank}(\Sigma_{Av,Bv})=r$}{$U\gets U\cup \{v\}$\;}}
$S\gets \{v_0\}$ for some $v_0\in U$\;
\ForAll{$u\in U\setminus \{v_0\}$}{\uIf{${\rm rank}(\Sigma_{ASu,BSu})=r$}{
    $S\gets S\cup \{u\}$ \;
  } }
    \Return{ $S$\;}
\caption{${\tt ABSeparator}(\Sigma,A,A')$}
\end{algorithm}

  After finding a $(\tfrac23,k)$-separator $S$ of $W$ we would like to
  argue that $S$ is also an $(\alpha,k)$-separator of the entire node set $V$ for some
  $\alpha\in [\tfrac23,1)$. To determine the size of $W$ that allows
  us to draw such a conclusion, we follow the discussion in Section~4
  in \cite{lugosi2021learning};  a key tool is from
  \cite{feige2006finding} who bound the {\sc vc} dimension of the
  class of sets of vertices forming the connected components of a
  graph obtained by removing $k$ arbitrary vertices.

  \begin{thm}\label{thm:separator}
  	Fix $0<\delta,\delta'<\tfrac13$. Suppose that $W\subseteq V$ is obtained by sampling $m$ vertices from $V$ uniformly at random, with replacement, where $m$ satisfies
  	\begin{equation}\label{eq:boundW}
 m \;\;\geq\;\; \max\left(\frac{110k}{\delta ^2}\log\!\left(\frac{88k}{\delta ^2}\right), \frac{2}{\delta ^2}\log\!\left(\frac{2}{\delta'}\right) \right)~.	
\end{equation}
Then, with probability at least $1-\delta'$,  every $(\tfrac23,k)$-separator of $W$ is a $(\tfrac23+\delta,k)$-separator of $V$. 
  \end{thm}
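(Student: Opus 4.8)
The plan is to combine a Vapnik--Chervonenkis uniform-convergence bound over the family of connected components of $G\setminus S$ with an elementary balancing argument. Let $\mathcal{C}_k$ denote the set system on ground set $V$ whose members are the connected components of $G\setminus S$, ranging over all $S\subseteq V$ with $|S|\le k$. The key external input, due to \cite{feige2006finding} and already used in \cite{lugosi2021learning}, is that the {\sc vc} dimension of $\mathcal{C}_k$ is $O(k)$. It is essential that we only ever apply uniform convergence to \emph{single} components: the family of \emph{unions} of components has unbounded {\sc vc} dimension already for a star (remove the center and every leaf becomes its own component), so per-union control is hopeless. Feeding $\mathrm{vc}(\mathcal{C}_k)=O(k)$ into a standard sample-complexity form of the {\sc vc} theorem, I would obtain that for $W$ sampled uniformly with replacement and $m$ as in \eqref{eq:boundW}, with probability at least $1-\delta'$ every $C\in\mathcal{C}_k$ satisfies
\begin{equation*}
\frac{|C|}{n}\;\le\;\frac{|C\cap W|}{m}+\delta .
\end{equation*}
The two terms in \eqref{eq:boundW} are exactly the {\sc vc}-complexity term and the confidence term of this bound, and matching the explicit constants $110,88,2$ is routine bookkeeping.

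Next, condition on this event and fix any $(\tfrac23,k)$-separator $S$ of $W$, witnessed by a partition $W\setminus S=A\sqcup A'$ with $\max\{|A|,|A'|\}\le\tfrac23 m$. The structural point is that, as produced by Algorithm~\ref{alg:sep0} under strong faithfulness (Definition~\ref{def:stfaith0}), $S$ separates $A$ from $A'$ in the \emph{whole} graph $G$, not merely in $G_W$. Since each connected component $D$ of $G\setminus S$ is connected in $G\setminus S$, the set $D\cap W$ cannot meet both $A$ and $A'$; it therefore lies entirely in one side, so $|D\cap W|\le\max\{|A|,|A'|\}\le\tfrac23 m$. Applying the uniform bound to the single component $D\in\mathcal{C}_k$ now yields $|D|\le(\tfrac23+\delta)n$ \emph{for every} component of $G\setminus S$ simultaneously.

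It then remains to turn a bound on every individual component into one balanced bipartition. For this I would use the elementary grouping lemma: if nonnegative items sum to $T$ and each is at most $c$ with $c\ge\tfrac23 T$, they can be split into two groups of total size at most $c$ each. (If the largest item exceeds $T-c$, place it alone, since then the remaining items sum to at most $c$; otherwise assign items greedily to the lighter group, and the assumption $3c\ge 2T$ rules out any group exceeding $c$.) Taking the items to be the components of $G\setminus S$, with $T=|V\setminus S|\le n$ and $c=(\tfrac23+\delta)n\ge\tfrac23 T$, produces a partition $V\setminus S=\tilde A\sqcup\tilde A'$ into unions of components, each of size at most $(\tfrac23+\delta)n$ and separated by $S$ in $G$; hence $S$ is a $(\tfrac23+\delta,k)$-separator of $V$.

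The main obstacle is precisely the non-accumulation of error across a union of many tiny components: the naive idea of lifting the sample-partition $A,A'$ directly to $V$ fails, because the resulting sides are unions of components and uniform convergence does not hold for them. The fix above---reduce the balance requirement to a bound on the size of each \emph{single} component, then rebuild a balanced bipartition combinatorially---is the heart of the argument. It is also where separation in $G$ (rather than only in $G_W$) is indispensable: without it a single large $G$-component could fragment into many small $G_W$-pieces spread across both sides, making $S$ balanced on $W$ yet hopelessly unbalanced on $V$.
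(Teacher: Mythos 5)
Your proof is correct and is essentially the paper's own argument: the paper likewise reduces the statement to the event that $W$ is a $\delta$-sample (uniform estimation of single-component sizes over all removals of at most $k$ vertices, obtained from the Feige--Mahdian {\sc vc} bound via Theorem~22 and Lemma~23 of \cite{lugosi2021learning}), and then invokes Lemma~3.3 of \cite{feige2006finding} for the structural step. The only difference is that you unfold those citations: your per-component reduction plus the grouping lemma is precisely the content of Lemma~3.3 of \cite{feige2006finding} combined with Lemma~26 of \cite{lugosi2021learning}, and in doing so you correctly handle the bridge between the component-size notion of separator used there and the bipartition-based definition of this paper (including the necessary reading that separation is in $G$, not in $G_W$).
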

  \begin{proof}
A set $W \subseteq V$ is a \emph{$\delta$-sample} for $(G,k)$ if for all sets $S\subseteq V$ with $|S|\leq k$ and all $C\in \cC^\cS$, 
\begin{equation}\label{eq:dsample}
\frac{|C|}{|V|}-\delta\;\leq\;\frac{|W\cap C|}{|W|}\;\leq\; \frac{|C|}{|V|}+\delta.  	
\end{equation}
In other words, $W$ allows to accurately estimate relative sizes of all connected components, exactly as in the tree case; c.f. Definition~3.2 in \cite{feige2006finding}.  By Lemma~3.3 in \cite{feige2006finding} every $(\tfrac23,k)$-separator of a $\delta$-sample $W$ is a $(\tfrac23+\delta,k)$ separator of the whole graph. Thus, to conclude the result, we need to show that with probability $\geq 1-\delta'$, $W$ is a $\delta$-sample if $m=|W|$ satisfies \eqref{eq:boundW}. This can be done by combining Theorem~22 and Lemma~23 in \cite{lugosi2021learning}. 
\end{proof}

\subsection{Descending into sub-components}\label{sec:descending2}

Our procedure begins by testing equation \eqref{eq:test} for a given $k\geq 1$. The input is provided by an oracle on $\Sigma$. If the cardinality of $V$ is less than or equal to $k$, there is nothing to verify, and the procedure ends. Otherwise, the procedure attempts to find a small balanced separator $S$ of $G$ using Algorithm~\ref{alg:sep}, as outlined in Section~\ref{sec:central2}. If no such small balanced separator can be found, the procedure halts. If a balanced separator is found, we run \texttt{Components}$(\Sigma^{(S)},[n])$ in Algorithm~\ref{algo1} to identify the connected components of $G\setminus S$.

The algorithm then descends into the components and applies the same procedure to each subset of nodes. We outline two methods of descent. The first is both conceptually and computationally simpler. It guarantees a correct answer when the graph $G$ is decomposable, but may sometimes halt inconclusively for certain non-decomposable graphs. In such cases, we execute our second algorithm, which always provides a bound on the separation number. These two methods of descent are referred to as the marginal descent (MD) and the conditional descent (CD):
\begin{enumerate}
	\item [(MD)] For each $C\in \cC^{(S)}$, we apply our procedure to the smaller matrix $\Sigma_{B,B}$, where $B=C\cup S$. 
\item [(CD)] For each $C\in \cC^{(S)}$, we apply our procedure to the smaller matrix $\Sigma^{(S)}_{C,C}$. 
\end{enumerate}
The procedure for testing small separation number is executed by running either Algorithm~\ref{algomainmarg}, or Algorithm~\ref{algomaincond}. As we will demonstrate, both procedures yield significant insights.

\subsection{Marginal descent}\label{sec:marg}

In the marginal descent, we regress into each subset $B=C\cup S$, where $C$ is a connected component of $G\setminus S$. We then repeat the procedure on each of these smaller sets. However, justifying this step is more complex than in the case of trees. The complexity arises from how we access information about the induced subgraph $G_B$. By applying Lemma~\ref{lem:marg}, we deduce that if $S$ is a clique, then $\Sigma_{B,B}$ belongs to $M(G_B)$. Furthermore, we need to monitor how our genericity conditions change when transitioning to $\Sigma_{B,B}$. If $S$ is a clique, we also demonstrate that $\Sigma_{B,B}$ remains generic in the sense that if $\Sigma$ is strongly faithful, then $\Sigma_{B,B}$ is also strongly faithful. However, these two statements do not hold universally (refer to Proposition~\ref{prop:inB} for more details).

To discuss the graph represented by the submatrix $\Sigma_{B,B}$, we need the following definition.
\begin{defn}\label{def:closure}Suppose $S\subseteq V$, and $B=C\cup S$, where $C$ is one of the connected components of $G\setminus S$. A closure $\overline G_B$ of the induced subgraph $G_B$ of $G$ is a graph obtained from $G_B$ by adding edges between any $i,j\in S$ that are not connected by an edge in $G_B$ but for which there exists a path that (apart from its endpoints $i,j$) lies completely outside of $B$. \end{defn}
The relevance of this definition follows from the next result.
\begin{prop}\label{prop:inB}
	Suppose $|S|\leq k$, and $B=C\cup S$, where $C$ is one of the connected components of $G\setminus S$. If $\Sigma$ is $k$-faithful to $G$ then $\Sigma_{B,B}$ is $k$-faithful to $\overline G_B$, where $\overline G_B$ is the closure of $G_B$ as given in Definition~\ref{def:closure}.
\end{prop}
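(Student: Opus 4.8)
The plan is to unpack the two assertions contained in ``$\Sigma_{B,B}$ is $k$-faithful to $\overline G_B$'': first, that the submatrix actually lies in the model $M(\overline G_B)$, and second, that the rank/separation equivalence of Definition~\ref{def:faith0} carries over from $(G,\Sigma)$ to $(\overline G_B,\Sigma_{B,B})$ for every separator of size at most $k$. The second part I will reduce to a purely graph-theoretic equivalence of separation in $G$ and in $\overline G_B$, and then feed it into the $k$-faithfulness of $\Sigma$ itself.

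\emph{Membership $\Sigma_{B,B}\in M(\overline G_B)$.} Write $A=V\setminus B$ and $K=\Sigma^{-1}$, and recall the block-inversion identity \eqref{eq:KBBinv}, $(\Sigma_{B,B})^{-1}=K_{B,B}-K_{B,A}K_{A,A}^{-1}K_{A,B}$. I must check that $(\Sigma_{B,B})^{-1}_{ij}=0$ for every pair $i,j$ non-adjacent in $\overline G_B$. If at least one of $i,j$ lies in $C$, the closure adds no edge incident to it, so $ij\notin G$; moreover a vertex of $C$ has no neighbour in $A$ because $C$ is a full connected component of $G\setminus S$, whence $K_{i,A}=0$, the correction term drops out, and we are left with $K_{ij}=0$ exactly as in the proof of Lemma~\ref{lem:marg}. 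The genuinely new case is $i,j\in S$: here $ij\notin\overline G_B$ means $K_{ij}=0$ and, by Definition~\ref{def:closure}, that no $i$--$j$ path lies entirely outside $B$. Expanding the correction as $\sum_{u,v\in A}K_{iu}(K_{A,A}^{-1})_{uv}K_{vj}$, I note that $K_{A,A}$ inherits the sparsity pattern of the induced subgraph $G_A$, hence is block-diagonal along the components of $G_A$, so $(K_{A,A}^{-1})_{uv}=0$ unless $u,v$ lie in the same component of $G_A$. A surviving summand would then force a neighbour $u\in A$ of $i$ and a neighbour $v\in A$ of $j$ in one component of $G_A$, i.e.\ an $i$--$j$ path through $A$ only, contradicting the hypothesis; thus the correction vanishes and $(\Sigma_{B,B})^{-1}_{ij}=0$.

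\emph{Separation equivalence.} I will prove that for any $A_1,A_2,S_1\subseteq B$, the set $S_1$ separates $A_1$ from $A_2$ in $G$ if and only if it does so in $\overline G_B$; this is precisely what the closure is designed to guarantee. For the forward direction, take a simple $G$-path from $A_1$ to $A_2$ avoiding $S_1$ and cut it along $S$; since $C$ and $A$ share no edge, each maximal detour into $A$ runs between two distinct vertices $s,s'\in S$ through $A$ only, so Definition~\ref{def:closure} supplies the edge $ss'\in\overline G_B$. Splicing these closure edges in place of the detours yields a walk in $\overline G_B$ from $A_1$ to $A_2$ that still avoids $S_1$ (we only deleted $A$-vertices and retained the on-path $s,s'\notin S_1$), contradicting separation in $\overline G_B$. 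For the converse, a path in $\overline G_B$ avoiding $S_1$ becomes a $G$-walk upon replacing each closure edge $ss'$ by its witnessing path through $A$, whose internal vertices lie in $A$ and hence outside $S_1\subseteq B$, so the walk still avoids $S_1$.

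\emph{Conclusion and main obstacle.} For $A_1,A_2,S_1\subseteq B$ with $|S_1|\le k$ we have $(\Sigma_{B,B})_{A_1\cup S_1,A_2\cup S_1}=\Sigma_{A_1\cup S_1,A_2\cup S_1}$, so $k$-faithfulness of $\Sigma$ to $G$ gives $\rk=|S_1|$ iff $S_1$ separates $A_1,A_2$ in $G$, which the separation equivalence converts into separation in $\overline G_B$; combined with the membership step this is exactly $k$-faithfulness of $\Sigma_{B,B}$ to $\overline G_B$. I expect the membership step to be the main obstacle, as it requires matching the algebraic vanishing of the Schur-complement correction $K_{B,A}K_{A,A}^{-1}K_{A,B}$ to the combinatorial ``path outside $B$'' clause of the closure, the key internal observation being that $K_{A,A}^{-1}$ respects the connected components of $G_A$. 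The separation equivalence, though it is the conceptual justification for Definition~\ref{def:closure}, reduces to routine path surgery once one uses that $C$ sends no edge into $A$.
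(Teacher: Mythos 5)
Your proof is correct and takes essentially the same route as the paper's: membership $\Sigma_{B,B}\in M(\overline G_B)$ is established through the block-inversion identity \eqref{eq:KBBinv} together with the block-diagonal structure of $K_{A,A}$ along the components of $G_A$, and separation statements are transferred between $G$ and $\overline G_B$ by swapping closure edges with their witnessing paths outside $B$. The only difference is organizational: the paper obtains the direction ``separation in $\overline G_B$ implies $\rk(\Sigma_{A_1\cup S_1,A_2\cup S_1})=|S_1|$'' for free from membership (since, for matrices in the model, separation always implies the rank drop, as noted after Definition~\ref{def:faith0}), whereas you prove the corresponding purely graph-theoretic converse --- splicing a simple $G$-path into an $\overline G_B$-walk by contracting its detours through $A$ into closure edges --- which is equally valid and keeps the whole equivalence at the graph level.
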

\begin{proof}
	We first show that $\Sigma_{B,B}\in M(\overline G_B)$. Consider the graph $\overline G$ obtained from $G$ by adding precisely the extra edges we added constructing $\overline G_B$. Since $G\subseteq \overline G$ and $\Sigma\in M(G)$, we also have $\Sigma\in M(\overline G)$. To show that $\Sigma_{B,B}\in M(\overline G_B)$, by Lemma~\ref{lem:marg}, it is enough to check that $(\Sigma_{B})^{-1}_{ij}=0$ for all $i,j\in S$ not connected by an edge in $\overline G_B$. By construction of $\overline G_B$, every path in $G$ between any such $i$ and $j$ contains a vertex in $B$. It follows that $B\setminus \{i,j\}$ separates $i$ and $j$ in $G$. We argue that this already implies that $(\Sigma_{B,B})^{-1}_{ij}=0$ as needed. Indeed, let $A=V\setminus B$ and $K=\Sigma^{-1}$. By \eqref{eq:KBBinv}
	$$
	(\Sigma_{B,B})^{-1}_{ij}\;=\;K_{ij}-K_{i,A}K_{A,A}^{-1}K_{A,j}.
	$$
	Since $i$ and $j$ are not connected, $K_{ij}=0$. Now split $A$
        into $ A_i\cup A_j$, where $A_i$ is the connected component of
        $i$ in $A$ and $A_j=A\setminus A_i$. Note that $A_j$ contains
        the connected component of $j$ and $A_i\cap A_j=\emptyset$ by
        the fact that there is no path between $i$ and $j$ in
        $A$. Then $K_{i,A}$ is supported only on $A_i$ and $K_{A,j}$
        is supported only $A_j$. Moreover $K_{A_i,A_j}=0$ and so
        $(K_{A,A})^{-1}_{A_i,A_j}=0$ too (by simple block matrix inversion). The claim follows.
		
	We now show that $\Sigma_{B,B}$ is, in addition, $k$-faithful with respect to $\overline G_B$. Indeed, suppose that for some $i,j\in B$ and $S'\subseteq B\setminus \{i,j\}$ with $|S'|\leq k$, it holds that $\Sigma_{ij}=\Sigma_{i,S'}\Sigma_{S',S'}^{-1}\Sigma_{S',j}$ (equiv. $\Sigma_{ij}^{(S')}=0$). Since $\Sigma\in M^{k}(G)$, it follows that $S'$ separates $i,j$ in $G$. We show that the same holds in $\overline G_B$. Suppose there is a path in $\overline G_B$ between $i$, $j$ that does not contain vertices from $S'$. The only possible new path must involve some of the added edges (that do not lie in $G_B$).  In other words there must exist vertices $u,v\in S$ that are not adjacent in $G$ but are in $\overline G_B$ and the edge $(u,v)$ is contained in the said path. But recall that $u,v\in S$ are joined in the construction of $\overline G_B$ only if there exists a path $P$ between them that is entirely contained outside $B$. Note that $P$ cannot contain vertices in $S'$ as it lies outside of $B$. Replacing every such potential edge $(u,v)$ with the corresponding path $P$, gives a path between $i$ and $j$ in $G$ that contains no vertices in $S'$. This leads to a contradiction because we said that no such path can exist.   \end{proof}
	

\begin{algorithm}
Input: an oracle for $\Sigma$, $k\geq 1$ (separation number to test)\;
Let $V$ be the index set of rows of $\Sigma$\; 
\If{$|V|\leq k$}{{\tt STOP}}
\Else{run \texttt{Separator}$(\Sigma)$ to get a balanced separator
  $S$, such that $|S|\leq k$ and \texttt{Components}$(\Sigma^{(S)},V)$ to get the corresponding components $C_1,\ldots,C_\ell$\;
\For{$i=1,\ldots,\ell$}{
$B:=C_i\cup S$\;
run ${\tt test.marginal}(\Sigma_{B,B},k)$}}
\label{algomainmarg}
\caption{${\tt test.marginal}(\Sigma,k)$}
\end{algorithm}

We rely on the following important result that shows that local separators found by our procedure in some component $B$, become global separators of $G$. 
\begin{lem}\label{lem:loc2globsep}
	Suppose $\Sigma$ is $k$-faithful to $G$. Let $S$ be a separator satisfying $|S|\leq k$ found at some point in the algorithm by running {\tt Separator}$(\Sigma_{B,B})$. Let $C_1,\ldots,C_\ell$ be the corresponding components obtained by running {\tt Components}$(\Sigma^{(S)}_{B,B})$. Then $S$ separates $C_1,\ldots,C_\ell$ in $G$.  
\end{lem}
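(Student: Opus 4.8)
The plan is to attach to every set $B$ that arises in the recursion the closure graph $\overline G_B$ furnished by Proposition~\ref{prop:inB}, and then to transfer the separation from $\overline G_B$ back to $G$ one recursion level at a time. First I would record the starting point. Iterating Proposition~\ref{prop:inB} along the recursion, every $B$ carries a graph $\overline G_B$ with $\Sigma_{B,B}\in M^k(\overline G_B)$, and at the root $B=V$ we have $\overline G_V=G$. Since $|S|\le k$ and $\Sigma_{B,B}$ is $k$-faithful to $\overline G_B$, Lemma~\ref{lem:M0coms} shows that \texttt{Components}$(\Sigma^{(S)}_{B,B})$ returns exactly the connected components $C_1,\dots,C_\ell$ of $\overline G_B\setminus S$. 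Hence $S$ separates $C_1,\dots,C_\ell$ pairwise in $\overline G_B$, and the whole task reduces to showing that separation by $S$ in $\overline G_B$ implies separation by $S$ in $G$.

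The key step, which I expect to be the crux, is a single-level claim: if $B'=C'\cup S_\star$ arises from a parent set whose graph is $\widehat G$, where $C'$ is a connected component of $\widehat G\setminus S_\star$ and $\overline G_{B'}$ is the closure of the induced subgraph $\widehat G_{B'}$, then any $S''\subseteq B'$ that separates two vertex sets in $\overline G_{B'}$ also separates them in $\widehat G$. I would prove the contrapositive: take a path $P$ in $\widehat G\setminus S''$ joining the two sets (both contained in $B'$) and split it into the portions lying in $B'$ and the maximal excursions into $\widehat A:=\widehat V\setminus B'$. Because $C'$ is a component of $\widehat G\setminus S_\star$, no vertex of $C'$ has a neighbour in $\widehat A$, so each excursion must enter and leave $B'$ through $S_\star$; thus it is a path outside $B'$ joining two vertices of $S_\star$, and by Definition~\ref{def:closure} the corresponding edge is present in $\overline G_{B'}$ (a loop excursion is simply deleted, and an excursion whose endpoints are already adjacent needs no new edge). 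Contracting every excursion to its closure edge turns $P$ into a walk in $\overline G_{B'}$ between the two sets whose vertices all lie on $P$, hence avoid $S''$, contradicting that $S''$ separates them in $\overline G_{B'}$.

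Finally I would chain this single-level claim up the recursion tree. Let $V=B_1\supseteq\cdots\supseteq B_d=B$ be the ancestors of $B$, so that $\overline G_{B_j}$ is the closure of $(\overline G_{B_{j-1}})_{B_j}$ and $\overline G_{B_1}=G$; crucially the fixed set $S$ is contained in every $B_j$. Applying the single-level claim with $\widehat G=\overline G_{B_{j-1}}$, $B'=B_j$ and $S''=S$ repeatedly, separation of $C_1,\dots,C_\ell$ by $S$ in $\overline G_{B_d}$ propagates to $\overline G_{B_{d-1}}$, then to $\overline G_{B_{d-2}}$, and eventually to $\overline G_{B_1}=G$, which is the desired conclusion. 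The main obstacle is exactly the bookkeeping that makes this induction legitimate: one must check that closures compose, so that Proposition~\ref{prop:inB} and the single-level claim may be invoked with $\widehat G$ itself a closure rather than $G$, and that the excursion decomposition is carried out carefully enough to always land on genuine edges of the next closure.
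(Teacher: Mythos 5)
Your proposal is correct, and its engine is the same as the paper's: argue by contrapositive, take a path $P$ in the ambient graph avoiding $S$, decompose it into segments inside $B$ and maximal excursions outside $B$, observe that each excursion must enter and leave $B$ through the separator that created $B$ (a vertex of the component $C'$ has no neighbour outside $B'=C'\cup S_\star$), and replace each excursion by the corresponding closure edge from Definition~\ref{def:closure} to obtain a walk in $\overline G_B$ avoiding $S$ --- contradicting that $C_1,\ldots,C_\ell$ are distinct components of $\overline G_B\setminus S$. Where you differ is in rigor, on two points that the paper's one-paragraph proof glosses over. First, the paper asserts that $P$ contains a single subpath with one endpoint in $C_i$, the other in $C_j$, lying entirely outside $B$; this need not literally hold when $P$ weaves in and out of $B$ through several components, and your walk-contraction (including the handling of loop excursions and of excursions whose endpoints are already adjacent) is the clean fix. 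Second, and more substantially, deep in the recursion $\overline G_B$ is an \emph{iterated} closure --- a closure of a closure, taken relative to the parent's closure rather than relative to $G$ --- so the paper's direct step ``a path in $G$ outside $B$ makes its endpoints adjacent in $\overline G_B$'' does not follow immediately from Definition~\ref{def:closure}; your single-level claim combined with the chaining along the ancestors $V=B_1\supseteq\cdots\supseteq B_d=B$ (using that the fixed separator $S$ is contained in every $B_j$, and that Proposition~\ref{prop:inB} can be iterated because its hypotheses are stated for an arbitrary graph and a matrix $k$-faithful to it) is exactly the bookkeeping needed to justify that step. So the two proofs share the key mechanism; yours is longer but actually covers the recursive situation that the lemma is really about.
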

Note that this result is not immediately obvious. 	By Proposition~\ref{prop:inB}, if $\Sigma\in M^k(G)$, then $\Sigma_{B,B}\in M^k(\overline G_B)$. Thus, by construction, $S$ separates $C_1,\ldots,C_\ell$ in $\overline G_B$ but the claim is about separation in $G$.
	\begin{proof}
		 Consider two components $C_i,C_j$ and suppose that $u\in C_i$, $v\in C_j$ are connected by a path $P$ in $G$ that does not cross $S$. Then, $P$ contains a subpath with one endpoint $u'$ in $C_i$, another $v'$ in $C_j$ that lies entirely outside of $B$. This however leads to contradiction, because it implies that $u',v'$ are connected in $\overline G_B$ and so, they cannot lie in two disconnected components of $\overline G_B\setminus S$.   
	\end{proof}

\begin{algorithm}
\label{alg:separator}
Pick a set $W$ by taking $m$ vertices uniformly at random with replacement, where $m$ satisfies (\ref{eq:boundW}) with $\delta=\tfrac{7}{30}$ and $\delta'=\tfrac{\epsilon}{10 n \log(n)}$ ($\epsilon\in (0,1)$ is a fixed parameter) \;
Search exhaustively through all partitions of $W$ into sets
$ A,A'$ with $|A|,|A'|\leq \frac{2}{3}|W|$, minimizing ${\rm rank}(\Sigma_{A,A'})$\;
Let $A/A'$ be any partition minimizing the rank\;
\If{${\rm rank}(\Sigma_{A,A'})>k$}{{\tt BREAK}\;}
$S\gets {\tt ABSeparator}(\Sigma,A,A')$\;
\Return{ $S$}
  \label{alg:sep}
\caption{${\tt Separator}(\Sigma,k)$}
\end{algorithm}

In our algorithm, the crucial step involves finding a balanced separator. Suppose that $\Sigma$ is $k$-strongly faithful to graph $G$. In the initial step, we identify $S$ as a minimal separator in the random sample $W$. However, in subsequent steps, we apply our algorithm to $\Sigma_{B,B}$. According to Proposition~\ref{prop:inB}, if $\Sigma$ is $k$-strongly faithful to $G$, then $\Sigma_{B,B}$ is $k$-faithful to the complement graph $\overline G_B$. However, it is important to note that $\Sigma_{B,B}$ is not necessarily $k$-strongly faithful to $\overline G_B$. This property is essential for Algorithm~\ref{alg:separator} to function correctly. As a consequence, even if $\rk(\Sigma_{A,A'})=r$ (meaning that subsets $A$ and $A'$ are separated by a subset of size $r$), Algorithm~\ref{alg:separator} might output a proper subset of a minimal separator if that separator is not contained within $B$.


\begin{defn}\label{def:goodrun}
	We say that Algorithm~\ref{algomainmarg} has a \emph{good run} if, in each call of Algorithm~\ref{alg:sep0}, the output $S$ of the algorithm is a separator of $A$ and $A'$, equivalently, $|S|=\rk(\Sigma_{A,A'})$. 
\end{defn}	
\noindent  If $\Sigma$ is $k$-strongly faithful to $G$ this definition is just saying that in each call of Algorithm~\ref{alg:sep0} the sets $A$, $A'$ are minimally separated within the current component $B$. A sufficient condition for this to happen is that, for each component $B$, $\Sigma_{BB}$ is $k$ strongly faithful to $\overline G_B$. 
\begin{lem}\label{lem:decom}
	If $G$ is a decomposable graph with $\sn(G)\leq k$  and $\Sigma$ is $k$-strongly faithful to $G$ then, for every subsequent component $B$, in the run of Algorithm~\ref{algomainmarg}, $\Sigma_{BB}$ is $k$-strongly faithful to $G_B$.
\end{lem}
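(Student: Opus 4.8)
The plan is to argue by induction on the depth of the recursion tree of Algorithm~\ref{algomainmarg}, maintaining the invariant that the current component $B$ satisfies (i) $G_B$ is decomposable and (ii) $\Sigma_{B,B}$ is $k$-strongly faithful to $G_B$. The base case $B=V$ is the hypothesis of the lemma. Property (i) persists for free: an induced subgraph of a chordal graph is chordal, and each descent step replaces $B$ by a further induced subgraph $G_{B'}$ with $B'=C\cup S\subseteq B$. The entire content is therefore the propagation of (ii) across a single descent step, so I would fix a component $B$ satisfying the invariant and analyze one call of {\tt Separator} on $\Sigma_{B,B}$.

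Let $S$ be the separator returned by that call, with associated partition $A/A'$ of the sample and $r=\rk(\Sigma_{A,A'})\le k$. Since the run is good (Definition~\ref{def:goodrun}) and $\Sigma_{B,B}$ is $k$-strongly faithful to $G_B$, the set $S$ is an inclusion-minimal separator of $A$ and $A'$ in $G_B$ with $|S|=r$ equal to the minimum separator size. The key structural claim, and the step I expect to be the \textbf{main obstacle}, is that such an $S$ induces a clique in $G_B$. The engine is the classical fact that in a chordal graph a minimal separator between two full components is complete: for $u,v\in S$ one builds a cycle of length at least four passing through a common $A$-side component $R$ and a common $A'$-side component $R'$ of $G_B\setminus S$ to which both $u$ and $v$ attach, whereupon chordality forces the only admissible chord $uv$. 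The delicate point is that an inclusion-minimal separator of two \emph{sets} need not be a clique when the reachable sets split into several components; this is precisely where the rank-minimizing choice of $A/A'$ in Algorithm~\ref{alg:sep} must be used, so as to guarantee that $u$ and $v$ attach to common components on each side and that the relevant components are full.

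Once $S$ is known to be a clique in $G_B$, the closure of Definition~\ref{def:closure}, formed inside $G_B$, adds no edges, so it equals $G_{B'}$ for every descendant $B'=C\cup S$. Applying Proposition~\ref{prop:inB} inside $G_B$ then shows that $\Sigma_{B',B'}$ is $k$-faithful to $G_{B'}$, and it remains to upgrade this to $k$-strong faithfulness by comparing minimal separator sizes directly. Writing $s_H(A_1,A_2)$ for the size of a minimum separator of $A_1,A_2$ in a graph $H$, it suffices to prove $s_{G_B}(A_1,A_2)=s_{G_{B'}}(A_1,A_2)$ for all $A_1,A_2\subseteq B'$ with this value at most $k$, since $(\Sigma_{B',B'})_{A_1,A_2}=(\Sigma_{B,B})_{A_1,A_2}$ and $\Sigma_{B,B}$ is $k$-strongly faithful to $G_B$ by the inductive hypothesis. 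One inequality is immediate by restricting a $G_B$-separator to $B'$. For the reverse I would take a minimum $G_{B'}$-separator $T$ and a putative $G_B$-path from $A_1$ to $A_2$ avoiding $T$: each excursion of this path outside $B'$ enters and leaves through $S$, and since $S$ is a clique its two boundary vertices are already adjacent in $G_{B'}$, so the excursion can be short-circuited to a walk inside $B'$ that still avoids $T$ (as $T\subseteq B'$), contradicting that $T$ separates in $G_{B'}$. Hence $T$ already separates in $G_B$, the two separator sizes agree, and $\Sigma_{B',B'}$ is $k$-strongly faithful to $G_{B'}$, which closes the induction.
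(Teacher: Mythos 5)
Your overall architecture coincides with the paper's: induct along the recursion (induced subgraphs of chordal graphs stay chordal), show that the separator $S$ produced at each call is a clique of the current graph $G_B$ so that the closure of Definition~\ref{def:closure} is trivial, and then propagate strong faithfulness by shortcutting excursions through the clique $S$. Your two-inequality comparison of minimum separator sizes in $G_B$ and $G_{B'}$ is essentially a repackaging of the paper's argument that the minimal separator of $A_1,A_2$ can be taken inside $B'$, and that part of your write-up is correct. The genuine gap is that you never prove the one claim on which everything rests: that the output $S$ of Algorithm~\ref{alg:sep0} is a clique. You explicitly defer it (``the step I expect to be the main obstacle'') and gesture at the rank-minimizing choice of $A/A'$, but give no argument. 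This is not a removable technicality: if $S$ is not a clique then $\overline G_{B'}\neq G_{B'}$, Proposition~\ref{prop:inB} only yields faithfulness to $\overline G_{B'}$, and the conclusion about $G_{B'}$ can actually fail. The paper closes this step in one line, by noting that under a good run $S$ is a minimum-cardinality separator of $A,A'$ and invoking Dirac's characterization of decomposable graphs (every minimal separator is a clique).

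Your instinct that this step is delicate is well founded, and your proposed repair does not work: Dirac's theorem concerns inclusion-minimal separators of a vertex \emph{pair} (equivalently, separators with two full components), and a minimum-cardinality separator of two \emph{sets} need not be of that type, even after rank minimization over balanced partitions. Concretely, take hubs $x\sim y$; spokes $u,v,p_1,\ldots,p_M$ each adjacent to both hubs; a pendant star of $M$ leaves on each of $u$ and $v$; and one pendant leaf $q_i$ on each $p_i$. This graph is chordal with $\sn(G)=2$. For a typical sample $W$ consisting only of $u$-leaves, $v$-leaves, $p_i$'s and $q_j$'s (disjoint spoke indices), no single vertex splits $W$ in a balanced way, so the minimum rank over balanced partitions is $2$; the partition ($u$-leaves together with $v$-leaves) versus (the rest) attains it, and \emph{both} the clique $\{x,y\}$ and the non-clique $\{u,v\}$ are minimum separators for it, so Algorithm~\ref{alg:sep0} may legitimately return $\{u,v\}$ -- a good run in the sense of Definition~\ref{def:goodrun}. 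Descending into $B=\{u_1\}\cup\{u,v\}$ one finds that $u$ and $v$ are disconnected in $G_B$ while $\Sigma_{uv}\neq 0$, so $\Sigma_{B,B}$ is not even $0$-faithful to $G_B$. So the step you flagged is exactly where a complete proof must use more than minimum cardinality of $S$; as your proposal stands, the key claim is both unproven and, in the generality in which you (and, arguably, the paper) would need it, false.
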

\begin{proof}
Let $S$ be a minimal separator found in the first step of the procedure, and suppose we descend into $B = C \cup S$, where $C$ is one of the components in $\mathcal{C}^{(S)}$. \cite{dirac1961rigid} characterized decomposable graphs as those for which every minimal separator is a clique. Thus, in our case, $S$ is a clique, and $\overline{G}_B = G_B$.  To show that $\Sigma_{B,B}$ is $k$-strongly faithful, suppose that $\rk(\Sigma_{A,A'}) = r \leq k$ for some $A, A' \subseteq B$. Since $\Sigma \in M^{k,\circ}(G)$, $A$ and $A'$ are minimally separated in $G$ by some $S'$ with $|S'| = r$. We show that $S' \subseteq B$. Suppose that $S'$ contains a vertex $v \notin B$. By minimality of $S'$, there is a path $P$ between $A$ and $A'$ that crosses $v$ but no other element of $S'$. Let $P_1$ be the part of $P$ that leads from $A$ to $v$, and $P_2$ be the part from $v$ to $A'$. Both $P_1$ and $P_2$ contain vertices in $S$. Let $u_1$ be the first such vertex on $P_1$, and $u_2$ be the last such vertex on $P_2$ (where $u_1 = u_2$ is possible). Since $S$ forms a clique, $u_1$ and $u_2$ are connected. Thus, walking along $P_1$, jumping from $u_1$ to $u_2$, and then going to $A'$ along $P_2$ gives a path from $A$ to $A'$ with no vertices in $S'$. But this is a contradiction. 
	\end{proof}

Decomposability is a sufficient but definitely not a necessary condition for our algorithm to have good run. Also, note that there is a simple way to detect if the algorithm has good run. Simply check if the set $S$, output by Algorithm~\ref{alg:sep0}, satisfies $|S|=\rk(\Sigma_{A,A'})$. 

\begin{thm}\label{th:main3}
  Let $G$ be a connected graph and let $\Sigma$ be $k$-strongly faithful to $G$. Fix $\epsilon\in (0,1)$ and define $m$ to be minimal satisfying
  \eqref{eq:boundW} with $\delta=\tfrac{7}{30}$ and
  $\delta'=\tfrac{\epsilon}{10 n \log(n)}$. 
Suppose Algorithm~\ref{algomainmarg} has good run. If Algorithm~\ref{algomainmarg} terminates, then $\sn(G)
\leq {2k}$. If it breaks, then $\sn(G)
> {\tfrac{2}{3}k}$.
Moreover, with probability at
  least $1-\epsilon$, it runs with total
  query complexity
$$
\cO(n\log(n)\max\{m,k\Delta\}) = O(nk\Delta\log(n) + n\log(n) k\log(k)
+n\log^2(n) + n\log(n/\epsilon))~.
$$
\end{thm}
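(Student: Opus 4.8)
The plan is to treat the two correctness statements and the complexity bound on a common scaffold built from three facts already available: Theorem~\ref{thm:separator}, which lifts a balanced separator of the random sample $W$ to a balanced separator of the ambient vertex set; Proposition~\ref{prop:inB} together with Lemma~\ref{lem:loc2globsep}, which guarantee that---on a good run, so that $|S|=\rk(\Sigma_{A,A'})$ in every call of Algorithm~\ref{alg:sep0}---each separator $S$ returned is a genuine minimal separator of its bipartition and actually separates the returned components in $G$ (not only in the closure $\overline G_B$); and the sandwich $\sn(G)\le\tw(G)+1$. Throughout I would use the identity, supplied by strong faithfulness, that $\rk(\Sigma_{A,A'})$ equals the size of a minimal $A$--$A'$ separator in $G$, so that the rank queries in Algorithm~\ref{alg:sep} read off exactly the connectivity information needed.

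For the termination claim I would read a tree decomposition off the recursion. If Algorithm~\ref{algomainmarg} terminates, its call tree is finite, every internal node carries a separator $S$ with $|S|\le k$ that (by Lemma~\ref{lem:loc2globsep}) splits the current bag $B$ into the children $B_i=C_i\cup S$ along genuine $G$-separations, and every leaf has at most $k$ vertices. I would then assign to each node the bag formed by its own separator together with the separator inherited from its parent; since each has size $\le k$, every bag has size $\le 2k$, and the running-intersection and edge-covering axioms hold precisely because the marginal descent keeps $S$ inside every child $B_i=C_i\cup S$. This exhibits a tree decomposition of width $\le 2k-1$, whence $\tw(G)\le 2k-1$ and $\sn(G)\le\tw(G)+1\le 2k$. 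The only subtlety I anticipate is checking the decomposition axioms; the factor $2$ is exactly the ``own plus inherited separator'' forced by the bag shape $B=C\cup S$.

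For the breaking claim I would argue contrapositively and push the failure through the sampling guarantee. A break at a node with bag $B$ means that no balanced bipartition $A\sqcup A'=W$ of the sample satisfies $\rk(\Sigma_{A,A'})\le k$, i.e.\ every balanced bipartition of $W$ needs more than $k$ vertices to separate it in $G$. Such a $W$ is a well-linked witness: feeding it into the two-sided $\delta$-sample estimate behind Theorem~\ref{thm:separator} (with $\delta=\tfrac{7}{30}$) certifies that $B$, and hence $V$, has no balanced separator of size $\le k$ at the corresponding balance level. I would then convert the algorithm's ``bipartition with a minimal $G$-separator'' into the partition $(S,A,A')$ of the definition in \eqref{eq:test}, use monotonicity of $\sn$ under induced subgraphs to return from $G_B$ to $G$, and track the balance slack $\delta=\tfrac{7}{30}$; this is what turns the threshold $k$ into the stated $\tfrac23 k$. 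I expect this constant bookkeeping---reconciling the connectivity certified by the rank (which lives in $G$ and may use separating vertices inside $A\cup A'$) with the in-$G_W$ separators of the definition---to be the main obstacle of the whole proof.

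Finally, for the complexity I would first fix the good event. By a union bound over the at most $10\,n\log(n)$ recursive calls, each instance of Theorem~\ref{thm:separator} fails with probability at most $\delta'=\tfrac{\epsilon}{10\,n\log(n)}$, so with probability $\ge 1-\epsilon$ every separator found is a $(\tfrac{9}{10},k)$-separator of its bag (here $\tfrac23+\delta=\tfrac{9}{10}$); consequently bag sizes contract by the factor $\tfrac{9}{10}$ and the recursion has depth $\cO(\log n)$. A single call on a bag $B$ of size $b$ costs $\cO(m^2)$ to query $\Sigma_{W,W}$ for the exhaustive partition search, $\cO(bm)$ for the two loops of Algorithm~\ref{alg:sep0}, and---after precomputing $\Sigma_{B,S}$ and $\Sigma_{S,S}^{-1}$, so that each column of $\Sigma^{(S)}$ costs $\cO(b)$ queries---$\cO(bk\Delta)$ for \texttt{Components}$(\Sigma^{(S)},B)$, using $\ell\le k\Delta$ components and Lemma~\ref{lem:comp}. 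Summing over a level (as in the proof of Theorem~\ref{th:main1}, where geometric shrinkage keeps the total bag size per level $\cO(n)$) gives $\cO(n\max\{m,k\Delta\})$, and over the $\cO(\log n)$ levels $\cO(n\log(n)\max\{m,k\Delta\})$; substituting the minimal $m$ from \eqref{eq:boundW}, which is $\cO(k\log k+\log(n/\epsilon))$, reproduces the displayed bound.
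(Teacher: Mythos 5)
Your query-complexity analysis is essentially the paper's (per-level cost $\cO(n\max\{m,k\Delta\})$, recursion depth $\ell^*\leq 10\log(n/k)$, union bound over at most $n\ell^*$ calls with failure probability $\delta'$ each), but both correctness claims have genuine gaps.

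For termination, the bags $S_{\mathrm{own}}\cup S_{\mathrm{parent}}$ do not form a tree decomposition: the running-intersection axiom fails. A vertex $v\in S_{\mathrm{parent}}\setminus S_{\mathrm{own}}$ falls into exactly one child component $C_i$; it lies in the current bag, is absent from the bag of child $i$ unless it happens to enter that child's separator, and then reappears in some deeper bag when it is finally absorbed into a separator or a leaf. So the bags containing $v$ can form a disconnected set of nodes, and without this axiom the ``width $\leq 2k-1$'' conclusion is vacuous. The standard repair---carrying into each bag all ancestor-separator vertices still present in the current set $B$---inflates bags to size $\cO(k\log n)$ and would only yield $\sn(G)=\cO(k\log n)$, i.e.\ the weaker conclusion of Theorem~\ref{th:main2}. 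The paper avoids tree decompositions altogether: it fixes an arbitrary $W$ with $|W|= k+2$, walks down the recursion tree along the unique child holding more than $\tfrac23|W|$ of $W$, stops at the first node $B_t$ where no child is $W$-heavy, and shows via Lemma~\ref{lem:loc2globsep} that the union of just the \emph{two consecutive} separators $S_t\cup S_{t+1}$ (size $\leq 2k$) splits $W$ in a balanced way, because $A_t=V\setminus B_t$ carries less than $\tfrac13|W|$ of $W$. That ``two consecutive separators suffice for any single $W$'' observation is precisely what the decomposition route cannot deliver.

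For the breaking claim, your argument never confronts the closure $\overline G_B$, which is the crux. At an internal node, $\Sigma_{B,B}$ is faithful to $\overline G_B$, not to $G_B$ (Proposition~\ref{prop:inB}), whereas $\sn(G)\leq \tfrac23 k$ supplies a balanced separator $S'$ of the \emph{induced} graph $G_B$. These notions do not match: $S'$ may fail to separate in $\overline G_B$ because of closure edges inside the inherited separator $S$, and conversely a rank bound certifies separation in $G$, which is a strictly stronger requirement than separation in $G_W$ or $G_B$ (your step ``rank $>k$ for every balanced bipartition of $W$, hence no balanced induced-subgraph separator'' goes the wrong way, since a minimal separator in $G_B$ can be much smaller than a minimal separator in $G$). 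The paper's contradiction runs instead as follows: if $S'$ does not separate $A'$ from $B'$ in $\overline G_B$, then both $A'$ and $B'$ meet $S$; augmenting by the smaller intersection, $\tilde S=S'\cup(A'\cap S)$ kills all closure edges on the $A'$ side and is a balanced $\overline G_B$-separator of size $\leq k$, which under a good run and $k$-strong faithfulness the algorithm would have found---contradicting the break. This augmentation step, not the sampling slack $\delta=\tfrac{7}{30}$, is where the constant $\tfrac23$ comes from. Relatedly, routing the argument through Theorem~\ref{thm:separator} would make your break conclusion hold only with high probability, whereas the theorem asserts it deterministically on a good run.
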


For the query complexity analysis, the following result is useful.
\begin{lem}\label{prop:runsep}
	Running {\tt Separator}$(\Sigma_{V,V},k)$ in Algorithm~\ref{alg:sep} takes query complexity $\cO(|V|m )$. 
\end{lem}
\begin{proof}
	Finding a balanced partition $A,A'$ of $W\subseteq V$ by
        running {\tt Separator}$(\Sigma_{V,V},k)$ is achieved by
        exhaustively searching all $<2^m$ balanced partitions of $W$
        and computing the rank of the associated matrixes
        $\Sigma_{A,A'}$, which gives query complexity $\cO(m^2)$ needed
        to query $\Sigma_{W,W}$. Then running {\tt
          ABSeparator}$(\Sigma_{V,V},A,A')$ in Algorithm~\ref{alg:sep0}
        takes $\cO(m |V|)$  for the first \textbf{forall} loop. In the
        second \textbf{forall} loop we need to query $\cO(k^2)$ new
        entries but this will not affect the order of magnitude of the      total complexity because both $|V|$ and $m$ are larger than $k$. The total cost is $\cO(|V|m)$.
\end{proof}

\begin{proof}[Proof of Theorem~\ref{th:main3}]
	The proof is split into three parts. We first prove the
  bound on the total query complexity. Then we prove that the claimed bound for the query complexity holds with high
  probability. Finally, we show how one can make the conclusions on the size of $\sn(G)$ depending on
whether the algorithm breaks or terminates. Throughout we assume that Algorithm~\ref{algomainmarg} has good run, or equivalently, in each call {\tt ABSeparator}$(\Sigma_{V,V},A,A')$, the output $S$ satisfies $|S|=\rk(\Sigma_{A,A'})$.

\textbf{Query complexity bound:}
Consider the first step of our procedure assuming $|V|=n>k$. By Lemma~\ref{prop:runsep}, {\ttfamily Separator}$(\Sigma,k)$ takes query complexity $\cO(nm)$. If the procedure does not break then a $(\tfrac23,k)$-separator $S_1$ of $W$ is found. Run  {\ttfamily Components}$(\Sigma^{(S_1)},V\setminus S_1)$. Note that querying the entries of
$\Sigma^{(S_1)}$ has some initial cost $|S_1|^2$ needed to get
$\Sigma_{S_1,S_1}$ (but this submatrix was queried already in the earlier step) and for each entry $\Sigma^{(S_1)}_{ij}$ the cost is
$1+2|S_1|=\cO(|S_1|)$ to query $\Sigma_{ij}$ and the vectors
$\Sigma_{i,S}$, $\Sigma_{j,S}$. 
Using Lemma~\ref{lem:comp}, we conclude that the query complexity of getting the components is $\cO(|S_1|n \Delta)=\cO(kn\Delta)$. Thus, the total query complexity for the first step  is $\cO(n\max\{m,k\Delta\})$. 

In the second level of the recursion tree we need to take into account
the cost for each $B=C\cup S_1$, where $C$ runs over all the connected
components of $G\setminus S_1$. By Lemma~\ref{prop:runsep}, the cost
of running {\ttfamily Separator}$(\Sigma_{BB},k)$ on a single
component $B=C\cup S_1$ is  $\cO(|C|m)$. Thus, the total query
complexity of running this on all connected components is
$\cO(nm)$. Let $S$ be the separator found in the component $B=C\cup
S_1$. The cost of running {\ttfamily
  Components}$(\Sigma^{(S)}_{B,B},B\setminus S)$ in this component is
$\cO(k|C|\Delta)$, which is additive across the components. Overall,
the total cost in the second level is $\cO( n\max\{m,k\Delta\})$ and
the same query complexity holds for every level $\ell$ in the recursion tree. 

	To get the total query complexity, it remains to control the number of times this procedure is executed on subsequent sub-components. Referring to the underlying algorithm tree (like in Figure~\ref{fig:levels}), this corresponds to the number of levels in this tree and the number of components in each level. 	
	
	Let $\cE_\ell$ be the event that all components in the $\ell$-th level of the recursion tree have size bounded by $(\tfrac{2}{3}+\delta)^\ell n=(\tfrac{9}{10})^\ell n$. Let 
	\begin{equation}\label{eq:ellstar}
		\ell^*\;:=\;\left\lceil \frac{\log\left(\frac{n}{k}\right)}{\log\left(\frac{10}{9}\right)}\right\rceil\;\leq\;10\log\left(\frac{n}{k}\right).
	\end{equation}
	Under $\cE_{\ell^*}$, each component in the $\ell^*$-th level of the tree has size bounded by $k$. After this, one more run of the algorithm will stop.  Summing over all $\ell^*+1$ levels, the total query complexity is then $\cO({\ell^*}n\max\{m,k\Delta\})$, which using \eqref{eq:ellstar} gives the claimed complexity.

\textbf{Bound on the probability of getting the claimed query complexity:}	
	We now show that the event $\cE_{\ell^*}$ holds with probability at least $1-\epsilon$. The probability of the complement of $\cE_{\ell^*}$ can be bounded by the probability that in at least one instance the separator $S$  output by Algorithm~\ref{alg:sep0} is not a $(\tfrac{9}{10},k)$-separator of the whole component. By Theorem~\ref{thm:separator} (with $\delta=\tfrac{7}{30}$ and $\delta'=\tfrac{\epsilon}{10 n \log(n)}$) this happens with probability $\leq \delta'$. The number of components on which we run this algorithm is bounded by $n\ell^*$ and so, by the union bound, this probability can be bounded by 
	$$
	n \ell^* \delta'\;\leq\;n 10 \log\left(\frac{n}{k}\right)\frac{\epsilon}{10 n \log(n)}\;\leq\;\epsilon.
	$$ 	

	\textbf{Correctness of the algorithm:} Suppose that
        the algorithm terminated without breaking. We claim that in
        this case $\sn(G)\leq 2k$. To show this, let $W$ be an arbitrary
        subset of $[n]$ of size $k+2$. Consider the recursion tree $\cT$ given by
        the output of the algorithm, where each inner node of
        $\cT$ represents the separator output at this stage of
        the whole procedure. In particular, the root represents the
        output $S_1$ of {\tt Separator}$(\Sigma,k)$. The leaves of $\cT$
        represent the remaining subsets $L_1,\ldots,L_t$ each of size
        $\leq k$. We show that there exists a $(\tfrac23,2k)$-separator of $W$.
       
        Start with the root $S_1$ and recall that $|S_1|\leq k$ by construction. If $|C\cap W|\leq \tfrac{2}{3}|W|$ for all $C\in \cC^{(S_1)}$ then, by Lemma~26 in \cite{lugosi2021learning}, $S_1$ is a $(\tfrac23,k)$-separator of $W$ and we are done. Suppose then that there exists a component $C_1$ with $|C_1\cap W|> \tfrac{2}{3}|W|$. Denote $B_1=S_1\cup C_1$ and $A_1=V\setminus B_1$. We have $|B_1\cap W|> \tfrac{2}{3}|W|$, $|A_1\cap W|< \tfrac{1}{3}|W|$. Let $S_2$ be the separator found by the algorithm in $B_1$. This separates $B_1$ into components of $\overline G_{B_1}\setminus S_2$. Again, there is at most one component containing more than $\tfrac{2}{3}|W|$ elements of $|W|$. This process must stop at some point, that is, there is a $B_t=S_t\cup C_t$ with $t\geq 1$  containing more than  $\tfrac{2}{3}|W|$ elements of $|W|$ such that either $B_t$ is one of the leaves of $\cT$ (set $S_{t+1}=\emptyset$), or all the components of $\overline G_{B_t}\setminus S_{t+1}$ contain $\leq \tfrac{2}{3}|W|$ elements of $|W|$. Note that $A_t=V\setminus B_t$ contains $< \tfrac{1}{3}|W|$ elements of $W$. Moreover, by Lemma~\ref{lem:loc2globsep}, the set $S_t\cup S_{t+1}$ (of size $\leq 2k$) separates $A_t$ and all the components of $\overline G_{B_t}\setminus S_{t+1}$ from each other. In consequence, it induces the split of $W$ into $\tfrac23$-balanced sets.

 Now suppose that the algorithm broke after reaching a component $B=S\cup C$, that is, $|S|\leq k$ but it was impossible to find a small balanced separator in $\overline G_B$. We claim that $\sn(G)>\tfrac23 k$. Suppose on the other hand that $\sn(G)\leq \tfrac23 k$. Then, there exists a subset $S'\subseteq B$ of size $\leq \tfrac23 k$ which splits the induced graph $G_B$ into two parts $A',B'$ such that $A'\cup B'\cup S'=B$ and $\max\{|A'|,|B'|\}\leq \tfrac23|B|$. Since $S'$ cannot separate $A'$ from $B'$ in $\overline G_B$, we get that $A'\cap S\neq \emptyset$, $B'\cap S\neq \emptyset$. Without loss of generality     assume $|A'\cap S|\leq |B'\cap S|$ and so $|A'\cap S|\leq k/3$. The set $\tilde S=S'\cup (A'\cap S)$ satisfies $|\tilde S|\leq k$ and it splits $B$ in $G_B$ into $A'\setminus S$, $B'$ both of size $\leq \tfrac23 |B|$.
\end{proof}
	

Under a good run, one may use Algorithm~\ref{algomainmarg} and Theorem
\ref{th:main3} to estimate the separation number of a graph.
One can run the algorithm repeatedly with parameters $k=1,2,\ldots$
and $\epsilon/k^2$
until the the first time it terminates. Denoting this value by $k_0$,
Theorem \ref{th:main3} guarantees that $\sn(G)\le 2k_0$. Moreover,
since the algorithm breaks for $k_0-1$, we also know that $\sn(G) >
2(k_0-1)/3$.
Hence, we have a guarantee that $\sn(G)\in (2(k_0-1)/3, 2k_0]$.
With probability at least $1-\epsilon$, the total query complexity is $\cO(n\cdot\sn(G)\log(n)\max\{m,\sn(G)\Delta\})$.

	It is important to note that, in the special case when $G$ is decomposable,  Algorithm~\ref{algomainmarg} always gives a definite answer. In this case we recover the ideal situation that we encountered for trees. 

\begin{thm}\label{th:decomp}
	With the same assumptions as in Theorem~\ref{th:main3} assume in addition that $G$ is decomposable. Then Algorithm~\ref{algomainmarg} terminates if and only if  $\sn(G)
\leq \textcolor{blue}{k}$.\end{thm}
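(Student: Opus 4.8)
The plan is to exploit three structural features that decomposability forces on every recursive call, and then to sharpen the loose estimate of Theorem~\ref{th:main3} to the exact threshold $k$. First I would record the reductions. By \cite{dirac1961rigid} every minimal separator of a decomposable graph is a clique; since the sets $S$ produced by Algorithm~\ref{alg:sep0} are minimal separators, the closure of Definition~\ref{def:closure} adds no edges, so $\overline G_B=G_B$ at every node of the recursion tree. Hence Proposition~\ref{prop:inB} and Lemma~\ref{lem:decom} apply with honest induced subgraphs: assuming $\Sigma$ is $k$-strongly faithful and $\sn(G)\le k$, every component $B$ reached by Algorithm~\ref{algomainmarg} has $\Sigma_{B,B}$ being $k$-strongly faithful to $G_B$, and $G_B$ is again decomposable. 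I would also note that $\sn$ is monotone under induced subgraphs, $\sn(G_B)\le\sn(G)$, which is immediate from the definition since the witness $G_W$ of a set $W\subseteq B$ is the same computed inside $G$ or inside $G_B$. Finally, strong faithfulness forces a good run (each $\rk(\Sigma_{A,A'})$ equals the size of a minimal $A,A'$ separator), so Algorithm~\ref{alg:sep0} always returns a genuine minimal separator and every local separator is global by Lemma~\ref{lem:loc2globsep}.

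For the implication $\sn(G)\le k\Rightarrow$ termination, I would show the \texttt{BREAK} in Algorithm~\ref{alg:sep} never fires. Fix any component $B$ reached by the recursion. Since $\sn(G_B)\le\sn(G)\le k$ and the sample satisfies $|W|=m\ge k+2$, the set $W$ admits a balanced separation of order $\le k$ in $G_B$; because $\Sigma_{B,B}$ is strongly faithful, the rank minimiser searched in Algorithm~\ref{alg:sep} then detects a balanced two-partition $A/A'$ of $W$ with $\rk(\Sigma_{A,A'})\le k$ (this is exactly the detection mechanism underlying Theorem~\ref{thm:separator} and Lemma~26 of \cite{lugosi2021learning}). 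Thus the minimal rank is $\le k$ and the \texttt{BREAK} condition is never met. As each admissible separator splits $B$ into at least two flaps, every child $B_i=C_i\cup S$ is strictly smaller than $B$, so the recursion bottoms out at leaves of size $\le k$ where the algorithm \texttt{STOP}s; termination therefore holds deterministically, independently of the randomness in $W$.

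For the converse, termination $\Rightarrow\sn(G)\le k$, I would adapt the correctness argument of Theorem~\ref{th:main3}. For an arbitrary $W$ with $|W|=k+2$, descend the recursion tree along the unique child carrying more than $\tfrac23|W|$ of $W$ until reaching a node $B_t$ whose separator $S$ balances $W$ inside $G_{B_t}$. For a general graph this only produces a $(\tfrac23,2k)$-separator $S_{\mathrm{parent}}\cup S$. The role of decomposability is to eliminate the doubling: the parent separator $S_{\mathrm{parent}}\subseteq B_t$ is a clique, so (when disjoint from $S$) it survives inside a single flap of $G_{B_t}\setminus S$, whence the external part $A=V\setminus B_t$, carrying fewer than $\tfrac13|W|$ vertices of $W$, can attach to at most one component of $G\setminus S$. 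A careful accounting of the $W$-mass in that component, using that both $S$ and $S_{\mathrm{parent}}$ are cliques contained in $B_t$ and may therefore be partially merged while keeping total size $\le k$, then shows that a single separator of size $\le k$ already $\tfrac23$-balances $W$ in all of $G$, giving $\sn(G)\le k$ and completing the equivalence.

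The main obstacle is precisely this last accounting, that is, removing the factor of two left by Theorem~\ref{th:main3}. The delicate case is when the flap of $G_{B_t}\setminus S$ containing the parent clique already carries close to $\tfrac23|W|$ of $W$, so that naively merging it with the external $<\tfrac13|W|$ would overflow the $\tfrac23$ threshold. Handling it genuinely requires the clique structure of both $S$ and $S_{\mathrm{parent}}$ inside $B_t$ -- redistributing vertices between the two separators, or re-choosing which flap absorbs $A$, while never exceeding $k$ -- rather than merely the smallness of the separators. Should this refinement fail, one recovers only the bound $\sn(G)\le 2k$ of Theorem~\ref{th:main3}; securing the sharp threshold $k$ is the crux of the decomposable case.
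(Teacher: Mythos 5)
Your first two paragraphs reproduce, in essence, the paper's own argument for the forward direction: by Dirac's characterization and Lemma~\ref{lem:decom}, every component $B$ reached by Algorithm~\ref{algomainmarg} has $\Sigma_{B,B}$ $k$-strongly faithful to $G_B$ (and $\overline G_B=G_B$), and since $\sn$ is monotone under induced subgraphs, a balanced separator of size at most $k$ exists at every step, so the \texttt{BREAK} in Algorithm~\ref{alg:sep} never fires. That direction is fine.

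The converse is where you have a genuine gap, and you concede it yourself in your last paragraph. Your plan is to rerun the correctness argument of Theorem~\ref{th:main3} along the recursion tree and then ``remove the factor of two'' by redistributing vertices between the two clique separators $S_{\mathrm{parent}}$ and $S$; but this redistribution is never exhibited, and there is no reason it should exist: $S_{\mathrm{parent}}\cup S$ can genuinely contain $2k$ vertices, and no size-$k$ subset of it need balance $W$. The paper does not sharpen the recursion-tree argument at all; it proves the contrapositive by a purely structural fact about decomposable graphs. If $\sn(G)>k$, then $G$ contains a clique $K$ with $|K|>k$ (for chordal graphs the treewidth equals the clique number minus one, so $\sn(G)\leq \tw(G)+1=\omega(G)$, i.e., $\omega(G)\leq k$ would force $\sn(G)\leq k$). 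A clique can never be split by the recursion: whatever separator $S$ is removed, the vertices of $K\setminus S$ lie in a single component $C$ of $G_B\setminus S$, so $K\subseteq C\cup S=B'$ at the next level. Since every split under a good run produces strictly smaller components, the branch containing $K$ must end at a leaf; that leaf has size greater than $k$ because it contains $K$, so it cannot be a \texttt{STOP} leaf, hence it is a \texttt{BREAK}. This short argument is exactly what your proposal is missing; without it (or a completed version of your merging argument, which I do not believe can be completed in the form you describe), you only recover ``terminates $\Rightarrow \sn(G)\leq 2k$,'' which is Theorem~\ref{th:main3}, not the statement at hand.
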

	\begin{proof}
	By Lemma~\ref{lem:decom}, at each step $B=C\cup S$ of the algorithm, $\Sigma_{BB}$ is $k$-strongly faithful to $G_B$. If $\sn(G)\leq k$ then at each step we are able to find a small balanced separator and so the algorithm never breaks.    On the other hand, if $\sn(G)>k$ then $G$ contains a clique of size $>k$. This implies that the algorithm must break at some point. 
	\end{proof}

Our focus is on the query complexity but we can also bound the
computational complexity of the algorithm. The total running time 
is easily seen to be $$O(n\log(n)2^m\max(m,k\Delta)).$$ The extra
exponential factor is due to the fact that, at each step, the
algorithm performs an exhaustive search over all  partitions of the
set $W$ of size $m$. Since $m=O(k\log k)$, this factor does not affect
the dependence on $n$. However, as it is usual in graph algorithms,
the dependence on $k$ (i.e., the treewidth) is exponential.

\subsection{Conditional descent}\label{sec:cond}

Although the marginal descent is theoretically appealing, it may
happen that $G$ is such that Algorithm~\ref{algomainmarg} has a good run with small probability. In this case we propose an alternative.

Recall that $M(G)$, defined in
\eqref{eq:defMG}, denotes the set of all covariance matrices $\Sigma$
such that $(\Sigma^{-1})_{ij}=0$ if $i$ and $j$ are not connected by
an edge in $G$. We say that $\Sigma$ is Markov to $G$. The next lemma shows that $\Sigma^{(S)}_{C,C}$ is Markov with respect to $G_C$. 
\begin{lem}\label{lem:lugosi2021learning}
	If $S$ is a separator of $G$ and $C\in \cC^{(S)}$ then $\Sigma^{(S)}_{C,C}=((\Sigma^{-1})_{C,C})^{-1}\in M(G_C)$. Moreover, if $\Sigma$ is $\tau$-strongly faithful to $G$, with $\tau=|S|+k$, then $\Sigma^{(S)}_{C,C}$ is $k$-strongly faithful to $G_C$.\end{lem}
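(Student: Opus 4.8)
The plan is to handle the two assertions separately, since the displayed identity together with $\Sigma^{(S)}_{C,C}\in M(G_C)$ is pure linear algebra, whereas the faithfulness claim combines a rank formula with a graph-separation argument. For the identity, I would write $K=\Sigma^{-1}$ and recall that $\Sigma^{(S)}$ in \eqref{eq:SigmaS} is exactly the Schur complement of $\Sigma_{S,S}$ in $\Sigma$. By the block-inversion formula (Section~0.7.3 in \cite{horn2012matrix}), the Schur complement of the $S$-block equals the inverse of the complementary block of $K$, i.e. $\Sigma^{(S)}=(K_{\overline S,\overline S})^{-1}$ with $\overline S=V\setminus S$. The key observation is that $K_{\overline S,\overline S}$ is block diagonal along the partition of $\overline S$ into the components $\cC^{(S)}$: if $i\in C$ and $j\in\overline S$ lie in different components of $G\setminus S$, then $ij\notin E$, so $K_{ij}=0$ because $\Sigma\in M(G)$. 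A block-diagonal matrix has a block-diagonal inverse, so the $(C,C)$-block of $(K_{\overline S,\overline S})^{-1}$ is $(K_{C,C})^{-1}$, which gives $\Sigma^{(S)}_{C,C}=((\Sigma^{-1})_{C,C})^{-1}$.

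The membership $\Sigma^{(S)}_{C,C}\in M(G_C)$ is then immediate: its inverse is $K_{C,C}$, and for $i\neq j$ in $C$ with $ij\notin G_C$ we have $ij\notin E$ (as $G_C$ is induced), hence $(\Sigma^{(S)}_{C,C})^{-1}_{ij}=K_{ij}=0$, which is precisely the Markov condition defining $M(G_C)$.

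For the faithfulness claim, fix $A,A'\subseteq C$ and write $r=\rk(\Sigma^{(S)}_{A,A'})$, which equals $\rk((\Sigma^{(S)}_{C,C})_{A,A'})$ since $A,A'\subseteq C$. The first tool is the Guttman rank-additivity formula (see (14) in \cite{lugosi2021learning}): because $\Sigma_{S,S}$ is invertible and $\Sigma^{(S)}_{A,A'}$ is exactly the Schur-complement block inside $\Sigma_{A\cup S,A'\cup S}$, one gets $\rk(\Sigma_{A\cup S,A'\cup S})=|S|+r$. Assuming $r\leq k$, this rank is at most $|S|+k=\tau$, so $\tau$-strong faithfulness of $\Sigma$ yields $\rk(\Sigma_{A\cup S,A'\cup S})=s_G(A\cup S,A'\cup S)$, the size of a minimal separator in $G$. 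It then remains to prove the purely graph-theoretic identity
\begin{equation*}
s_G(A\cup S,A'\cup S)\;=\;|S|+s_{G_C}(A,A'),
\end{equation*}
where $s_{G_C}(A,A')$ is the size of a minimal separator of $A,A'$ in $G_C$; combining the two displays gives $r=s_{G_C}(A,A')$, which is exactly $k$-strong faithfulness of $\Sigma^{(S)}_{C,C}$.

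I expect this last identity to be the main obstacle, so I would prove both inequalities directly, the crucial structural fact being that $C$ is a component of $G\setminus S$, so every path leaving $C$ must pass through $S$. For ``$\leq$'', given a minimal separator $T'\subseteq C$ of $A,A'$ in $G_C$, the set $S\cup T'$ separates $A\cup S$ from $A'\cup S$ in $G$: a path from $A$ to $A'$ either stays inside $C$ and meets $T'$, or leaves $C$ and hence meets $S$, while a path with an endpoint in $S$ trivially meets $S$. For ``$\geq$'', I would use that, under the separation convention consistent with the rank characterization of \cite{sullivant2010trek}, a separator $T$ of $A\cup S,A'\cup S$ must contain their common part $S$; then, intersecting a within-$C$ path with $T$ shows $T\cap C$ separates $A,A'$ in $G_C$, and since $C\cap S=\emptyset$ we obtain $|T\cap C|\leq|T|-|S|$. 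The delicate points are exactly this ``$S\subseteq T$'' step and the matching of separation conventions between the graph and the rank formula; once these are pinned down, the two inequalities close the argument.
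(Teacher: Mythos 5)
Your proof is correct and takes essentially the same route as the paper: for the faithfulness claim, the paper likewise applies the Guttman rank-additivity formula to get $\rk(\Sigma_{A\cup S,A'\cup S})=|S|+r$, invokes $\tau$-strong faithfulness, and then shows the minimal separator of $A\cup S$ and $A'\cup S$ in $G$ must contain $S$ and hence has the form $S\cup S'$ with $S'$ a minimal separator of $A,A'$ in $G_C$ --- which is precisely your two-inequality identity $s_G(A\cup S,A'\cup S)=|S|+s_{G_C}(A,A')$, including the ``$S\subseteq T$'' step that you correctly flag and justify. The only difference is that for the first, linear-algebraic part the paper simply cites Lemma~29 of \cite{lugosi2021learning} as well known, whereas you prove it directly via the Schur-complement/block-inversion argument; your proof of that part is also correct.
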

\begin{proof}
	The first part is well-known; see, e.g.,  Lemma 29, \cite{lugosi2021learning}. For the second part, by the Guttman rank additivity formula, 
	$$
	\rk(\Sigma_{A\cup S,A'\cup S})=r+|S|
	$$
	and so, since $\Sigma$ is $\tau$-strongly faithful to $G$, the smallest separator of $A\cup S$ and $A'\cup S$ in $G$ has size $r+|S|$. Note that this separator must contain $S$ and so it is of the form $S\cup S'$ for some $S'$ of size $r$, $S\cap S'=\emptyset$. We now show that $S'$ separates $A$ and $A'$ in $G_C$. Indeed, every path in $G$ between $A$ and $A'$ must cross $S\cup S'$ and so every path between $A$ and $A'$ in $G_C$ must cross $S'$. We also have that $S'$ is minimal with such property for otherwise $S\cup S'$ would not be a minimal separator of $A\cup S$ and $A'\cup S$. In particular, $S'\subseteq C$. \end{proof}
\begin{rem}\label{rem:whynotM2}
	 A similar argument shows that we could get access to the desirable subgraph $G_B$  by computing $\Sigma_{B,B}^{(V\setminus B)}$. Computing this matrix is, however, beyond our query budget.
\end{rem}

One important difference in the conditional descent compared to the marginal descent is that, in each descent, we need to keep
      track of all separators in the previous steps. Suppose
      $S_1,\ldots,S_t$ are all these separating sets and
      let $$\overline S\;=\;\bigcup_{i=1}^t S_i.$$ To check separation
      of $i,j$ by $S_t$ in the current step, we actually need to check
      if $\Sigma^{(\overline S)}_{ij}=0$. Nevertheless, we still have a
      tight control over the query complexity.

\begin{algorithm}
Input: an oracle for $\Sigma$, $k\geq 1$ (separation number to test)\;
Let $V$ be the index set of rows of $\Sigma$\;
\If{$|V|\leq k$}{{\tt STOP}}
\Else{run \texttt{Separator}$(\Sigma,k)$ to get a balanced separator $S$, s.t. $|S|\leq k$ and \texttt{Components}$(\Sigma^{(S)},V\setminus S)$ to get the corresponding components $C_1,\ldots,C_\ell$\;
\For{$i=1,\ldots,\ell$}{
run ${\tt test.conditional}(\Sigma_{C_i,C_i}^{(S)},k)$}}
\label{algomaincond}
\caption{${\tt test.conditional}(\Sigma,k)$}
\end{algorithm}

Algorithm~\ref{algomaincond} offers a procedure to test the separation number for a general graph $G$. It is analogous to Algorithm~\ref{algomaincond} with the only difference in how the algorithm descends to smaller components. To carry out a formal analysis of Algorithm~\ref{algomaincond},
note that querying the entries of
$\Sigma^{(S)}_{C,C}=\Sigma_{C,C}-\Sigma_{C,S}\Sigma_{S,S}^{-1}\Sigma_{S,C}$
for $S\neq \emptyset$ has some initial cost $|S|^2$ needed to get
$\Sigma_{S,S}$ and for each entry $\Sigma^{(S)}_{ij}$ the cost is
$1+2|S|=\cO(|S|)$ to query $\Sigma_{ij}$ and the vectors
$\Sigma_{i,S}$, $\Sigma_{j,S}$.
\begin{rem}\label{rem:comp}
	The query complexity of ${\tt Components}(\Sigma^{(\overline S)}_{V,V},V)$ is $\cO(|V|\min\{\Delta,|V|\})$ if $\overline S=\emptyset$ and $\cO(|\overline S||V|\min\{\Delta,|V|\})$ otherwise.
\end{rem}

\begin{lem}\label{prop:runsep}
	Suppose $m$ satisfies (\ref{eq:boundW}) with $\delta=\tfrac{7}{30}$ and $\delta'=\tfrac{\epsilon}{10 n \log(n)}$. Running {\tt Separator}$(\Sigma^{(\overline S)}_{V,V},k)$ in Algorithm~\ref{alg:sep} takes query complexity $\cO(|V|m )$ if $\overline S=\emptyset$ and $\cO(|\overline S|(|\overline S|+|V|m))$   otherwise. 
\end{lem}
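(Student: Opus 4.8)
The plan is to mirror the complexity count of the marginal case—where running {\tt Separator}$(\Sigma_{V,V},k)$ costs $\cO(|V|m)$—while tracking the extra overhead that now every entry of $\Sigma^{(\overline S)}$ is itself expensive to query. I would first fix the cost model recalled just before Remark~\ref{rem:comp}: assembling $\Sigma^{(\overline S)}_{V,V}=\Sigma_{V,V}-\Sigma_{V,\overline S}\Sigma_{\overline S,\overline S}^{-1}\Sigma_{\overline S,V}$ carries a one-time cost of $\cO(|\overline S|^2)$ for the block $\Sigma_{\overline S,\overline S}$, after which each single entry $\Sigma^{(\overline S)}_{ij}$ costs $\cO(|\overline S|)$ queries of $\Sigma$ (namely $\Sigma_{ij}$ together with the vectors $\Sigma_{i,\overline S},\Sigma_{j,\overline S}$). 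The rest is to count, phase by phase, how many distinct entries of $\Sigma^{(\overline S)}$ Algorithm~\ref{alg:sep} ever needs, and then multiply by this per-entry cost.

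Concretely, the exhaustive search over balanced partitions of $W$ computes $\rk(\Sigma^{(\overline S)}_{A,A'})$ and hence needs only the $\cO(m^2)$ entries of $\Sigma^{(\overline S)}_{W,W}$; the first \textbf{forall} loop of {\tt ABSeparator} (Algorithm~\ref{alg:sep0}) augments this block, for each $v\in V$, by the $\cO(m)$ entries pairing $v$ with $A\cup A'\subseteq W$, giving $\cO(|V|m)$ entries in total; and the second \textbf{forall} loop touches a further $\cO(k^2)$ entries. The number of distinct entries of $\Sigma^{(\overline S)}$ ever queried is therefore $\cO(m^2+|V|m+k^2)=\cO(|V|m)$, using $k\leq m\leq|V|$.

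Multiplying this entry count by the per-entry cost $\cO(|\overline S|)$ and adding the one-time $\cO(|\overline S|^2)$ yields total query complexity $\cO(|\overline S|^2+|\overline S|\,|V|m)=\cO(|\overline S|(|\overline S|+|V|m))$, as claimed for $\overline S\neq\emptyset$. In the degenerate case $\overline S=\emptyset$ there is no Schur complement: $\Sigma^{(\overline S)}=\Sigma$, every entry costs $\cO(1)$, and the same count collapses to $\cO(|V|m)$, recovering exactly the marginal statement.

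The only delicate point—the main, if minor, obstacle—is confirming that the auxiliary $m^2$ and $k^2$ contributions are genuinely dominated by $|V|m$, so that they may be absorbed without inflating the stated order; this rests on $k\leq m$ (from the sampling bound \eqref{eq:boundW}) and $m\leq|V|$ (since $W$ is sampled from $V$). Everything else is bookkeeping identical in spirit to the marginal analysis.
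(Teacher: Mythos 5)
Your proposal is correct and follows essentially the same route as the paper's proof: a one-time $\cO(|\overline S|^2)$ cost for $\Sigma_{\overline S,\overline S}$, a per-entry cost of $\cO(|\overline S|)$ for $\Sigma^{(\overline S)}$, and then the same entry counts ($\cO(m^2)$ for the exhaustive search over partitions of $W$, $\cO(|V|m)$ for the first \textbf{forall} loop of {\tt ABSeparator}, $\cO(k^2)$ for the second, absorbed since $k\leq m\leq |V|$). The only cosmetic difference is that you separate the argument into ``count distinct entries, then multiply by per-entry cost,'' whereas the paper interleaves these, which changes nothing of substance.
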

\begin{proof}
	Finding a balanced partition $A,A'$ of $W\subseteq V$ by running {\tt Separator}$(\Sigma^{(\overline S)}_{V,V},k)$ is achieved by exhaustively searching all $<2^m$ balanced partitions of $W$ and computing the rank of the associated matrixes $\Sigma_{A,A'}^{(\overline S)}$, which gives query complexity $O(|\overline S|^2+|\overline S|m^2)$. Here $|\overline S|^2$ is the price we pay for querying $\Sigma_{\overline S,\overline S}$ and then each entry of $\Sigma^{(\overline S)}$ costs us $\cO(|\overline S|)$ and we need to query $\cO(m^2)$ such entries, that is, the entries of $\Sigma^{(\overline S)}_{W,W}$.  Then running {\tt ABSeparator}$(\Sigma^{(\overline S)}_{V,V},A,A')$ in Algorithm~\ref{alg:sep0} takes $|\overline S|m |V|$  for the first \textbf{forall} loop. In the second \textbf{forall} loop we need to query $\cO(k^2)$ new entries but this will not affect the total complexity because both $|V|$ and $m$ are larger than $k$. The total cost is $\cO(|\overline S|^2+|V||\overline S|m)$.
\end{proof}

The next theorem establishes a bound for the query complexity of the whole procedure and it shows that if $\sn(G)\leq k$ then the algorithm always terminates without breaking. 

\begin{thm}\label{th:main2}
  Let $G$ be a connected graph and let $\Sigma\in M(G)$ be
  strongly faithful. Fix $\epsilon<1$ and define $m$ to be minimal satisfying
  \eqref{eq:boundW} with $\delta=\tfrac{7}{30}$ and
  $\delta'=\tfrac{\epsilon}{10 n \log(n)}$. 
{
Then, if $\sn(G)\leq k$, Algorithm~\ref{algomaincond} will never break. 
Moreover, with probability at
  least $1-\epsilon$, it runs with total
  query complexity
$$
\cO(n\log^2(n)k\max\{m,\Delta\})~
$$
and if it terminates without breaking, we conclude $\sn(G)\leq 10 k \log(\tfrac{n}{k})$.
}
\end{thm}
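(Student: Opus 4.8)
The plan is to follow the three-part structure of the proof of Theorem~\ref{th:main3} (query complexity, high-probability bound, and correctness), while exploiting the one feature that distinguishes conditional descent from marginal descent: it operates on the \emph{exact} induced subgraphs rather than on their closures. The key preliminary observation is that the matrix passed to any recursive call is $\Sigma^{(\overline S)}_{C,C}$, where $\overline S=\bigcup_i S_i$ is the union of all separators on the root-to-node path (as in the remark preceding Algorithm~\ref{algomaincond}) and $C$ is a connected component of $G\setminus \overline S$. Using the tower property of Schur complements, $(\Sigma^{(S_1)})^{(S_2)}=\Sigma^{(S_1\cup S_2)}$, together with an iterated application of Lemma~\ref{lem:lugosi2021learning} and the full strong faithfulness of $\Sigma$, I would first record that $\Sigma^{(\overline S)}_{C,C}$ is strongly faithful to the \emph{true} induced subgraph $G_C$. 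Combined with the monotonicity $\sn(G_C)\le \sn(G)$ of the separation number under induced subgraphs, this single structural fact drives all three parts.

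For the claim that the algorithm never breaks when $\sn(G)\le k$: a break occurs only inside \texttt{Separator} (Algorithm~\ref{alg:sep}), when the minimal rank over balanced bipartitions of the sample $W$ exceeds $k$. Since $\Sigma^{(\overline S)}_{C,C}$ is strongly faithful to the exact $G_C$ and $\sn(G_C)\le \sn(G)\le k$, the set $W$ admits a balanced separator of size $\le k$, so by strong faithfulness $\min_{A/A'}\rk(\Sigma^{(\overline S)}_{A,A'})\le k$ and \texttt{Separator} returns a genuine $(\tfrac23,k)$-separator rather than \texttt{BREAK}-ing (the correspondence between balanced separators of $W$ and small ranks being the one used via Lemma~26 of \cite{lugosi2021learning} in the proof of Theorem~\ref{th:main3}). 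I would emphasize the contrast with marginal descent: there the reference graph is the closure $\overline G_B$, whose separation number may exceed $k$ because of the spurious edges added in Definition~\ref{def:closure}, which is exactly why Theorem~\ref{th:main3} loses the factors $\tfrac23 k$ and $2k$ whereas conditional descent gives a factor-free guarantee.

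The query-complexity bound follows the template of Theorem~\ref{th:main3}, the new ingredient being the conditioning overhead: each entry of $\Sigma^{(\overline S)}$ costs $\cO(|\overline S|)$ queries to $\Sigma$, and at depth $\ell$ one has $|\overline S|\le \ell k$. Using the conditional form of Lemma~\ref{prop:runsep} and Remark~\ref{rem:comp}, the total cost of all \texttt{Separator} and \texttt{Components} calls on one level sums to $\cO(|\overline S|\,n\max\{m,\Delta\})=\cO(k\log(n)\,n\max\{m,\Delta\})$, the lower-order $|\overline S|^2$ bookkeeping terms being dominated because $m=\Omega(k)$ by \eqref{eq:boundW}. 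With $\ell^*=\cO(\log(n/k))$ levels this yields the claimed $\cO(n\log^2(n)\,k\max\{m,\Delta\})$. The high-probability statement is obtained exactly as before: the event $\cE_{\ell^*}$ that every level shrinks components by a factor $\tfrac{9}{10}$ fails with probability at most $n\ell^*\delta'\le \epsilon$, by Theorem~\ref{thm:separator} (with $\delta=\tfrac{7}{30}$, $\delta'=\tfrac{\epsilon}{10n\log n}$) and a union bound over the $\le n\ell^*$ separator-finding events.

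Finally, for soundness, I would argue on the good event $\cE_{\ell^*}$ (so the realized recursion tree has depth $\le\ell^*\le 10\log(n/k)$) that every $W$ has a balanced separator of size $\le\ell^* k$. Fix $W$ and trace the recursion tree, at each node following the unique child whose component carries more than $\tfrac23|W|$ of $W$, stopping when that component becomes light or is a leaf. The crucial local-to-global step is that a separator $S$ found in a component $C$ via $\Sigma^{(\overline S)}_{C,C}$ separates the resulting subcomponents only within $G_C$; any $G$-path leaving $C$ must cross $\overline S$, so it is the \emph{accumulated} separator $\overline S\cup S$ that separates the subcomponents in $G$. Hence at the stopping node the accumulated separator, of size $\le\ell^* k\le 10k\log(n/k)$, is a $(\tfrac23,\cdot)$-separator of $W$ in $G$; since $W$ was arbitrary, $\sn(G)\le 10k\log(n/k)$. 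This accumulation across depth — forced here because conditional separators are only local, whereas in the marginal case Lemma~\ref{lem:loc2globsep} promotes a single separator to a global one — is precisely the origin of the extra $\log(n/k)$ factor, and I expect making this local-to-global accounting rigorous (together with checking that the conditional matrices stay strongly faithful to the exact subgraphs under composition) to be the main obstacle of the proof.
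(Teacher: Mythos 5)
Your proposal is correct and follows essentially the same route as the paper's proof: the iterated use of Lemma~\ref{lem:lugosi2021learning} (via Schur-complement composition) to keep $\Sigma^{(\overline S)}_{C,C}$ strongly faithful to the exact $G_C$, the level-by-level query accounting with the $\cO(|\overline S|)$-per-entry conditioning overhead summed over $\ell^*=\cO(\log(n/k))$ levels, the union bound via Theorem~\ref{thm:separator}, and the heavy-path/accumulated-separator argument showing $\overline S=\bigcup_l S_l$ globally separates the final components, giving $\sn(G)\leq 10k\log(n/k)$. If anything, you are slightly more explicit than the paper in noting that the soundness conclusion rests on the good event $\cE_{\ell^*}$ (which bounds the recursion depth), a point the paper leaves implicit when it invokes \eqref{eq:ellstar}.
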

\begin{proof}The proof is split into three parts. We first prove the
  bound on the total query complexity. Then we prove that the claimed bound for the query complexity holds with high
  probability. Finally, we show that
  when our algorithm terminates, we are guaranteed that 
  $\sn(G)\leq 10 k \log(\tfrac{n}{k})$. Note that, by Lemma~\ref{lem:lugosi2021learning}, in each step $\Sigma^{(S)}_{CC}$ is strongly faithful to $G_C$ so the procedure {\tt SeparatorAB} correctly outputs a $(\tfrac23,k)$-separator of $C$ if only such separator exists.

\textbf{Query complexity bound:}
Consider the first step of our procedure assuming $|V|=n>k$ and $\overline S=\emptyset$. By Lemma~\ref{prop:runsep}, {\ttfamily Separator}$(\Sigma,k)$ takes query complexity $\cO(nm)$. If the procedure does not break then a $(\tfrac23,k)$-separator $S_1$ of $W$ is found. We set $\overline S=S_1$ and run  {\ttfamily Components}$(\Sigma^{(\overline S)},V\setminus \overline S)$ which, by Remark~\ref{rem:comp}, has query complexity $\cO(|\overline S|n \Delta)=\cO(kn\Delta)$. Both these steps take query complexity $\cO(n(k\Delta+m))$. 

In the second level of the recursion tree we need to take into account the cost for each of the connected components of $G\setminus \overline S$. By Lemma~\ref{prop:runsep}, the cost of running {\ttfamily Separator}$(\Sigma^{(\overline S)}_{CC},k)$ on a single component $C$ is  $\cO(|\overline S|^2+|C|m|\overline S|)=\cO(|C|mk)$. Thus, the total query complexity of running this on all connected components is $\cO(nmk)$. Let $S$ be the separator found in the component $C$. The cost of running {\ttfamily Components}$(\Sigma^{(\overline S\cup S)}_{C,C},C\setminus S)$ in this component is $\cO(k|C|\Delta)$, which is additive across the components. Overall, the total cost in the second level is $\cO(k n\max\{m,\Delta\})$.

Now consider an arbitrary $\ell$-th level. In a given component we work with a conditioning set $\overline S=S_1\cup \cdots\cup S_{\ell-1}$. Note that, to query $\Sigma_{C,C}^{\overline S}$, we need only $\cO(k|\overline S|)$ additional entries to query the matrix $\Sigma_{\overline S,\overline S}$ because the other entries were queried in the preceding steps. By similar arguments as above, in the $\ell$-th level of the recursion tree we are guaranteed that the corresponding sets $\overline S$ on which we have to condition have size $\cO(k\ell)$ and the total query complexity is 
$$\cO(\ell k n \max\{m,\Delta\})\;=\;\mbox{query complexity in the $\ell$-th level }.$$

	To get the total query complexity, it remains to control the number of times this procedure is executed on subsequent sub-components. Referring to the underlying algorithm tree (like in Figure~\ref{fig:levels}), this corresponds to the number of levels in this tree and the number of components in each level. 	
	
	Let $\cE_\ell$ be the event that all components in the $\ell$-th level of the recursion tree have size bounded by $(\tfrac{2}{3}+\delta)^\ell n=(\tfrac{9}{10})^\ell n$. Let 
	\begin{equation}\label{eq:ellstar}
		\ell^*\;:=\;\left\lceil \frac{\log\left(\frac{n}{k}\right)}{\log\left(\frac{10}{9}\right)}\right\rceil\;\leq\;10\log\left(\frac{n}{k}\right).
	\end{equation}
	Under $\cE_{\ell^*}$, each component in the $\ell^*$-th level of the tree has size bounded by $k$. After this, one more run of the algorithm will stop.  Summing over all $\ell^*+1$ levels, the total query complexity is then $\cO({\ell^*}^2kn\max\{m,\Delta\})$, which using \eqref{eq:ellstar} gives the claimed complexity.

\textbf{Bound on the probability of getting the claimed query complexity:}	
	We now show that the event $\cE_{\ell^*}$ holds with probability at least $1-\epsilon$. The probability of the complement of $\cE_{\ell^*}$ can be bounded by the probability that in at least one instance the separator $S$  output by Algorithm~\ref{alg:sep0} is not a $(\tfrac{9}{10},k)$-separator of the whole component. By Theorem~\ref{thm:separator} (with $\delta=\tfrac{7}{30}$ and $\delta'=\tfrac{\epsilon}{10 n \log(n)}$) this happens with probability $\leq \delta'$. The number of components on which we run this algorithm is bounded by $n\ell^*$ and so, by the union bound, this probability can be bounded by 
	$$
	n \ell^* \delta'\;\leq\;n 10 \log\left(\frac{n}{k}\right)\frac{\epsilon}{10 n \log(n)}\;\leq\;\epsilon.
	$$ 	

	\textbf{Correctness of the algorithm:} At each step $\Sigma^{(S)}_{CC}$ is strongly faithful to $G_C$. If the algorithm breaks
        at any stage or one of the direct checks gives a negative
        answer, then we are guaranteed that $\sn(G)>k$. Suppose that
        the algorithm terminated without breaking. We claim that in
        this case $\sn(G)\leq 10 k\log (\tfrac{n}{k})$. To show it, let $W$ be an arbitrary
        subset of $[n]$. Consider the recursion tree $\cT$ given by
        the output of our algorithm. Think about each inner node of
        $\cT$ as representing the separator output at this stage of
        the whole procedure. In particular, the root represents the
        output $S_1$ of {\tt Separator}$(\Sigma,k)$. The leaves of $\cT$
        represent the remaining subsets $L_1,\ldots,L_t$ each of size
        $\leq k$. We show that there exists a balanced separator $\overline S$ of size at most $10k\log(\tfrac{n}{k})$.
        
        Start with the root $S_1$ and recall that $|S_1|\leq k$ by construction. If $|C\cap W|\leq \tfrac{2}{3}|W|$ for all $C\in \cC^{(S_1)}$ then $S_1$ is a $(\tfrac23,k)$-separator of $W$ and we are done. Suppose then that there exists a component $C_1$ with $|C_1\cap W|> \tfrac{2}{3}|W|$. Denote  $A_1=V\setminus C_1$. We have $|A_1\cap W|< \tfrac{1}{3}|W|$. Let $S_2$ be the separator found by the algorithm in $C_1$. This separates $C_1$ into components of $G_{C_1}\setminus S_2$. Again, there is at most one component containing more than $\tfrac{2}{3}|W|$ elements of $|W|$. This process must stop at some point, that is, there is a $C_t$ with $t\geq 1$  containing more than  $\tfrac{2}{3}|W|$ elements of $|W|$ such that either $C_t$ is one of the leaves of $\cT$ (set $S_{t+1}=\emptyset$), or all the components of $ G_{C_t}\setminus S_{t+1}$ contain $\leq \tfrac{2}{3}|W|$ elements of $|W|$. Note that $A_t=V\setminus C_t$ contains $< \tfrac{1}{3}|W|$ elements of $W$. Let $\overline S=\bigcup_{l=1}^{t+1}S_l$. By construction, $S_{t+1}$ separates the components of $G_{C_t}\setminus S_{t+1}$ in $C_t$. Then $S_t\cup S_{t+1}$ separates these components in $C_{t-1}$. Proceeding recursively, we get that $\overline S$ separates all these components in the graph $G$. The fact that $|\overline S|\leq 10 k \log (\tfrac{n}{k})$ follows by \eqref{eq:ellstar}.
\end{proof}

\section{Non-gaussian case}\label{sec:ngtrees}
 
Although our paper focuses on the case when the underlying random vector is Gaussian, our results can be significantly extended. First, it is important to emphasize that zeros in the inverse of $\Sigma$ have a clear statistical interpretation even if $X$ is not Gaussian. Denote by $\rho^{ij}$ the corresponding \emph{partial} correlation between $X_i$ and $X_j$ given $X_{\setminus ij}:=X_{V\setminus \{i,j\}}$. Formally $\rho^{ij}$ is defined as the correlation of residuals $\epsilon_i$ and $\epsilon_j$ resulting from linear regression of $X_i$ on $X_{\setminus ij}$ and $X_j$ on $X_{\setminus ij}$ respectively. We get (see Section~5.1.3 in \cite{lau96})
$$
\rho^{ij}\;=\;-\frac{(\Sigma^{-1})_{ij}}{\sqrt{(\Sigma^{-1})_{ii}(\Sigma^{-1})_{jj}}}\qquad\mbox{and so}\qquad (\Sigma^{-1})_{ij}=0\quad\Longleftrightarrow\quad  \rho^{ij}=0.
$$
It may be argued that this linear conditional independence may not be interesting in general but in various specific scenarios, e.g., for elliptical distributions \cite{rossell2021dependence}, we get the following result: Suppose that $(\Sigma^{-1})_{ij}=0$ then $\E(X_i|X_{\setminus i})=\E(X_i|X_{\setminus ij})$ and $\E(X_j|X_{\setminus j})=\E(X_j|X_{\setminus ij})$. This, so called mean independence, is only slightly weaker than the classical notion of conditional independence.  

The Gaussian graphical model construction is also easily extended to
so-called nonparanormal distributions \cite{liu2012high}. In this case
the same procedures allow us to test the underlying graphical model in
a much more general distributional setting. We say that $X$ has
nonparanormal distribution if there exist monotone functions
$f_i:\R\to \R$ such that $X=(f_1(Z_1),\ldots,f_n(Z_n))$, where
$Z=(Z_1,\ldots,Z_n)$ is multivarite Gaussian. For a random vector $X$,
define the Kendall-$\tau$ coefficients $$
\tau(X_i,X_j)\;=\;\E(\indic(X_i>X_i')\indic(X_j>X_j')),
$$
where $X'$ is an independent copy of $X$. We use the classical result of \cite{kruskal1958ordinal} that links the Kendall-$\tau$ coefficient $\tau(Z_i,Z_j)$ with the correlation $\Sigma_{ij}={\rm cor}(Z_i,Z_j)$ for Gaussian distributions:
$$
\Sigma_{ij}\;=\;\sin(\tfrac{\pi}{2}\tau(Z_i,Z_j)).
$$
As observed in \cite{liu2012high}, Kendall's $\tau$ coefficients are the same for	the vector $X$ and for its Gaussian counterpart $Z$ and so we get a simple way of computing the correlation matrix of $Z$ without observing this vector and without knowing the $f_i$'s.

In the special case of tree models, we can extend our distributional
setup arbitrarily. Here however, the test may not be based on querying
the correlation matrix but on potentially more complicated conditional
independence queries. Suppose that a distribution on $\cX\subseteq \R^n$ is given and is $1$-faithful to the graph $G$ in the sense that
\begin{itemize}
	\item [(i)] $X_i\indep X_j$ if and only if $i$ and $j$ lie in two different components of $G$. 
	\item [(ii)] $X_i\indep X_j|X_k$ if and only if $i,j$ lie in two different components of $G\setminus \{k\}$.
\end{itemize}
Assuming that $G$ is connected, our procedure is identical to the one proposed above with the only modification that to decide if $i,j$ lies in the same component of $G\setminus \{k\}$ we use the conditional independence queries. This is also how  we measure the query complexity in this case:
\begin{enumerate}
	\item If $|V|\leq m$, check treeness directly. Otherwise, sample $m$ nodes $W\subseteq V$ as in Section~\ref{sec:central}.
	\item For each $v\in V$ decompose $W$ into $C\cap W$ for $C\in \cC^{(v)}$. To decide if $i,j\in V$ lies in the same block, simply test $X_i\indep X_j|X_v$. 
	\item Take $v^*$ to be the optimal vertex obtained by this procedure. If $M( v^*)>n/2$ then break. Otherwise descend into the components $B=C\cup \{v^*\}$ for $C\in \cC^{(v^*)}$ and go back to Step 1.
\end{enumerate}

Finally, there are non-Gaussian tree models when $\Sigma$ has the same
algebraic structure as in the Gaussian case. This is true for binary
distributions on a tree (i.e., binary Ising models) and more generally for so called linear tree models; see Section~2.3 in \cite{zwiernik2018latent} for definitions. In all these cases, we can apply our tree testing procedure exactly in the same way as presented in the paper.

\section*{Acknowledgements}

We would like to thank Vida Dujmovi\'{c} for helpful remarks.

\appendix

\bibliography{references.bib}
\bibliographystyle{authordate1}

\end{document}